\documentclass[12pt]{amsart}

\DeclareMathOperator{\Gal}{Gal}%

\DeclareMathOperator{\res}{res}
\DeclareMathOperator{\cor}{cor}

\numberwithin{equation}{section}
\newtheorem{theorem}[equation]{Theorem}
\newtheorem{corollary}[equation]{Corollary}
\newtheorem{proposition}[equation]{Proposition}
\newtheorem{lemma}[equation]{Lemma}
\newtheorem{remark}[equation]{Remark}

\newtheorem*{definition*}{Definition}
\newtheorem*{remark*}{Remark}

\newcommand{\F}{\mathbb{F}}

\newcommand{\N}{\mathbb{N}}

\newcommand{\Z}{\mathbb{Z}}

\newcommand{\im}{\text{\rm{im}}}
\newcommand{\comment}[1]{}

\usepackage{amssymb}
\usepackage{xypic}

\begin{document}

\title[Galois Cohomology for Embeddable Cyclic Extensions]{Galois
Module Structure of Galois Cohomology for Embeddable Cyclic Extensions of
Degree $p^n$}

\author[N.~Lemire]{Nicole Lemire}
\address{Department of Mathematics, Middlesex College, \ University
of Western Ontario, London, Ontario \ N6A 5B7 \ CANADA}
\email{nlemire@uwo.ca}

\author[J.~Min\'{a}\v{c}]{J\'an Min\'a\v{c}}
\address{Department of Mathematics, Middlesex College, \ University
of Western Ontario, London, Ontario \ N6A 5B7 \ CANADA}
\email{minac@uwo.ca}

\author[A.~Schultz]{Andrew Schultz}
\address{Department of Mathematics, University of Illinois at Urbana-Champaign, 1409 W. Green Street, Urbana, IL \ 61801 \ USA}
\email{acs@math.uiuc.edu}

\author[J.~Swallow]{John Swallow}
\address{Department of Mathematics, Davidson College, Box 7046,
Davidson, North Carolina \ 28035-7046 \ USA}
\email{joswallow@davidson.edu}

\begin{abstract}
Let $p>2$ be prime, and let $n,m \in \N$ be given.  For cyclic extensions $E/F$ of degree $p^n$ that contain a primitive $p$th root of unity, we show that the associated $\F_p[\Gal(E/F)]$-modules $H^m(G_E,\mu_p)$ have a sparse decomposition.  When $E/F$ is additionally a subextension of a cyclic, degree $p^{n+1}$ extension $E'/F$, we give a more refined $\F_p[\Gal(E/F)]$-decomposition of $H^m(G_E,\mu_p)$.  
\end{abstract}

\date{\today}




\maketitle

\parskip=12pt plus 2pt minus 2pt

\section{Introduction}

Absolute Galois groups capture a great deal of the arithmetic and algebraic properties of their underlying fields, though they are notoriously intractable to compute.  For a given field $E$, one must often be satisfied with studying invariants attached to the corresponding absolute Galois group $G_E$, and in this respect the Galois cohomology groups $H^i(G_E,A)$ for various $G_E$-modules $A$ are frequent subjects of investigation.  Of particular interest are the groups $H^m(G_E,\mu_p)$ for a fixed prime $p$, where $\mu_p$ represents the group of $p$th roots of unity in $G_E$.

When $E$ is itself a Galois extension of a field $F$, the action of $\Gal(E/F)$ on $E^\times$ induces a natural action on $H^m(G_E,\mu_p)$.  Combined with the $\F_p$-action on these cohomology groups, this naturally leads one to study these Galois cohomology groups as $\F_p[\Gal(E/F)]$-modules.  In particular, one expects that this Galois module structure will provide insight into the corresponding absolute Galois group $G_E$.

This program has been carried out in several cases where $\Gal(E/F) \simeq \Z/p^n\Z$ and $E$ contains a primitive $p$th root of unity $\xi_p$.  In particular, the case $n=m=1$ was resolved in \cite{MS2}, $m=1$ and $n\geq 1$ (without the restriction $\xi_p \in E$) in \cite{MSS1}, and $m\geq 1$ and $n=1$ in \cite{LMS3}.  As desired, these computed module structures have already led to some interesting results on the structure of absolute Galois groups: automatic realization results in \cite{MS3,MSS2}, a generalization of Schreier's Theorem in \cite{LLMS3}, a connection with Demu\v{s}kin groups in \cite{LLMS2}, an interpretation of cohomological dimension in \cite{LMS}, and a characterization of certain groups which cannot appear as absolute Galois groups in \cite{BLMS}.

The goal of this paper is to begin the investigation of a unified understanding of the structures already computed by determining some important results in the case $m\geq 1$ and $n \geq 1$.  We shall focus on the case $p>2$ in this paper.  In much the same way that this problem is the unification of the problems considered in \cite{LMS3} and \cite{MSS1}, so too will the methodology in our solution be a combination of their individual strategies.  Indeed, careful refinements of the arguments from \cite{LMS3}, together with the appropriate module-theoretic results, will already be enough to give us the following

\begin{theorem}\label{th:coarse.decomposition}
Let $p>2$ be a given prime.  If $\Gal(E/F) \simeq \Z/p^n\Z$ and $\xi_p \in E$, then the $\F_p[\Gal(E/F)]$-module $H^m(G_E,\mu_p)$ is a direct sum of indecomposable summands which are either of dimension $p^n$ or of dimension at most $2p^{n-1}$.
\end{theorem}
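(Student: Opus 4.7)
Let $G = \Gal(E/F) = \langle \sigma\rangle$ and set $M = H^m(G_E,\mu_p)$. Because $\F_p[G]$ is a local Frobenius algebra whose indecomposable modules are exactly $V_d := \F_p[\sigma]/((\sigma-1)^d)$ for $1 \leq d \leq p^n$, the theorem is equivalent to ruling out $V_d$ as a summand of $M$ for $2p^{n-1} < d < p^n$. By the Rost--Voevodsky theorem we identify $M \cong K_m^M(E)/p$ with the $G$-action inherited from $E^\times$. The central strategy, generalizing that of \cite{LMS3}, is to produce a submodule $N \subseteq M$ with two properties: (a) the quotient $M/N$ is a free $\F_p[G]$-module, and (b) every indecomposable summand of $N$ has $\F_p$-dimension at most $2p^{n-1}$. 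Granting (a) and (b), the proof closes cleanly: since $\F_p[G]$ is self-injective, $V_{p^n} \cong \F_p[G]$ is injective, so the short exact sequence $0 \to N \to M \to M/N \to 0$ splits, giving $M \cong N \oplus (V_{p^n})^r$, which is exactly the shape of decomposition required by the theorem.

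To construct $N$ and verify (a), I would use the tower $F = E_0 \subset E_1 \subset \cdots \subset E_n = E$ of intermediate subfields and imitate the \cite{LMS3} construction one level at a time. Concretely, $N$ will be the submodule of $K_m^M(E)/p$ spanned by ``exceptional'' symbols built from norm-compatible systems of Kummer generators along the tower. Generalized Hilbert~90 for Milnor K-theory modulo $p$ (a consequence of the Norm Residue Isomorphism Theorem), applied inductively at each $E_i/E_{i-1}$, should then force the complementary quotient $M/N$ to be a sum of copies of the regular representation $V_{p^n}$.

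For (b), I would analyze the action of $\sigma - 1$ on each generator of $N$. Using the Steinberg relation in Milnor K-theory together with the norm-compatibility of the generating elements, one wants to show that any $(\sigma-1)$-chain beginning at an exceptional symbol terminates after at most $2p^{n-1}$ steps; this forces the indecomposable summand containing the symbol to have dimension at most $2p^{n-1}$.

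The principal obstacle lies in step (b), and it is sharper than it may first appear: a purely representation-theoretic reduction to the index-$p$ subgroup $\langle \sigma^{p^{n-1}}\rangle$, combined with the known result of \cite{LMS3} for $n=1$, does \emph{not} suffice. For instance, when $p=3$ and $n=2$, the ``bad'' modules $V_7$ and $V_8$ restrict to $\langle \sigma^{3}\rangle \cong \Z/3\Z$ as $V'_3\oplus V'_2\oplus V'_2$ and $V'_3\oplus V'_3\oplus V'_2$ respectively, and both decompositions are permitted by \cite{LMS3}. Thus the argument must track the $G$-action through all intermediate subgroups $H_i = \langle \sigma^{p^i}\rangle$ simultaneously, and exploit fine features of the Milnor K-theory symbol calculus rather than a single-layer restriction. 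The module-theoretic input is expected to consist of lemmas relating the fixed-submodule filtration $M^{H_0} \subseteq M^{H_1} \subseteq \cdots \subseteq M^{H_n} = M$ to the multiplicities of the individual $V_d$, from which the chain-length bound $2p^{n-1}$ can be enforced.
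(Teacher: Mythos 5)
Your overall packaging is sound --- $\F_p[G]$ is self-injective, so splitting off a free quotient is legitimate, and it is also correct that the theorem reduces to excluding indecomposables $V_d$ with $2p^{n-1}<d<p^n$. You also correctly identify the main obstruction: a one-layer reduction to $\langle\sigma^{p^{n-1}}\rangle$ combined with the $n=1$ result loses too much information, and the argument must track all intermediate subgroups $H_j=\langle\sigma^{p^j}\rangle$ simultaneously. That diagnosis matches the structure of the paper's Proposition \ref{prop:fixedeltsarenorms}.

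However, there is a genuine gap, and it is not a small one: step (b) --- the length bound $\leq 2p^{n-1}$ on the summands of $N$ --- is the entire content of the theorem, and your proposal only asserts that it ``should'' follow from Steinberg relations and norm-compatibility without supplying the argument. Worse, the specific construction you suggest for $N$ (``exceptional symbols built from norm-compatible systems of Kummer generators'') is the mechanism the paper reserves for the \emph{embeddable} decomposition of Theorem \ref{th:embeddable.decomposition}, and it does not transfer to the general setting of Theorem \ref{th:coarse.decomposition}. When $E/F$ is not embeddable, the exceptional element $a_n$ satisfies $a_n^{\sigma-1}\in E_{i(E/F)}^{\times}E^{\times p}$ with $i(E/F)\geq 0$, its cyclic module has dimension $p^{i(E/F)}+1$, and one cannot force the complementary quotient to be free of rank $p^n$ by imitating the embeddable construction. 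The paper's proof of Theorem \ref{th:coarse.decomposition} avoids exceptional elements entirely: it uses the filtration $V_\ell=\im((\sigma-1)^{\ell-1})\cap(k_mE)^G$ of Corollary \ref{cor:general.decomposition}, and the key input is Proposition \ref{prop:fixedeltsarenorms} applied at $j=0$: any $\gamma$ with $\ell_G(\gamma)>2p^{n-1}$ has $(\sigma-1)^{\ell_G(\gamma)-1}\gamma\in\iota_0^n(N_0^n(k_mE))=\im((\sigma-1)^{p^n-1})$, so that $V_\ell=V_{p^n}$ for $2p^{n-1}<\ell<p^n$, ruling out the forbidden dimensions directly. That proposition is proved by induction down the tower, with separate treatment according to whether $N_{n-1}^n(\gamma)=0$, whether $E_n/E_j$ is embeddable, and a special case $p=3,\ j=n-2$; none of this reduces to a short appeal to Hilbert~90 at each layer. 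Your proposal would need to supply an argument of comparable strength before it constitutes a proof.
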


Since there are $p^n$ isomorphism classes of indecomposable $\F_p[G]$-modules --- one for each cyclic submodule of $\F_p$-dimension $i$, $1 \leq i \leq p^n$ --- this result shows that the decomposition of $H^m(G_E,\mu_p)$ is relatively sparse.

A more refined decomposition is available, however, if we impose an additional assumption on the extension $E/F$.  When $\Gal(E/F) \simeq \Z/p^n\Z$ and $\xi_p \in E$, we say that $E/F$ is an \emph{embeddable} extension if $E/F$ is an intermediate extension in a larger Galois extension $E'/F$ so that $$\xymatrix{\Gal(E'/F) \ar[r]\ar[d]^{\simeq}& \Gal(E/F)\ar[d]^{\simeq}\\\Z/p^{n+1}\Z \ar[r] & \Z/p^n\Z,}$$ where the horizontal arrows are the natural projections.

In the case of embeddable extensions, we can then use results from \cite{MSS1} --- particularly the properties of so-called ``exceptional'' elements of $E$ (see Proposition \ref{prop:exceptional.element.criteria}) --- to give the following result.  In the statement of the result, we use $E_j$ to denote the intermediate field of degree $p^j$ over $F$ within the extension $E/F$.

\begin{theorem}\label{th:embeddable.decomposition}
Let $p>2$ be a given prime.  If $E/F$ is an embeddable extension and $a_n$ is an exceptional element, then as an $\F_p[\Gal(E/F)]$-module we have
$$H^m(G_E,\mu_p) \simeq X_0 \oplus X_1 \oplus \cdots \oplus X_{n-1} \oplus Y_0 \oplus Y_1 \oplus \cdots \oplus \oplus Y_{n},$$ where
\begin{itemize}
\item for each $0 \leq i \leq n$, both $Y_i$ and $X_i$ are direct sums of indecomposable modules of dimension $p^i$, with $Y_i \subseteq  \res_{E/E_i}\left(H^m(G_{E_i},\mu_p)\right)$ and $X_i \subseteq  (a_n) \cup \res_{E/E_i}\left(H^{m-1}(G_{E_i},\mu_p)\right)$; and
\item for each $i \geq 0$, $\res_{E/F}\left(\cor_{E_i/F}\left(H^m(G_{E_i},\mu_p)\right)\right) = (Y_i \oplus \cdots \oplus Y_n)^G$.
\end{itemize}
\end{theorem}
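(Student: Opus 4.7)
The plan is to refine the coarse decomposition from Theorem \ref{th:coarse.decomposition} using the exceptional element $a_n$, adapting the strategy of \cite{MSS1} (for $m=1$) to higher cohomological degree while borrowing the Kummer-theoretic machinery of \cite{LMS3}. Write $G = \Gal(E/F) = \langle\sigma\rangle$, $H_i = \Gal(E/E_i)$, and $t = \sigma - 1$, so that $\Fp[G] \cong \Fp[t]/(t^{p^n})$ and the indecomposable $\Fp[G]$-modules are cyclic of the form $V_d = \Fp[t]/(t^d)$ for $1 \le d \le p^n$. A class $\gamma \in H^m(G_E,\mu_p)$ generates a cyclic submodule of dimension at most $p^i$ precisely when $t^{p^i}\gamma = 0$, which is the same as $\gamma$ being fixed by $H_i$.

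The key structural input would be a dichotomy for classes fixed by $H_i$: every such class lies, modulo a free summand, in either $\res_{E/E_i}(H^m(G_{E_i},\mu_p))$ or in the $\Fp[G]$-submodule generated by $(a_n)\cup\res_{E/E_i}(H^{m-1}(G_{E_i},\mu_p))$. This is exactly where Proposition \ref{prop:exceptional.element.criteria} enters: the exceptional element $a_n$ is designed so that cup-product with $(a_n)$ realizes an $\Fp[G]$-equivariant complement to the image of restriction in degree $m$. With this dichotomy in hand, I would build the decomposition by downward induction on $i$: first choose $Y_n$ to be a maximal free $\Fp[G]$-summand of $H^m(G_E,\mu_p)$, and at each subsequent level $i<n$ peel off cyclic summands of dimension exactly $p^i$ from inside $H^m(G_E,\mu_p)^{H_i}$ modulo what has already been constructed at higher levels, placing each summand into $Y_i$ or $X_i$ according to the dichotomy above. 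The coarse bound ``dimension $\le 2p^{n-1}$'' from Theorem \ref{th:coarse.decomposition} then collapses, in the embeddable setting, to force every indecomposable to have dimension exactly $p^i$ for some $i$ --- this sharpening is the essential payoff of the exceptional element.

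For the corestriction identity in the second bullet point, I would use transitivity of restriction --- so that by construction $Y_j \subseteq \res_{E/E_i}(H^m(G_{E_i},\mu_p))$ for all $j \ge i$ --- together with the general identity $\res_{E/F}\circ\cor_{E_i/F}$ acting as the trace $\sum_{\tau\in G/H_i}\tau^*$ on the image of $\res_{E/E_i}$. On an indecomposable $V_{p^j}$ with $j\ge i$ this trace surjects onto $V_{p^j}^G$, while it vanishes on $V_{p^j}$ for $j<i$ and on each $X_j$ because the exceptionality of $a_n$ obstructs $a_n$-cup-product classes from lying in any norm image from a subfield $E_i$. The main obstacle is precisely the compatibility of these trace images across levels: the $Y_i$ must be chosen so that the corestriction identification holds for \emph{all} $i$ simultaneously, not just independently at each level. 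I expect this to require a single simultaneous construction that respects the entire restriction filtration $\res_{E/E_0}\subseteq\res_{E/E_1}\subseteq\cdots\subseteq\res_{E/E_n}$, rather than naive downward induction, and this bookkeeping will be the technical heart of the argument.
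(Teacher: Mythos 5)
Your high-level intuition is sound --- the $Y_i$ should come from norm images from subfields, the $X_i$ should come from cup-product with the exceptional element, and the corestriction identity follows because $\res_{E/F}\cor_{E_i/F}$ acts as a trace which stabilizes the image of $\res_{E/E_i}$. But the proposal has several gaps that prevent it from being a proof, and in at least one respect it proposes the wrong inductive structure.

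First, the key technical object is entirely absent: the submodule $\Gamma(m,n) \subseteq k_{m-1}E_{n-1}$ constructed in Proposition \ref{prop:Gamma module}, which is an $\F_p[G_{n-1}]$-complement to $N^n_{n-1}(k_{m-1}E_n)$ satisfying a delicate stratification. The module you call ``cup-product with $(a_n)$'' is actually $X := \{a_n^t\}\cdot\iota^n_{n-1}(\Gamma(m,n))$, and the fact that $X$ is a direct sum of cyclic modules of the correct sizes is \emph{inherited} from the structure of $\Gamma(m,n)$, not proved by a dichotomy. The existence and structure of $\Gamma(m,n)$ is itself established by a simultaneous induction on $m$ and $n$ that cites Theorem \ref{th:embeddable.decomposition} in one lower degree; this intertwined double induction is quite different from the ``downward induction on $i$'' you sketch, which has no base case in $m$ and no way to access the norm-complement structure one degree down.

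Second, the proposed ``dichotomy'' is not a theorem in the paper and, as stated, is not obviously true: the paper does not show that every $H_i$-fixed class decomposes modulo a free summand into the two images you name. What actually happens is a filtration argument: one shows directly that $V_{p^i+j} = V_{p^{i+1}}$ for $1 \le j \le p^{i+1}-p^i$, by taking $f = (\sigma-1)^{p^i+j-1}\gamma$, subtracting off a matching $x\in X$ with $N^n_{n-1}(x) = N^n_{n-1}(\gamma)$ so that $\gamma - x$ has trivial norm, and then invoking Proposition \ref{prop:fixedeltsarenorms} and Corollary \ref{cor:fixedeltsarenorms} on $\gamma - x$. That is where the proof gets traction, and your proposal offers no mechanism to replace it. You even flag ``the bookkeeping will be the technical heart'' without resolving it --- that flagged gap \emph{is} the content of Sections 3 and 4.

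Third, you never say where embeddability is used, and the claim that $\res\circ\cor$ ``vanishes on each $X_j$ because the exceptionality of $a_n$ obstructs $a_n$-cup-product classes from lying in any norm image'' is not quite right. The discriminating fact is the opposite: $N^n_{n-1}$ is \emph{injective} on $X$ (it restricts to an isomorphism $X \to \{a_{n-1}\}\cdot\Gamma(m,n)$), whereas all of $Y_i^G$ lands in $\ker N^n_{n-1}$ because $Y_i^G \subseteq \iota^n_0(N^i_0(k_mE_i)) \subseteq \im(\iota^n_{n-1})$. Embeddability enters in exactly one place: $a_n^{\sigma-1}\in E^{\times p}$ forces multiplication by $\{a_n^t\}$ to be $\F_p[G]$-equivariant, which is what makes $X\simeq\Gamma(m,n)$ as $\F_p[G]$-modules. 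Finally, your construction of the $Y_i$ by ``peeling off summands'' would need to be reconciled a posteriori with the corestriction identity, whereas the paper builds the $Y_i$ directly from bases of the filtration $\iota^n_0(N^i_0(k_mE_i))\supseteq\iota^n_0(N^{i+1}_0(k_mE_{i+1}))$, making the corestriction statement hold by construction rather than requiring a compatibility argument at the end.
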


Though the strategies for embeddable extensions cannot be translated directly into a decomposition of the Galois module structure of $H^m(G_E,\mu_p)$ when $E/F$ is not embeddable, this is nonetheless an important step towards resolving the more general case.  As an indication of this, we note that for a non-embeddable extension $E/F$, any proper subextension is embeddable.  For ``bottom-up'' inductive arguments (i.e., those which rely on studying subextensions which share a common base field), then, the embeddable case is of critical importance.  These kinds of arguments were already used to great effect in resolving the case $m=1,n>1$ in \cite{MSS1}, so it is likely that a resolution of the general (non-embeddable) case for higher cohomology will also include this strategy.



Section \ref{sec:preliminaries} outlines the basic ingredients necessary for the proofs of the main theorems, recalling important facts about Galois cohomology, module theory and field theory.  Section \ref{sec:Gamma} then gives a description of a submodule $\Gamma(m,n) \subseteq  H^{m-1}(G_{E_{n-1}},\mu_p)$ which is critical for our inductive approach.  Building on these results, Section \ref{sec:fixed.elements.are.norms} describes the major technical results needed to provide a proof of Theorem \ref{th:embeddable.decomposition} in Section \ref{sec:proof.of.theorem}.

\begin{remark}
Though the proof of Theorem \ref{th:embeddable.decomposition} relies on working in an embeddable extension, the other machinery we develop holds for extensions $E/F$ with $\Gal(E/F) \simeq \Z/p^n\Z$ and $\xi_p \in E$ ($p>2$ a prime) without insisting on embeddability.
\end{remark}

The case $p=2$ requires special treatment and is a work in progress.

\section{Preliminary Results}\label{sec:preliminaries}


\subsection{Reduced Milnor $K$-theory}
Though we've phrased our results in the language of Galois cohomology, the driving force in these proofs is the connection between these cohomology groups and reduced Milnor $K$-theory that was first described by the so-called Bloch-Kato conjecture, the content of which is stated in the following (recently proven) theorem. We shall denote by $k_mE$ the reduced Milnor $K$-groups $K_mE/pK_mE$.



\begin{theorem}\label{th:BlochKato}
For a field $E$ containing a primitive $p$th root of unity $\xi_p$, the natural map $$k_mE \to H^m(G_E,\mu_p)$$ is an isomorphism.  Moreover, if $G = \Gal(E/F)$ for some subextension $F$, then the isomorphism is $G$-equivariant on the two $G$-modules.
\end{theorem}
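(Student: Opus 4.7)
The first assertion --- that the norm residue map $k_mE \to H^m(G_E,\mu_p)$ is an isomorphism --- is the content of the Bloch--Kato (norm residue isomorphism) conjecture, now a theorem of Voevodsky, Rost, Weibel, and collaborators. My plan is to invoke this as a black box rather than attempt to reprove it; any genuine proof is far outside the scope of a sketch. The only preliminary needed is a coefficient identification: the usual formulation produces $H^m(G_E,\mu_p^{\otimes m})$, and one identifies $\mu_p^{\otimes m}$ with $\mu_p$ by fixing the chosen generator $\xi_p$.

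For the $G$-equivariance assertion with $G = \Gal(E/F)$, I would begin with a simplification available in this paper's setting: since $\Gal(E/F) \simeq \Z/p^n\Z$ is a $p$-group while $[F(\xi_p):F]$ divides $p-1$, we obtain $F(\xi_p) = F$, so in fact $\xi_p \in F$. Consequently the identification $\mu_p^{\otimes m} \simeq \mu_p$ is $G_F$-equivariant (with trivial action on both sides), and the problem reduces to verifying $G$-equivariance of the natural symbol map $k_mE \to H^m(G_E,\mu_p^{\otimes m})$.

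This equivariance is then checked level by level. The $G$-action on $H^*(G_E,\mu_p^{\otimes m})$ is the standard action associated to the normal inclusion $G_E \trianglelefteq G_F$, defined by conjugating cocycles by a lift $\tilde\sigma \in G_F$ of $\sigma \in G$ together with $\tilde\sigma$'s action on the coefficient module (here trivial). On the Milnor side, $\sigma$ acts on $E^\times$ and hence on symbols by multiplicative extension. First I would verify that the Kummer isomorphism $E^\times/(E^\times)^p \to H^1(G_E,\mu_p)$ is $G$-equivariant: choosing $p$th roots compatibly so that $\tilde\sigma(a^{1/p}) = \tilde\sigma(a)^{1/p}$, this is a direct cocycle calculation using $\tilde\sigma|_{\mu_p} = \mathrm{id}$. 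Next I would invoke the standard compatibility of cup products with conjugation actions on cochains. Combining these yields $G$-equivariance of $\{a_1,\dots,a_m\} \mapsto (a_1)\cup\cdots\cup(a_m)$.

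The sole serious obstacle is Bloch--Kato itself, which is imported from the literature; the equivariance portion is a routine unwinding of definitions once one observes $\xi_p \in F$ and appeals to naturality of Kummer theory and of the cup product.
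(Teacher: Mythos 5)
Your proposal matches the paper's own treatment: the paper does not prove Theorem~\ref{th:BlochKato} but imports it from the Rost--Voevodsky--Weibel resolution of the Bloch--Kato conjecture (citing \cite{MS1,Vo1,Ro1,Ro2,Vo2,HW,W1,W2,W3}), with the $G$-equivariance left implicit as a naturality statement. Your added remarks --- the identification $\mu_p^{\otimes m} \simeq \mu_p$ via a fixed $\xi_p$, the observation that $\xi_p \in F$ since $[F(\xi_p):F] \mid p-1$ is coprime to $[E:F] = p^n$, and the reduction of equivariance to naturality of the Kummer map and cup product --- are all correct and spell out details the paper takes for granted.
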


The process of proving the Bloch-Kato conjecture began with Merkurjev and Suslin \cite{MS1}, who verified the case $m=2$ for all primes $p$.  The case where $p=2$ and $m$ is arbitrary was resolved by Voevodsky \cite{Vo1}. Recently Rost and Voevodsky together with Weibel's patch proved the Bloch-Kato conjecture for all $p$ and $m$.  For details, see \cite{Ro1,Ro2,Vo2,HW,W1,W2,W3}. In what follows, we will employ Theorem \ref{th:BlochKato} without mention to identify Galois cohomology and reduced Milnor K-theory.

The strategy we employ will require generalizations to $k$-theory of some well-known results from field theory, namely Hilbert's Theorem 90 and Kummer theory.  In this new setting, both of these results deal with extensions $E/F$ that are degree $p$ and Galois.   In the results that follow, $N_{E/F}$ denotes the map induced on $K$-theory by the field norm, and $\iota_{E/F}$ denotes the map induced on $K$-theory by inclusion.  

The results below can be deduced from the papers cited above in the proof of the Bloch-Kato conjecture, and they are in fact important parts of the proof.  An exposition of the precise results leading to these Propositions is contained in Section 2 of \cite{LMSS}.

\begin{proposition}[Hilbert 90 for $K$-theory]\label{prop:hilbert.90.for.k.theory}
If $\Gal(E/F) = \langle \sigma \rangle \simeq \Z/p\Z$, then the sequence
\begin{equation}\label{eq:hilbert.90.for.k.theory}
K_mE \xrightarrow{\ \sigma-1\ } K_mE \xrightarrow{N_{E/F}} K_mF
\end{equation}
is exact.
\end{proposition}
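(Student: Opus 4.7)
The plan is to establish the two containments separately. For the easy direction $\im(\sigma - 1) \subseteq \ker(N_{E/F})$, I would first record the Galois invariance of the norm map: for any $\alpha \in K_m E$ and $\tau \in \Gal(E/F)$, $N_{E/F}(\tau \alpha) = N_{E/F}(\alpha)$. In the $m=1$ case this is immediate from the definition of the field norm as a product over Galois conjugates; for $m \geq 2$ it follows from the compatibility of the Milnor transfer with the Galois action, which is a routine consequence of the projection formula $N_{E/F}(\alpha \cdot \iota_{E/F}(\beta)) = N_{E/F}(\alpha) \cdot \beta$ together with multiplicativity. Specializing to $\tau = \sigma$ gives $N_{E/F} \circ (\sigma - 1) = 0$, which is the desired containment.

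The hard direction $\ker(N_{E/F}) \subseteq \im(\sigma - 1)$ is the substantive content and will be the main obstacle. When $m = 1$ it is the classical multiplicative Hilbert 90: if $x \in E^\times$ satisfies $N_{E/F}(x) = 1$, then $x = \sigma(y)/y$ for some $y \in E^\times$, provable elementarily by independence of characters. For $m \geq 2$ no analogous symbolic argument is available, because an element of $K_m E$ lying in $\ker(N_{E/F})$ is in general a sum of symbols with no preferred presentation from which to extract a preimage under $\sigma - 1$, and moreover the Milnor transfer on $K_m E$ is not literally the Galois trace $1 + \sigma + \cdots + \sigma^{p-1}$ except when $m = 1$.

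The approach I would take is therefore to extract the statement directly from the deep machinery built up in the proof of the Bloch--Kato/norm residue isomorphism theorem. The vanishing $\ker(N_{E/F})/\im(\sigma - 1) = 0$ for cyclic extensions of prime degree is precisely the Hilbert 90 statement for Milnor $K$-theory that is interwoven with the Beilinson--Lichtenbaum property and established in the work of Voevodsky, Rost, and others. Rather than reconstruct this chain of reductions, I would defer to the distilled exposition in Section 2 of \cite{LMSS}, where the statement is pulled directly out of the relevant components of \cite{Vo1,Vo2,Ro1,Ro2,W1,W2,W3}. The main obstacle is exactly this: no purely elementary $K$-theoretic manipulation suffices for the reverse containment when $m \geq 2$, so one must invoke, directly or indirectly, the full machinery surrounding the norm residue isomorphism.
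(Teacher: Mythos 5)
Your proposal matches the paper's treatment: the paper offers no proof of this proposition, instead noting that it "can be deduced from the papers cited above in the proof of the Bloch-Kato conjecture" and pointing to Section 2 of \cite{LMSS} for an exposition, which is exactly the route you take. Your added remarks on the easy containment $\im(\sigma-1)\subseteq\ker N_{E/F}$ and on why the reverse containment resists elementary argument for $m\geq 2$ are accurate but go beyond what the paper records.
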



\begin{proposition}[Kummer Theory]
Continuing with the assumptions of Proposition \ref{prop:hilbert.90.for.k.theory}, suppose that $\xi_p \in E$ and that $E = F(\root{p}\of{a})$ for $a \in F^\times$. Then the sequence
\begin{equation}\label{eq:kummer.for.k.theory}
k_{m-1}E \xrightarrow{N_{E/F}} k_{m-1}F \xrightarrow{\{a\}\cdot -} k_mF \xrightarrow{\iota_{E/F}} k_mE
\end{equation}
is exact.
\end{proposition}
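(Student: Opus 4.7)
The plan is to derive the Kummer sequence from Hilbert 90 for $K$-theory (Proposition \ref{prop:hilbert.90.for.k.theory}) together with the projection formula, exploiting the defining relation $\{a\} = p\{x\}$ in $K_1 E$ (where $x = \sqrt[p]{a}$) to trade cup products with $\{a\}$ on the base $F$ for expressions that become divisible by $p$ after restriction to $E$. Writing $G = \Gal(E/F) = \langle \sigma \rangle$ with $\sigma x = \xi_p x$, one also records the identity $(\sigma-1)\{x\} = \{\xi_p\}$ in $K_1 E$, which will be the main mechanism linking the $G$-action to cup products with $\{a\}$.

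The two inclusions $\im(N_{E/F}) \subseteq \ker(\{a\}\cdot -)$ and $\im(\{a\}\cdot -) \subseteq \ker(\iota_{E/F})$ are essentially formal: the projection formula gives $\{a\}\cdot N_{E/F}(\beta) = N_{E/F}(\iota_{E/F}\{a\} \cdot \beta) = p\, N_{E/F}(\{x\}\cdot \beta)$, which vanishes in $k_m F$, while $\iota_{E/F}(\{a\}\cdot \alpha) = p\{x\}\cdot \iota_{E/F}(\alpha)$ vanishes in $k_m E$. This handles the ``easy half'' of the proposition.

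For the reverse inclusion at $k_m F$, I would take $\alpha \in K_m F$ with $\iota_{E/F}(\alpha) = p\gamma$ for some $\gamma \in K_m E$, and apply $(\sigma-1)$ to $\gamma$. Using Hilbert 90 to rewrite norm-kernel elements as $(\sigma-1)$-images, and then folding in the relation $p\{x\} = \{a\}$, the target is to peel off a term of the form $\{a\}\cdot \alpha''$ from $\alpha$ and leave a residual $p$-multiple in $K_m F$, so that $\alpha \equiv \{a\}\cdot \alpha'' \pmod{p K_m F}$.

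The main obstacle is the reverse inclusion at $k_{m-1}F$: from $\{a\}\cdot \alpha \in pK_m F$ one must produce $\beta \in K_{m-1} E$ with $\alpha \equiv N_{E/F}(\beta) \pmod{pK_{m-1} F}$. A natural first move is to form $\{x\}\cdot \iota_{E/F}(\alpha) \in K_m E$ and exploit $(\sigma-1)\bigl(\{x\}\cdot \iota_{E/F}(\alpha)\bigr) = \{\xi_p\}\cdot \iota_{E/F}(\alpha)$ in tandem with Hilbert 90. However, \emph{descending} the resulting relation from $K_m E$ back to $K_{m-1} F$ — and verifying that the descended element is indeed a norm — requires precisely the delicate $K$-theoretic manipulations that underlie the Rost--Voevodsky proof of Bloch--Kato. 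Rather than reprove these in situ, I would invoke the exposition in Section 2 of \cite{LMSS}, where the reduction of Kummer theory for $K$-theory to the established Bloch--Kato machinery is carried out in detail.
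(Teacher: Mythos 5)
You should note first that the paper does not actually prove this proposition: it is presented as a known consequence of the Bloch--Kato machinery, with an explicit pointer to Section~2 of \cite{LMSS}, and the surrounding text even emphasizes that Propositions~\ref{prop:hilbert.90.for.k.theory} and the Kummer sequence ``are in fact important parts of the proof'' of Bloch--Kato rather than corollaries of it. Your ``easy half'' --- that the sequence is a complex --- is correct and standard: the projection formula together with $\iota_{E/F}\{a\} = \{x^p\} = p\{x\}$ and the $K_*F$-linearity of $N_{E/F}$ gives both vanishings mod $p$, and your observation that $(\sigma-1)\{x\}=\{\xi_p\}$ is accurate. Your decision to cite \cite{LMSS} for the exactness at $k_{m-1}F$ is also aligned with what the paper itself does.

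The gap is in the exactness at $k_mF$, which you treat as achievable by elementary manipulation with Hilbert~90 and the relation $p\{x\}=\{a\}$. That step does not close. Given $\alpha\in K_mF$ with $\iota_{E/F}(\alpha)=p\gamma$, applying $\sigma-1$ and Hilbert~90 to $(\sigma-1)\gamma\in\ker N_{E/F}$ only produces an element $\varepsilon$ with $\gamma-\varepsilon\in(K_mE)^G$; but $(K_mE)^G$ is not $\iota_{E/F}(K_mF)$, and $p$-torsion in $K_mF$ and $K_mE$ blocks the transfer argument $pN_{E/F}(\gamma)=p\alpha$ from upgrading to $N_{E/F}(\gamma)=\alpha$. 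Hilbert~90 for $K$-theory controls $\ker(N_{E/F})$, not $\ker(\iota_{E/F})$, and turning the latter into a statement about $\{a\}\cdot K_{m-1}F$ is precisely one of the deep exactness statements established alongside Bloch--Kato (via norm varieties and the Rost--Voevodsky program), not a formal consequence of it. You should therefore attribute \emph{both} nontrivial exactness claims --- at $k_mF$ and at $k_{m-1}F$ --- to the exposition in \cite{LMSS}, rather than attempting to derive the middle one in situ.
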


Finally, we need a result which allows one to easily compute the norm of a special class of symbols.  

\begin{proposition}[Projection Formula, {\cite[Chap.~9, Thm.~3.8]{FV}}]
Let $E/F$ be a Galois extension of fields, and let $e \in E^\times$ and $\gamma \in K_{m-1}F$ be given.  Then
\begin{equation}\label{eq:projection.formula}
N_{E/F}\left(\{e\}\cdot\iota_{E/F}(\gamma)\right) = \{N_{E/F}(e)\}\cdot \gamma.
\end{equation}
\end{proposition}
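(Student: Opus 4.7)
The plan is to reduce the identity to the case where $\gamma$ is a single symbol, and then invoke the construction of the Milnor K-theory transfer. By $\Z$-linearity of both sides of \eqref{eq:projection.formula} in $\gamma$, and since $K_{m-1}F$ is generated as an abelian group by products $\{f_1\}\cdot \ldots \cdot \{f_{m-1}\}$ of one-symbols with $f_i \in F^\times$, it suffices to verify the formula when $\gamma$ is such a product. In that case $\iota_{E/F}(\gamma) = \{f_1, \ldots, f_{m-1}\}$ viewed in $K_{m-1}E$, and the claim simplifies to the symbol-level equation
\[
N_{E/F}\bigl(\{e, f_1, \ldots, f_{m-1}\}\bigr) = \{N_{E/F}(e), f_1, \ldots, f_{m-1}\}
\]
in $K_m F$.

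The base case $m=1$ (so $\gamma \in K_0 F = \Z$) is immediate from the $\Z$-linearity of the ordinary field norm on $K_1 = F^\times$. For $m > 1$, I would proceed by induction, using the Bass--Tate construction of $N_{E/F}$: writing $E = F(\theta)$ for a primitive element $\theta$, the transfer on $K_m$ is assembled from tame symbols at the closed points of $\mathbb{A}^1_F$ associated to the minimal polynomial of $\theta$, and these tame symbols satisfy a local version of the projection formula. Since each $f_i$ lies in $F$, the element $f_i$ is a unit at every closed point of $\mathbb{A}^1_F$ other than $0$ and $\infty$, and multiplication by $\{f_i\}$ therefore commutes with the formation of these tame symbols. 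Hence the desired identity propagates through the construction. A detailed execution of this strategy is carried out in \cite[Chap.~9]{FV}.

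The main obstacle in carrying out this plan is foundational rather than computational: one must first verify that the Bass--Tate construction yields a transfer on Milnor K-theory that is well-defined on symbols, respects the Steinberg relation, and is independent of the choice of primitive element $\theta$ (and, when $E/F$ is not simple, of the chosen chain of simple subextensions). Once those well-definedness statements are in hand, the projection formula emerges essentially as a formal consequence of the multiplicative compatibility of the tame symbol with elements pulled back from the base field.
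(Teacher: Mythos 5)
The paper does not prove this Proposition at all; it simply cites it as \cite[Chap.~9, Thm.~3.8]{FV} and uses it as a black box. Your sketch is a faithful outline of the standard argument that one finds in such references: reduce to monomials by bilinearity, handle $K_0$ directly, build the transfer via the Bass--Tate construction using tame symbols on $\mathbb{A}^1_F$, and observe that multiplication by a symbol coming from $F$ commutes with these residues. You are also right that the real work is foundational --- well-definedness of the transfer, independence of the choice of primitive element and, for non-simple extensions, of the chosen tower of simple subextensions (Kato's theorem) --- and that once those are in place the projection formula drops out almost formally. So your proposal is correct and in the same spirit as the cited source, but it is a proof sketch pointing at \cite{FV} rather than a self-contained proof; since the paper makes exactly the same move (deferring the proof entirely to \cite{FV}), there is no mismatch. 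One minor remark: the Galois hypothesis in the Proposition's statement is not actually needed for the projection formula, which holds for arbitrary finite extensions; your argument correctly makes no use of it.
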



\subsection{Field Results} 
Since our focus is on embeddable extensions, there are several simple Galois-theoretic consequences which will be useful to record.

Recall that we denote by $E_i$ the intermediate field of degree $p^i$ over $F$ within $E/F$.  Hence we will interchangeably refer to $E$ as $E_n$ and $F$ as $E_0$.  We will write $G_i$ for the quotient group $\Gal(E_i/F) \simeq \Z/p^i\Z$ and $H_i$ for the subgroup $\Gal(E/E_i) \simeq \Z/p^{n-i}\Z$.  For convenience, we will carry this notation over to abbreviate relevant inclusion and norm maps: $\iota_j^i$ will denote $\iota_{E_i/E_j}$ and $N^i_j$ will denote $N_{E_i/E_j}$, both for fields and their $K$-theory.

Since we have assumed $\xi_p \in E$ we must have $\xi_p \in E_i$ for every $0 \leq i \leq n$, and so it follows by Kummer Theory that for every $0 \leq i \leq n-1$ we may find elements $a_i \in E_i$ so that $E_{i+1} = E_i(\root{p}\of{a_i})$ .  In fact, it was shown in \cite[Prop.~1]{MSS1} that these $a_i$ can be selected to satisfy the following norm compatibility criteria: \begin{equation}\label{eq:normcompatibility}N_j^i a_i = a_j \quad \mbox{ for any } j \leq i \leq n-1.\end{equation} It is also shown that for $0\leq i \leq n-1$, the $p$th power class of each of these $a_i$ is fixed by its respective Galois group:
\begin{equation}\label{eq:ai.are.fixed}
\tau(a_i) \in a_iE_i^{\times p} \quad \mbox{ for every } \tau \in G_i.
\end{equation}

In \cite{MSS1}, exceptional elements for the extension $E/F$ are defined as a kind of ``minimal'' extension of the above equations to $i=n$.  The definition there is expressed in terms of classes of elements in $E^\times/E^{\times p}$; we now present an equivalent formulation for elements in $E^\times$.  For a general field $E/F$ with $\Gal(E/F) \simeq \Z/p^n\Z$ and containing $\xi_p$, an exceptional element $a_n \in E^\times$ for the extension $E/F$ is an element with $N_{E/F}(a_n) \in E^{\times p} \setminus F^{\times p}$, and such that $a_n^{\sigma-1} \in E_{i(E/F)}^\times E^{\times p}$, where $$i(E/F) = \min_{N_{E/F}(a) \in E^{\times p} \setminus F^{\times p}} \left\{i: a^{\sigma-1} \in E_i^\times E^{\times p}\right\}.$$ (Here, $E_{-\infty}$ is taken to be $\{1\}$.)  Hence the possible values for $i(E/F)$ are from the set $\{-\infty, 0, 1, \cdots, n\}$;  in \cite[Theorem 3]{MSS1}, we show that in fact $i(E/F) \leq n-1$.

One can show that exceptional elements exist under the hypothesis $p>2$ (\cite[Prop.~2]{MSS1}), and that the cyclic $\F_p[\Gal(E/F)]$-submodule generated by an exceptional element has $\F_p$-dimension $p^{i(E/F)}+1$ (\cite[Prop.~7]{MSS1}).  For an exceptional element $a_n$, this fact, together with the condition $N_{E/F}(a_n) \in E^{\times p} \setminus F^{\times p}$,  ensures that $N^n_j(a_n^t)= a_j e_j^p$ for some $t \in \Z\setminus p\Z$ and $e_j \in E_j^{\times}$(\cite[Lemma 8]{MSS1}).  (One might naturally think to select $a_n$ so that $N^n_j(a_n) = a_j$ in analogy to (\ref{eq:normcompatibility}), but our weaker condition is chosen to account for the set $\{a_i\}$ being non-canonical.) 

Our embeddability hypothesis is equivalent to the condition that $i(E/F) = -\infty$, so that $a_n^{\sigma-1} \in E^{\times p}$ in this case.  This condition will become important in the final stages of our proof of Theorem \ref{th:embeddable.decomposition}, but isn't necessary for other results we describe. 

For our purposes, the relevant properties of exceptional elements are outlined in the following


\begin{proposition}\label{prop:exceptional.element.criteria}
For each $1 \leq i < n$, the element $a_i$ is exceptional for the extension $E_i/F$.
Any exceptional element $a_n$ of an embeddable extension $E/F$ satisfies $N^n_{n-1}(a_n) = a_{n-1}^te^p$ for some $t \in \Z\setminus p\Z$ and $e \in E_{n-1}^{\times}$, and furthermore $a_n^{\sigma-1} \in E^{\times p}$.
\end{proposition}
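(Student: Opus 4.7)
The plan is to prove all three claims by carefully unwinding the definition of an exceptional element against the properties of the $a_i$ already recorded as (\ref{eq:normcompatibility}) and (\ref{eq:ai.are.fixed}), together with the already-cited \cite[Lemma 8]{MSS1} and the equivalence (noted in the text preceding the proposition) between embeddability of $E/F$ and $i(E/F) = -\infty$.

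For the first claim, I would fix $1 \le i < n$ and let $\sigma$ denote the restriction to $E_i$ of a chosen generator of $\Gal(E/F)$, then verify the two defining conditions of exceptionality for $E_i/F$. The norm condition follows at once from (\ref{eq:normcompatibility}) with $j = 0$: we get $N_{E_i/F}(a_i) = N^i_0(a_i) = a_0$, where $a_0 \in F^\times \setminus F^{\times p}$ (otherwise $E_1 = F(\sqrt[p]{a_0})$ would collapse to $F$), while $a_0 = (\sqrt[p]{a_0})^p \in E_1^{\times p} \subseteq E_i^{\times p}$. The Galois-action condition follows from (\ref{eq:ai.are.fixed}) applied with $\tau = \sigma$, giving $a_i^{\sigma - 1} \in E_i^{\times p} = E_{-\infty}^\times \cdot E_i^{\times p}$. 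Since $a_i$ itself satisfies both the norm condition and this containment with index $-\infty$, and since $-\infty$ is the smallest allowed value in the definition of $i(E_i/F)$, the minimum is attained at $-\infty$, so $a_i$ is exceptional for $E_i/F$ (and, in passing, $E_i/F$ is embeddable).

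For the second claim, I would apply \cite[Lemma 8]{MSS1} with $j = n-1$ to obtain $(N^n_{n-1}(a_n))^t = a_{n-1}\, e^p$ for some $t \in \Z \setminus p\Z$ and $e \in E_{n-1}^\times$, then pick $s \in \Z$ with $ts \equiv 1 \pmod{p}$, write $ts = 1 + kp$, raise both sides to the $s$th power, and rearrange. This yields
$$N^n_{n-1}(a_n) = a_{n-1}^{s} \cdot \bigl(e^{s}\, N^n_{n-1}(a_n)^{-k}\bigr)^p,$$
with $s$ coprime to $p$, which is the claimed decomposition. Finally, the third claim is immediate from the stated equivalence: embeddability of $E/F$ gives $i(E/F) = -\infty$, and the definition of exceptional then reads $a_n^{\sigma - 1} \in E_{-\infty}^\times \cdot E^{\times p} = E^{\times p}$. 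The proof is essentially a bookkeeping exercise against the definitions; the only mild subtlety is confirming that $-\infty$ really is the attained minimum in the first part, and this comes for free from $a_i$ serving as its own witness.
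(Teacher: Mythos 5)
Your proposal is correct and follows essentially the same route as the paper: the first claim comes from unwinding the definition of exceptionality against the norm-compatibility and fixed-class conditions on the $a_i$, the second from \cite[Lemma~8]{MSS1} (your explicit $ts \equiv 1 \pmod p$ rearrangement just spells out what the paper leaves implicit in ``one applies Lemma~8''), and the third from the equivalence between embeddability and $i(E/F) = -\infty$. The only small difference is that for the third claim you lean on that equivalence as a fact ``noted in the text preceding the proposition,'' whereas the paper's proof is where that equivalence actually gets its justification, via \cite[Thm.~3]{MSS1} together with Albert's theorem that $E/F$ is embeddable if and only if $\xi_p \in N^n_0(E^\times)$; to make the argument self-contained you should cite those sources rather than the preamble's forward-looking assertion.
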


\begin{proof}
The first statement follows directly from the definition of exceptionality and conditions (\ref{eq:normcompatibility}) and (\ref{eq:ai.are.fixed}).  The final statement follows from \cite[Thm.~3]{MSS1} together with the result of Albert \cite{A} which shows $E_n/E_0$ is embeddable if and only if $\xi_p \in N^n_0(E_n^\times)$.  Finally, to show that an exceptional element $a_n$ of an embeddable extension $E/F$ satisfies $N^n_{n-1}(a_n) = a_{n-1}^t e^p$ as above, one applies \cite[Lemma~8]{MSS1}.
\end{proof}

As a final remark, we point out that as operators on $E^\times/E^{\times p}$, we have the identity $$\sum_{i=0}^{p^n-1} \sigma^i \equiv (\sigma-1)^{p^n-1}.$$  Hence we have that $N_0^n(e) \equiv e^{(\sigma-1)^{p^n-1}} \mod{E^{\times p}}$.  More generally, we have that the norm operator $N^i_j$ is given by the action of $(\sigma^{p^j}-1)^{p^{i-j}-1} \equiv (\sigma-1)^{p^i-p^j}$.  For the $\F_p[G_i]$-modules $k_mE_i$, we have the related identity $\iota_j^i\circ N_j^i \equiv (\sigma-1)^{p^i-p^j}$.  We will make frequent use of these identities throughout the remainder of the paper.

\subsection{Module Theory}

Finally, we remind the reader of the essential facts about $\F_p[G]$-modules; with the exception of $E^\times/E^{\times p}$, we shall write our $\F_p[G]$-modules additively.

Much of the theory of $\F_p[G]$-modules comes from the fact that $\F_p[G]$ is a discrete valuation ring with maximal ideal generated by $\sigma-1$, where for the rest of the paper we use $\sigma$ to denote a generator of $G$.  For instance, this tells us that the cyclic submodule generated by an element $w$ is isomorphic to the indecomposable $\F_p[G]$-module $$A_{\ell(w)} := \F_p[G]/(\sigma-1)^{\ell(w)},$$ where $\ell(w)$ --- the so-called length of $w$ --- is defined as the minimum value of $\ell$ so that $(\sigma-1)^\ell w = 0$.  In turn this implies that the $\F_p$-dimension of the cyclic submodule generated by $w$ is $\ell(w)$.  From this it is not difficult to see that $\ell(w+v) \leq \max\{\ell(w),\ell(v)\}$, with equality whenever $\ell(w) \neq \ell(v)$.

We can also prove the following

\begin{lemma}[Exclusion Lemma]\label{le:exclusion}
Suppose that $\{U_\alpha\}_{\alpha \in \mathcal{A}}$ are $\F_p[G]$-submodules of  a fixed $\F_p[G]$-module $W$.  Then the submodules $\{U_\alpha\}$ are $\F_p[G]$-independent if and only if the $\F_p$-subspaces $\{U_\alpha^G\}$ are independent.  Equivalently, $$\sum_{\alpha \in \mathcal{A}} U_\alpha^G = \bigoplus_{\alpha \in \mathcal{A}} U_\alpha^G  \Longleftrightarrow \sum_{\alpha \in \mathcal{A}} U_\alpha =  \bigoplus_{\alpha \in \mathcal{A}} U_\alpha.$$
\end{lemma}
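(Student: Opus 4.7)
The plan is to reduce the statement to the fact that the augmentation ideal $(\sigma-1)$ of $\F_p[G]$ acts nilpotently on every $\F_p[G]$-module, so every nonzero submodule has a nonzero $G$-fixed element. Concretely, since $\sigma^{p^n}=1$ and $\chr \F_p = p$, we have $(\sigma-1)^{p^n} = \sigma^{p^n}-1 = 0$ in $\F_p[G]$; hence for any nonzero $w$ in any $\F_p[G]$-module there is a least $\ell \geq 1$ with $(\sigma-1)^\ell w = 0$, and then $(\sigma-1)^{\ell-1}w$ is a nonzero element of the fixed submodule. This is the one nontrivial ingredient and the closest thing to an obstacle, but it is standard.

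Next I would package the statement in terms of the natural $\F_p[G]$-linear surjection
\[
\varphi \colon \bigoplus_{\alpha \in \mathcal{A}} U_\alpha \longrightarrow \sum_{\alpha \in \mathcal{A}} U_\alpha \subseteq W,
\]
so that the $U_\alpha$ are $\F_p[G]$-independent if and only if $K := \ker\varphi = 0$. Taking $G$-invariants commutes with direct sums, so
\[
\bigl(\textstyle\bigoplus_{\alpha} U_\alpha\bigr)^G = \bigoplus_{\alpha} U_\alpha^G,
\]
and an element of $K^G$ is exactly a tuple $(u_\alpha)$ with $u_\alpha \in U_\alpha^G$ and $\sum_\alpha u_\alpha = 0$ in $W$. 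Therefore $K^G = 0$ if and only if the family $\{U_\alpha^G\}$ is $\F_p$-independent.

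Combining the two observations gives the lemma: the forward implication ``$\{U_\alpha\}$ independent $\Rightarrow$ $\{U_\alpha^G\}$ independent'' is immediate from $K = 0 \Rightarrow K^G = 0$, while the reverse implication uses the nilpotence observation in its contrapositive form: if $\{U_\alpha\}$ is not independent, then $K \neq 0$, so $K^G \neq 0$, so $\{U_\alpha^G\}$ is not $\F_p$-independent. The two displayed sum equalities in the lemma statement are just a reformulation of the two independence conditions, so nothing further is needed.
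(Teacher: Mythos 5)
Your argument is correct, and it takes a cleaner route than the paper. Both proofs ultimately rest on the same key fact — $(\sigma-1)^{p^n}=0$ in $\F_p[G]$, so every nonzero $\F_p[G]$-module has a nonzero $G$-fixed element — but the bookkeeping is organized differently. The paper first reduces to finite families, then inducts on the number of submodules: it handles the two-module case $U,V$ by showing $x\in U\cap V$ forces $(\sigma-1)^{\ell(x)-1}x\in U^G\cap V^G=\{0\}$ and hence $x=0$, and then adds one module at a time. You instead encode independence of the whole family at once as the vanishing of the kernel $K$ of $\varphi\colon\bigoplus_\alpha U_\alpha\to\sum_\alpha U_\alpha$, observe that taking $G$-invariants commutes with direct sums so that $K^G$ is precisely the space of $\F_p$-relations among the $U_\alpha^G$, and close with the nilpotence observation in the form $K\neq 0\Rightarrow K^G\neq 0$. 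This avoids the induction entirely and handles arbitrary index sets automatically (since elements of a direct sum have finite support). The two approaches are interchangeable here; yours is a bit more structural, the paper's a bit more hands-on.
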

\begin{proof}
We prove the result by induction.  Notice that it suffices to prove the result for a finite collection of submodules, since a dependence amongst an infinite collection of submodules is defined to be a dependence amongst some finite subcollection.

In the case of the two modules $U$ and $V$, the $\F_p[G]$-independence of $U$ and $V$ implies the $\F_p$-independence of $U^G$ and $V^G$.  So suppose that $U^G$ and $V^G$ are $\F_p$-independent, and we show that $U \cap V = \{0\}$. Suppose that $x \in U \cap V$, and note that $(\sigma-1)^{\ell(x)-1}x \in U^G \cap V^G$.  Since $U^G \cap V^G = \{0\}$ by assumption, we have $(\sigma-1)^{\ell(x)-1}x = 0$.  Since $(\sigma-1)^{\ell(w)-1}w \neq 0$ whenever $w \neq 0$, we conclude that $x=0$.

To prove the result for a collection of $m$ submodules $U_1, \cdots, U_m$, notice that by induction the $\F_p$-independence of $\{U_i^G\}_{i=1}^{m-1}$ implies $V = \sum_{i<m}U_i = \oplus_{i<m}U_i$.  From the paragraph above, the $\F_p$-independence of $U_m^G$ and $V^G  = \oplus_{i=1}^{m-1}U_i^G$ then gives $V \cap U_m = \{0\}$, so that $$\sum_{i\leq m}U_i = V + U_m = V \oplus U_m = \left(\oplus_{i<m}U_i\right) \oplus U_m = \bigoplus_{i\leq m}U_i.$$
\end{proof}

Though seemingly humble, this theorem unlocks the structure of an arbitrary $\F_p[G]$-module $W$.  Toward this end, for an $\F_p[G]$-module $W$, we will write $V_i$ for the submodule $\im((\sigma-1)^{i-1}) \cap W^G$.  Notice that each $V_i$ is an $\F_p$-space with trivial $G$-action, and that the $V_i$ provide a filtration of $W^G$:$$V_{p^n} \subseteq  V_{p^n-1} \subseteq  \cdots \subseteq  V_2 \subseteq  V_1 = W^G.$$

\begin{corollary}\label{cor:choose.your.decomposition}
For an $\F_p[G]$-submodule $W$, let $\mathcal{I}_{p^n}$ be a basis for $V_{p^n}$, and for $1 \leq k < p^n$ let $\mathcal{I}_k$ be a basis for a complement of $V_{k+1}$ within $V_{k}$.  For each $x \in \mathcal{I}_k$, let $\alpha_x \in W$ satisfy $(\sigma-1)^{k-1}\alpha_x = x$.  Then
$$W = \bigoplus_{k=1}^{p^n} \bigoplus_{x \in \mathcal{I}_k} \langle \alpha_x \rangle_{\F_p[G]}.$$
\end{corollary}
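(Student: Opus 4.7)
The plan is to establish the decomposition in two stages: first apply the Exclusion Lemma (Lemma~\ref{le:exclusion}) to see that the sum of the cyclic submodules $\langle \alpha_x\rangle_{\F_p[G]}$ is direct, then induct on length to see that this direct sum exhausts $W$.

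For the first stage, I would observe that $\ell(\alpha_x) = k$ whenever $x \in \mathcal{I}_k$: indeed $(\sigma-1)^{k-1}\alpha_x = x \neq 0$ gives $\ell(\alpha_x) \geq k$, and $(\sigma-1)^k\alpha_x = (\sigma-1)x = 0$ (since $x \in V_k \subseteq W^G$) gives $\ell(\alpha_x) \leq k$. Consequently $\langle \alpha_x\rangle_{\F_p[G]} \simeq A_k$, and its fixed submodule is exactly $\F_p\cdot x$. Next, the successive complementary choices $\mathcal{I}_k$, together with $V_{p^n+1} = 0$, iterate by downward induction on $k$ to give
$$V_k \;=\; \bigoplus_{j \geq k}\bigoplus_{x \in \mathcal{I}_j}\F_p \cdot x,$$
so that in particular the full collection $\{\F_p\cdot x : x \in \bigsqcup_{k}\mathcal{I}_k\}$ of fixed parts of the $\langle \alpha_x\rangle$ is $\F_p$-independent in $W^G = V_1$. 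The Exclusion Lemma then gives
$$W' \;:=\; \sum_{k, x}\langle \alpha_x\rangle_{\F_p[G]} \;=\; \bigoplus_{k=1}^{p^n}\bigoplus_{x \in \mathcal{I}_k}\langle \alpha_x\rangle_{\F_p[G]}.$$

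For the second stage, I would show $W = W'$ by induction on $\ell(w)$ for $w \in W$. If $\ell(w) \leq 1$ then $w \in W^G$, and the identity $x = (\sigma-1)^{k-1}\alpha_x \in \langle \alpha_x\rangle \subseteq W'$ for each $x \in \mathcal{I}_k$ puts $w \in W'$. If $\ell(w) > 1$, then $(\sigma-1)^{\ell(w)-1}w \in V_{\ell(w)}$ admits an expansion
$$(\sigma-1)^{\ell(w)-1}w \;=\; \sum_{j \geq \ell(w)}\sum_{x \in \mathcal{I}_j} c_{j,x}\,(\sigma-1)^{j-1}\alpha_x,$$
and setting $w' := \sum c_{j,x}\,(\sigma-1)^{j-\ell(w)}\alpha_x$ (well defined since $j \geq \ell(w)$), one obtains $w' \in W'$ with $(\sigma-1)^{\ell(w)-1}(w - w') = 0$. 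Hence $\ell(w-w') < \ell(w)$, and the inductive hypothesis places $w - w'$, and therefore $w$, in $W'$.

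The argument is essentially bookkeeping around the filtration; the only mildly delicate point is the inductive unfolding of the complementary bases to yield $V_k = \bigoplus_{j \geq k}\bigoplus_{x \in \mathcal{I}_j}\F_p \cdot x$, but this is a short downward induction on $k$ starting from the base case $V_{p^n} = \bigoplus_{x \in \mathcal{I}_{p^n}}\F_p\cdot x$.
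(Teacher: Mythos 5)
Your proposal is correct and follows essentially the same approach as the paper's proof: both use the Exclusion Lemma to get directness from the $\F_p$-independence of the fixed parts $x = (\sigma-1)^{k-1}\alpha_x$, and both prove $W = W'$ by induction on length, subtracting off the appropriate combination $\sum c_{j,x}(\sigma-1)^{j-\ell(w)}\alpha_x$. Your version is slightly more explicit in verifying $\ell(\alpha_x) = k$ and unfolding the filtration to get $V_k = \bigoplus_{j \geq k}\bigoplus_{x \in \mathcal{I}_j}\F_p\cdot x$, but the substance is identical.
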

\begin{proof}
By construction, for each generator $\alpha_x$ we have $\langle \alpha_x \rangle^G = \langle x \rangle$.  Since $\cup_i \mathcal{I}_i$ consists of $\F_p$-independent elements, the Exclusion Lemma shows that \begin{equation}\label{eq:proposed.sum}\sum_k \sum_{x \in \mathcal{I}_k} \langle \alpha_x \rangle_{\F_p[G]} = \bigoplus_k \bigoplus_{x \in \mathcal{I}_k} \langle \alpha_x \rangle_{\F_p[G]}.\end{equation}  To see that this sum is all of $W$, note first that by construction it contains all elements in $W^G$, and hence all elements of length $1$.

Assume that the sum contains all elements of length $\ell$.  For a given element $\gamma$ with $\ell(\gamma) = \ell+1$, we know that $f = (\sigma-1)^{\ell(\gamma)-1}\gamma \in V_{\ell+1}$.  Hence we may write $$f = \sum_{k \geq \ell+1} \sum_{x \in \mathcal{I}_k} c_x x.$$ Then we have $$(\sigma-1)^{\ell}\left(\gamma - \sum_{k \geq \ell+1} \sum_{x \in \mathcal{I}_k} c_x(\sigma-1)^{k-\ell-1}\alpha_x\right) = f -\sum_{k\geq \ell+1}\sum_{x \in \mathcal{I}_k} c_x x = 0.$$  It follows that $\gamma - \sum_{k \geq \ell+1} \sum_{x \in \mathcal{I}_k} c_x(\sigma-1)^{k-\ell-1}\alpha_x$ has length at most $\ell$ and is therefore in (\ref{eq:proposed.sum}) by induction.  Since it is obvious that the term $\sum_{k \geq \ell+1} \sum_{x \in \mathcal{I}_k} c_x(\sigma-1)^{k-\ell-1}\alpha_x$ is in (\ref{eq:proposed.sum}), so too is $\gamma$.
\end{proof}

As a final note on module structures, we point out that the result above can be used to show that all indecomposable $\F_p[G]$-modules are cyclic: non-cyclic modules $W$ can be shown to satisfy $\dim_{\F_p}W^G > 1$, and so the above recipe produces a nontrivial decomposition.  This fact can in turn be used to show that the decomposition of an $\F_p[G]$-module $W$ is essentially unique, in the sense that it determines the indecomposable types which appear in an $\F_p[G]$-decomposition of $W$, together with their multiplicities.  This is recorded in the following

\begin{corollary}\label{cor:general.decomposition}
For an $\F_p[G]$-module $W$, suppose $W = \oplus_{\alpha \in \mathcal{A}} W_\alpha$ where each $W_\alpha$ is indecomposable.  Then $\mathcal{A}$ is a disjoint union of subsets $\mathcal{A}_1, \mathcal{A}_2, \cdots, \mathcal{A}_{p^n}$ where
\begin{itemize}
\item $|\mathcal{A}_i|$ is the codimension of $V_{i+1}$ within $V_i$, and
\item for each $\alpha \in \mathcal{A}_i$ there is an $\F_p[G]$-isomorphism $W_{\alpha}\simeq A_i$.
\end{itemize}
\end{corollary}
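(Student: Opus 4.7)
The plan is to reduce the corollary to two ingredients: (i) every indecomposable $\F_p[G]$-module is isomorphic to some $A_i$, as flagged in the remark preceding the statement, and (ii) the sequence of dimensions $\dim_{\F_p} V_i$ completely determines the multiplicities of the $A_i$'s in any indecomposable decomposition.

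For (i), I would observe that the decomposition produced by Corollary \ref{cor:choose.your.decomposition} has exactly $\sum_{k} |\mathcal{I}_k| = \dim_{\F_p} V_1 = \dim_{\F_p} W^G$ summands, because the $\mathcal{I}_k$ are bases for successive complements in the filtration $V_{p^n} \subseteq \cdots \subseteq V_1$ and hence telescope. So if $W$ is indecomposable then $\dim_{\F_p} W^G = 1$, and in that case Corollary \ref{cor:choose.your.decomposition} presents $W$ as a single cyclic summand $\langle \alpha_x \rangle_{\F_p[G]} \simeq A_{\ell(\alpha_x)}$. Applied to each $W_\alpha$ in the given decomposition, this yields $W_\alpha \simeq A_{j_\alpha}$ for some $j_\alpha \in \{1,\ldots,p^n\}$, so I can define
$$\mathcal{A}_i := \{\alpha \in \mathcal{A} : W_\alpha \simeq A_i\},$$
which immediately delivers the second bullet.

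For (ii), a direct computation in $A_j = \F_p[G]/(\sigma-1)^j$ shows that $A_j^G$ is the one-dimensional $\F_p$-span of the class of $(\sigma-1)^{j-1}$, and that this class lies in the image of $(\sigma-1)^{i-1}$ precisely when $i \leq j$. Hence $\dim_{\F_p} V_i(A_j) = 1$ for $i \leq j$ and $0$ otherwise. Both $\im((\sigma-1)^{i-1})$ and the fixed-point functor $(-)^G$ commute with direct sums, so $V_i(W) = \bigoplus_{\alpha} V_i(W_\alpha)$, giving
$$\dim_{\F_p} V_i(W) = \#\{\alpha \in \mathcal{A} : j_\alpha \geq i\}.$$
Subtracting the value at $i+1$ from the value at $i$ yields $\dim_{\F_p} V_i - \dim_{\F_p} V_{i+1} = |\mathcal{A}_i|$, which is the first bullet.

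There is no serious obstacle; the argument is essentially bookkeeping built on Corollary \ref{cor:choose.your.decomposition} together with the elementary structure of $A_j$. The one conceptual point worth emphasizing is that the filtration $V_{p^n} \subseteq \cdots \subseteq V_1 = W^G$ is intrinsic to the $\F_p[G]$-module $W$, being defined as $V_i = \im((\sigma-1)^{i-1}) \cap W^G$ without reference to any decomposition; this is what forces the multiplicities $|\mathcal{A}_i|$ to be invariants rather than artifacts of a particular choice of summands.
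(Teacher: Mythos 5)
Your argument is correct and fills in precisely the sketch the paper gives in the paragraph preceding the corollary: Corollary \ref{cor:choose.your.decomposition} forces every indecomposable to be cyclic (hence $\simeq A_j$ for some $j$), and the intrinsic filtration $V_i = \im((\sigma-1)^{i-1}) \cap W^G$ computed summand-by-summand pins down the multiplicities. This is the intended proof; the telescoping count $\sum_k |\mathcal{I}_k| = \dim_{\F_p} W^G$ and the observation that $V_i$ commutes with direct sums are exactly the bookkeeping the paper leaves to the reader.
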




\section{The Submodule $\Gamma(m,n)$}\label{sec:Gamma}

As Corollary \ref{cor:general.decomposition} suggests, the driving force in determining a decomposition of $k_mE$ involves understanding the submodule $(k_mE_n)^G \subseteq  k_mE_n$, particularly the filtration \begin{equation*}\begin{split}
V_{p^n} \subseteq  V_{p^n-1} \subseteq  \cdots \subseteq  V_2 \subseteq  V_1 = k_mE_n^G,\end{split}\end{equation*}
where $V_i := \im((\sigma-1)^{i-1})\cap (k_mE_n)^G$.  In the case that $n=1$, the authors of \cite{LMS3} were able to control this filtration by carefully studying the interplay between $\ker(\iota_{E/F})$ and $\im(N_{E/F})$.  In particular, they showed that elements in $\ker(N_{E/F})$ had particularly nice module-theoretic properties, and that ``most'' other elements came from the submodule $\ker(\iota_{E/F} \circ N_{E/F})$.  The challenge was then to construct a submodule $X \subseteq  k_mE$ that was sufficiently ``small'' and so that $N_{E/F}(X) = \ker(\iota_{E/F})$.  This submodule $X$ could then be used to control the module-theoretic properties of other elements in $k_mE$, thus forcing the resulting stratified decomposition.

Our approach will take this same basic strategy, though we will focus much of our attention on the subextension $E_n/E_{n-1}$ and its associated inclusion and norm maps.  We will start by giving an in-depth study of the module structure of $\ker(\iota_{n-1}^n)$.  We point out that the results of this section do not use the embeddability of $E/F$; instead, we only use the fact that $\Gal(E/F) \simeq \Z/p^n\Z$ and that $\xi_p \in E$, where $p>2$ is a prime.

Exact Sequence (\ref{eq:kummer.for.k.theory}) tells us that $$\ker(k_mE_{n-1} \xrightarrow{~\iota^n_{n-1}} k_mE_n) = \{a_{n-1}\}\cdot k_{m-1}E_{n-1},$$ where $a_{n-1}$ has $E_{n-1}(\root{p}\of{a_{n-1}}) = E_n$ and satisfies conditions (\ref{eq:normcompatibility}) and (\ref{eq:ai.are.fixed}).  
Furthermore, we know that \begin{equation}\label{eq:kernel.of.mult.by.a.is.norm}\ker(k_{m-1}E_{n-1} \xrightarrow{\{a_{n-1}\}\cdot-}k_mE_{n-1}) = N^n_{n-1}(k_{m-1}E_n).\end{equation}  Hence to understand $\ker(\iota_{n-1}^n)$, we must find a complement $\Gamma(m,n)$ to $N_{n-1}^n(k_{m-1}E_n)$ in $k_{m-1}E_{n-1}$.  The main result of this section is finding this complement, as recorded in the following

\begin{proposition}\label{prop:Gamma module}
There exists a submodule $\Gamma(m,n) \subseteq  k_{m-1}E_{n-1}$ such that
\begin{enumerate}
\item\label{it:i}  $\Gamma(m,n) = \oplus_{i = 0}^{n-1} \mathcal{Z}_i$ where each $\mathcal{Z}_i \subseteq  \iota^{n-1}_i(k_{m-1}E_i)$ is a direct sum of free $\F_p[G_i]$-modules, and $\mathcal{Z}_i^{G_{n-1}} \subseteq  \iota^{n-1}_0\left(N^i_0\left(k_{m-1}E_i\right)\right)$;
\item\label{it:ii} $\Gamma(m,n) \oplus N^n_{n-1}(k_{m-1}E_n) = k_{m-1}E_{n-1}$;
\item\label{it:iii} $\{a_{n-1}\}\cdot k_{m-1}E_{n-1} = \{a_{n-1}\}\cdot \Gamma(m,n)$; and \item\label{it:iv} as $\F_p[G_{n-1}]$-modules, $\Gamma(m,n)\simeq \{a_{n-1}\}\cdot \Gamma(m,n)$ under the map $\gamma \mapsto \{a_{n-1}\}\cdot \gamma$.
\end{enumerate}
\end{proposition}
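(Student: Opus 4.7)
The plan starts by observing that properties (\ref{it:iii}) and (\ref{it:iv}) are essentially consequences of (\ref{it:ii}). Since $a_{n-1}$ becomes a $p$-th power in $E_n = E_{n-1}(\sqrt[p]{a_{n-1}})$, the inclusion $\iota_{n-1}^n$ annihilates $\{a_{n-1}\}$, and the projection formula (\ref{eq:projection.formula}) gives $\{a_{n-1}\}\cdot N_{n-1}^n(\beta) = N_{n-1}^n(\iota_{n-1}^n(\{a_{n-1}\})\cdot\beta) = 0$ for every $\beta \in k_{m-1}E_n$. Combining this with (\ref{it:ii}) yields (\ref{it:iii}) by distributing $\{a_{n-1}\}\cdot -$ over the direct sum. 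For (\ref{it:iv}), Kummer theory (\ref{eq:kummer.for.k.theory}) identifies the kernel of $\{a_{n-1}\}\cdot -$ on $k_{m-1}E_{n-1}$ as $N_{n-1}^n(k_{m-1}E_n)$; by (\ref{it:ii}) this kernel meets $\Gamma(m,n)$ trivially, so $\{a_{n-1}\}\cdot -$ restricts to an $\F_p[G_{n-1}]$-module isomorphism from $\Gamma(m,n)$ onto $\{a_{n-1}\}\cdot\Gamma(m,n)$.

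For the construction of $\Gamma(m,n)$ itself, the crucial input is the identity $\iota_0^i \circ N_0^i \equiv (\sigma-1)^{p^i-1}$ on $k_{m-1}E_i$ recorded at the end of Section \ref{sec:preliminaries}. Given any nonzero $x \in \iota_0^{n-1}(N_0^i(k_{m-1}E_i))$, say $x = \iota_0^{n-1}(N_0^i(y))$ for some $y \in k_{m-1}E_i$, the lift $\alpha_x := \iota_i^{n-1}(y)$ satisfies $(\sigma-1)^{p^i-1}\alpha_x = x$ by the displayed identity together with the $G_{n-1}$-equivariance of $\iota_i^{n-1}$. Because $\alpha_x$ comes from $k_{m-1}E_i$ it is annihilated by $(\sigma-1)^{p^i}$, and the nonvanishing of $x$ forces the cyclic $\F_p[G_{n-1}]$-module $\langle\alpha_x\rangle$ to have length exactly $p^i$; since the $G_{n-1}$-action factors through $G_i$, this makes $\langle\alpha_x\rangle$ a free $\F_p[G_i]$-module of rank one. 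Hence, after choosing an $\F_p$-basis for a subspace $\mathcal{Z}_i^{G_{n-1}} \subseteq \iota_0^{n-1}(N_0^i(k_{m-1}E_i))$, declaring $\mathcal{Z}_i$ to be the sum of the cyclic submodules generated by the corresponding lifts automatically satisfies (\ref{it:i}).

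To achieve (\ref{it:ii}), I would make these fixed-subspace choices compatibly using the nested chain $\mathcal{F}_i := \iota_0^{n-1}(N_0^i(k_{m-1}E_i))$, which satisfies $\mathcal{F}_0 \supseteq \mathcal{F}_1 \supseteq \cdots \supseteq \mathcal{F}_{n-1}$ by the transitivity $N_0^i = N_0^j \circ N_j^i$. Proceeding downward from $i=n-1$, pick $\mathcal{Z}_i^{G_{n-1}}$ as an $\F_p$-complement inside $\mathcal{F}_i$ of $\mathcal{F}_{i+1} + (N_{n-1}^n(k_{m-1}E_n) \cap \mathcal{F}_i)$ (with the convention $\mathcal{F}_n := 0$). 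By construction the subspaces $\mathcal{Z}_0^{G_{n-1}}, \ldots, \mathcal{Z}_{n-1}^{G_{n-1}}$ together with $N_{n-1}^n(k_{m-1}E_n)^{G_{n-1}}$ are $\F_p$-independent, and their sum equals $\mathcal{F}_0 + N_{n-1}^n(k_{m-1}E_n)^{G_{n-1}}$. The Exclusion Lemma then promotes this to $\F_p[G_{n-1}]$-independence of the submodules $\mathcal{Z}_i$ from each other and from $N_{n-1}^n(k_{m-1}E_n)$, giving the direct-sum half of (\ref{it:ii}).

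The main obstacle is the final surjectivity step: showing that $\Gamma(m,n) + N_{n-1}^n(k_{m-1}E_n)$ is all of $k_{m-1}E_{n-1}$. By an argument in the spirit of Corollary \ref{cor:choose.your.decomposition}, this reduces to two claims at the level of fixed points and filtrations, namely that every $G_{n-1}$-fixed element of $k_{m-1}E_{n-1}$ lies in $\iota_0^{n-1}(k_{m-1}F) + N_{n-1}^n(k_{m-1}E_n)^{G_{n-1}}$, and that $N_{n-1}^n(k_{m-1}E_n)$ absorbs all indecomposable summands of $k_{m-1}E_{n-1}$ whose length is not a power of $p$. I expect both to follow from iterated applications of Hilbert 90 for $K$-theory (Proposition \ref{prop:hilbert.90.for.k.theory}) along the tower $F \subset E_1 \subset \cdots \subset E_n$, combined with the Bloch--Kato identification to transfer fixed-point information between reduced Milnor $K$-theory and Galois cohomology; this will be the main technical challenge and should rest on refinements of the norm-image analysis developed in \cite{LMS3}.
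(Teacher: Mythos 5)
Your reduction of (\ref{it:iii}) and (\ref{it:iv}) to (\ref{it:ii}) is correct and matches the paper, and your construction of each $\mathcal{Z}_i$ as a direct sum of rank-one free $\F_p[G_i]$-modules by lifting a fixed element $x=\iota_0^{n-1}(N_0^i(y))$ to $\alpha_x=\iota_i^{n-1}(y)$ is sound: $(\sigma-1)^{p^i-1}\alpha_x=x$ and $(\sigma-1)^{p^i}\alpha_x=0$, so $\langle\alpha_x\rangle$ is free over $\F_p[G_i]$. But you have not proven the proposition. The surjectivity of $\Gamma(m,n)+N^n_{n-1}(k_{m-1}E_n)\hookrightarrow k_{m-1}E_{n-1}$, which you correctly flag as the main obstacle, is the entire content of the result, and an appeal to ``iterated Hilbert 90 along the tower'' does not supply it. Your two reductions are also not quite sufficient even if granted: to conclude via Corollary \ref{cor:choose.your.decomposition} that $\Gamma(m,n)\oplus W = k_{m-1}E_{n-1}$ with $W:=N^n_{n-1}(k_{m-1}E_n)$, you would need the full filtration $V_\ell=\im(\sigma-1)^{\ell-1}\cap(k_{m-1}E_{n-1})^{G_{n-1}}$ to agree with the corresponding filtration of $\Gamma(m,n)\oplus W$ for every $\ell$, not merely the $\ell=1$ statement plus a claim about where non-prime-power summands sit.

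The paper closes this gap by a mechanism you do not reproduce: a simultaneous induction on $m$ and $n$ interleaved with Theorem~\ref{th:embeddable.decomposition} itself. Concretely, since $a_{n-1}$ is exceptional for $E_{n-1}/F$ (Proposition~\ref{prop:exceptional.element.criteria}), Theorem~\ref{th:embeddable.decomposition} applied to the shorter tower $E_{n-1}/F$ furnishes a decomposition $k_{m-1}E_{n-1}=\bigoplus_i\mathcal{X}_i\oplus\bigoplus_i\mathcal{Y}_i$. Then the inductive hypothesis $\Gamma(m-1,n)\subseteq k_{m-2}E_{n-1}$, one degree lower in $K$-theory, drives Lemma~\ref{le:preliminaryformathcalK}, which allows Lemma~\ref{le:choosing.decomposition.wisely} to split $\mathcal{Y}_{n-1}=\mathcal{K}\oplus\mathcal{N}\oplus\hat{\mathcal{Y}}_{n-1}$ so that $\mathcal{K}\subseteq\ker\iota^n_{n-1}$ and $\mathcal{N}\subseteq N^n_{n-1}(k_{m-1}E_n)$. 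Lemma~\ref{le:ker} and the lemma following it then identify $\ker\iota^n_{n-1}=\bigoplus\mathcal{X}_i\oplus\mathcal{K}$ and $\im N^n_{n-1}=\bigoplus\mathcal{X}_i\oplus\mathcal{K}\oplus\mathcal{N}$ exactly, so that $\Gamma(m,n):=\mathcal{Y}_0\oplus\cdots\oplus\mathcal{Y}_{n-2}\oplus\hat{\mathcal{Y}}_{n-1}$ is a complementary collection of summands and (\ref{it:ii}) holds by construction. Without the input $\Gamma(m-1,n)$ at lower $m$ and Theorem~\ref{th:embeddable.decomposition} at lower $n$, the spanning step you left open cannot be closed, so your sketch is missing the proof's essential idea rather than merely its details.
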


A few remarks are in order.  First, the uniqueness of an $\F_p[G_{n-1}]$-decomposition of $k_{m-1}E_{n-1}$ implies that all complements to $N_{n-1}^n(k_{m-1}E)$ are isomorphic as $\F_p[G_{n-1}]$-modules, provided that a complement to $N_{n-1}^n(k_{m-1}E_n)$ exists.  Furthermore, properties (\ref{it:iii}) and (\ref{it:iv}) follow directly from property (\ref{it:ii}): the former because of Equation (\ref{eq:kernel.of.mult.by.a.is.norm}), and the latter because $a_{n-1}$ has trivial $G_{n-1}$-action (condition (\ref{eq:ai.are.fixed})).  Hence the content of this theorem is in showing that $N_{n-1}^n(k_{m-1}E_n)$ is a summand of $k_{m-1}E_{n-1}$, and that this latter module is appropriately stratified.




To prove this result, we shall use induction.  For our base cases, suppose that either $n=1$ or $m=1$.  In either case $N_{n-1}^n(k_{m-1}E_n)$ is a submodule of the trivial $\F_p[G_{n-1}]$-module $k_{m-1}E_{n-1}$, and hence has a complement which is also trivial as an $\F_p[G_{n-1}]$-module.  Such a complement obviously satisfies condition \ref{it:i}. 

Suppose then that $m,n>1$, and assume by induction the existence of a submodule $\Gamma(m-1,n) \subseteq  k_{m-2}E_{n-1}$ which satisfies the conclusions of Proposition \ref{prop:Gamma module}.

\begin{lemma}\label{le:preliminaryformathcalK}
For $\gamma \in \iota^{n-1}_0(k_{m-1}E_0)$ with $\iota^n_{n-1}(\gamma) = 0$, there exists $\alpha \in k_{m-1}E_{n-1}$ so that $\iota^{n-1}_{0}(N^{n-1}_0(\alpha)) = \gamma$ and $\iota^n_{n-1}(\alpha) = 0$.
\end{lemma}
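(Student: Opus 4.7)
The plan is to use Kummer theory for the degree-$p$ extension $E_n/E_{n-1}$ to recast the problem as a congruence modulo $N^n_{n-1}(k_{m-2}E_n)$, and then to solve this congruence by exploiting the inductively constructed structure of $\Gamma(m-1,n)$ together with the projection formula and Hilbert 90 for $K$-theory.

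First I would observe that Kummer theory for $E_n/E_{n-1}$ forces both $\gamma$ and every candidate $\alpha$ to lie in $\{a_{n-1}\}\cdot k_{m-2}E_{n-1}$: write $\gamma = \{a_{n-1}\}\cdot \beta$ and $\alpha = \{a_{n-1}\}\cdot \beta'$ with $\beta,\beta'\in k_{m-2}E_{n-1}$. Since $\{a_{n-1}\}$ is $G_{n-1}$-fixed by condition (\ref{eq:ai.are.fixed}), the operator identity $\iota^{n-1}_0 N^{n-1}_0 \equiv (\sigma-1)^{p^{n-1}-1}$ combined with the Kummer computation $\ker(\{a_{n-1}\}\cdot -) = N^n_{n-1}(k_{m-2}E_n)$ reduces the required identity $\iota^{n-1}_0 N^{n-1}_0(\alpha) = \gamma$ to the congruence
\[
\iota^{n-1}_0 N^{n-1}_0(\beta') \equiv \beta \pmod{N^n_{n-1}(k_{m-2}E_n)}.
\]
Using the inductive hypothesis, which provides the complement $\Gamma(m-1,n) = \bigoplus_i \mathcal{Z}_i$ to $N^n_{n-1}(k_{m-2}E_n)$, solving this congruence amounts to realizing the image $\bar\beta \in \Gamma(m-1,n)$ inside the image of $(\sigma-1)^{p^{n-1}-1}$.

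Second I would analyze $\bar\beta$ using the structure of $\Gamma(m-1,n)$ and the specific hypothesis that $\gamma$ comes from $E_0$. Applying $\sigma - 1$ to $\gamma = \{a_{n-1}\}\cdot\beta$ (with $\gamma$ fixed) shows $(\sigma-1)\beta \in N^n_{n-1}(k_{m-2}E_n)$, so $\bar\beta \in \Gamma(m-1,n)^{G_{n-1}} = \bigoplus_i \mathcal{Z}_i^{G_{n-1}}$. By property (\ref{it:i}) of the induction, I can write $\bar\beta_i = \iota^{n-1}_0(\nu_i)$ with $\nu_i \in N^i_0(k_{m-2}E_i)$. Applying $N^{n-1}_0$ to $\gamma = \{a_{n-1}\}\cdot \beta$ and using the projection formula together with $N^{n-1}_0(\iota^{n-1}_0(\gamma_0)) = p^{n-1}\gamma_0 = 0$ forces $\{a_0\}\cdot \sum_i \nu_i = 0$; Kummer for $E_1/E_0$ then places $\sum_i \nu_i \in N^1_0(k_{m-2}E_1)$, and I fix $\mu \in k_{m-2}E_1$ with $N^1_0(\mu) = \sum_i \nu_i$. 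In the base case $n=2$, setting $\beta' = \mu$ lets the operator identity $\iota^1_0 N^1_0 = (\sigma-1)^{p-1}$ directly verify the congruence, and then $\alpha = \{a_1\}\cdot \mu$ satisfies both conclusions.

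The main obstacle arises for $n \geq 3$: the naive candidate $\beta' = \iota^{n-1}_1(\mu)$ fails because $(\sigma-1)^{p^{n-1}-1}$ annihilates $\iota^{n-1}_1(k_{m-2}E_1)$, since the $G_{n-1}$-action on this inclusion image factors through $G_1$ and $(\bar\sigma-1)^p = 0$ in $\F_p[G_1]$ while $p^{n-1}-1 \geq p$. I therefore expect the most delicate part of the proof to be the construction of a genuinely higher-level lift of $\mu$ to $k_{m-2}E_{n-1}$: my plan is to climb the tower $E_1 \subseteq E_2 \subseteq \cdots \subseteq E_{n-1}$ one degree-$p$ step at a time, using Hilbert 90 for $K$-theory (Proposition \ref{prop:hilbert.90.for.k.theory}) to produce the required preimages at each step, and coordinating these lifts with the component decomposition $\Gamma(m-1,n) = \bigoplus_i \mathcal{Z}_i$ so that $(\sigma-1)^{p^{n-1}-1}\beta'$ lands precisely on $\bar\beta$ in $\Gamma(m-1,n)$. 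Once $\beta'$ is in hand, setting $\alpha = \{a_{n-1}\}\cdot \beta'$ and directly verifying that $\iota^n_{n-1}(\alpha) = 0$ (automatic from the form of $\alpha$) and $\iota^{n-1}_0 N^{n-1}_0(\alpha) = \gamma$ (by the computation above) completes the proof.
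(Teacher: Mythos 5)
The student's proposal handles the base case $n=2$ correctly and (up to a point) follows the same opening moves as the paper: write $\gamma = \{a_{n-1}\}\cdot\beta$ by Kummer, use the inductive complement $\Gamma(m-1,n)$ to place $\beta$ (up to irrelevance) in $\Gamma(m-1,n)^{G_{n-1}}$, and then try to show $\beta$ lies deep enough in the image of $(\sigma-1)$ so that $\alpha = \{a_{n-1}\}\cdot\beta'$ has the desired properties. For $n \geq 3$, however, the proposal has a real gap.

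Your plan to ``climb the tower $E_1 \subseteq E_2 \subseteq \cdots \subseteq E_{n-1}$ one step at a time using Hilbert 90'' is not on the right track, and the reason is that you took the wrong norm. Applying $N^{n-1}_0$ and the projection formula only yields $\{a_0\}\cdot\nu = 0$, hence $\nu \in N^1_0(k_{m-2}E_1)$ --- that is, the element $\nu$ living in $E_0$ is a norm from a degree-$p$ extension. This is much too weak: as you notice, any inclusion of $\mu \in k_{m-2}E_1$ into $k_{m-2}E_{n-1}$ is killed by $(\sigma-1)^p$ and cannot hit $(\sigma-1)^{p^{n-1}-1}\beta'$. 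Hilbert 90 (which describes $\ker N$ as $\im(\sigma-1)$) does not by itself upgrade a norm from $E_1$ to a norm from $E_{n-1}$, and iterating it up the tower would require precisely the kind of control you are trying to establish in the first place.

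The paper instead computes the norm \emph{only one step down}, i.e.\ $N^{n-1}_{n-2}(\gamma)$. Since $\gamma$ comes from $E_0 \subseteq E_{n-2}$ this norm vanishes, while the projection formula evaluates it as $\{a_{n-2}\}\cdot\hat g$ where $g = \iota^{n-1}_{n-2}(\hat g)$. Kummer theory for $E_{n-1}/E_{n-2}$ then gives $\hat g \in N^{n-1}_{n-2}(k_{m-2}E_{n-1})$, which forces $g = \iota^{n-1}_{n-2}(\hat g) \in \im\bigl((\sigma-1)^{p^{n-1}-p^{n-2}}\bigr)$. This is the strong conclusion you lack. The second ingredient you omit entirely is the stratified shape of $\Gamma(m-1,n)$: it is a direct sum of cyclic $\F_p[G_{n-1}]$-modules of lengths $p^0, p^1, \dots, p^{n-1}$, and a $G_{n-1}$-fixed element of $\Gamma(m-1,n)$ that lies in $\im\bigl((\sigma-1)^{p^{n-1}-p^{n-2}}\bigr)$ must sit in the length-$p^{n-1}$ component $\mathcal{Z}_{n-1}$ (since $p^{n-1}-p^{n-2} \geq p^{n-2}$), and therefore actually lies in $\im\bigl((\sigma-1)^{p^{n-1}-1}\bigr)$. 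Writing $g = (\sigma-1)^{p^{n-1}-1}\alpha'$ and setting $\alpha = \{a_{n-1}\}\cdot\alpha'$ then finishes the proof for all $n$ at once, with no need for a separate tower-climbing argument.
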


\begin{proof}
By Exact Sequence (\ref{eq:kummer.for.k.theory}) we have $\gamma = \{a_{n-1}\}\cdot g$ for some $g \in k_{m-2}E_{n-1}$.  Proposition \ref{prop:Gamma module} says that we may take $$g \in \Gamma(m-1,n)^{G_{n-1}} \subseteq  \iota_0^{n-1}(k_{m-2}E_0) \subseteq  \iota_{n-2}^{n-1}(k_{m-2}E_{n-2}),$$ say $g = \iota_{n-2}^{n-1}(\hat g)$.  
We now compute $N_{n-2}^{n-1}(\gamma)$ in two ways.  On the one hand, since $\gamma \in \im(\iota_{0}^{n-1})$ we have $N_{n-2}^{n-1}(\gamma) = 0$.  On the other hand, since $\gamma = \{a_{n-1}\}\cdot \iota_{n-2}^{n-1}(\hat g)$, the Projection Formula (\ref{eq:projection.formula}) gives $$0=N^{n-1}_{n-2} (\gamma) = N^{n-1}_{n-2} ( \{a_{n-1}\}\cdot g) = \{a_{n-2}\}\cdot \hat g.$$

By Exact Sequence (\ref{eq:kummer.for.k.theory}) we conclude that $\hat g \in N^{n-1}_{n-2} \left(k_{m-2}E_{n-1}\right)$, and therefore $\iota_{n-2}^{n-1}(\hat g)$ is in the image of $(\sigma-1)^{p^{n-1}-p^{n-2}}$.  This shows $g$ lies in the fixed part of a submodule of $k_{m-2}E_{n-1}$ of length at least $p^{n-1}-p^{n-2} + 1 > p^{n-2}$.  Since by induction $\Gamma(m-1,n)$ is a direct sum of free $\F_p[G_{i}]$-submodules for $0 \leq i \leq n-1$, $g =(\sigma-1)^{p^{n-1}-1}(\alpha')$ for some $\alpha' \in k_{m-2}E_{n-1}$.  Letting $\alpha = \{a_{n-1}\}\cdot \alpha'$ we have $\iota^n_{n-1}(\alpha) = 0$ and
\begin{equation*}\begin{split}
\iota^{n-1}_0(N^{n-1}_0(\alpha)) &= (\sigma-1)^{p^{n-1}-1} (\alpha) = (\sigma-1)^{p^{n-1}-1}(\{a_{n-1}\}\cdot \alpha')) \\&=\{a_{n-1}\}\cdot (\sigma-1)^{p^{n-1}-1}(\alpha') = \{a_{n-1}\}\cdot g  = \gamma,
\end{split}\end{equation*} as desired.
\end{proof}

\begin{lemma}\label{le:choosing.decomposition.wisely}
There exists a module decomposition $$k_{m-1} E_{n-1} = \mathcal{X}_0 \oplus \cdots \oplus \mathcal{X}_{n-2} \oplus \mathcal{Y}_0 \oplus \cdots \oplus \mathcal{Y}_{n-1}$$ satisfying the conditions of
Theorem \ref{th:embeddable.decomposition}, and with the properties
\begin{itemize}
\item $\mathcal{X}_i \subseteq  \{a_{n-1}\}\cdot k_{m-2}E_{n-1}$ for each $i$, and
\item $\mathcal{Y}_{n-1} = \mathcal{K} \oplus \mathcal{N} \oplus \hat{\mathcal{Y}}_{n-1}$, where each of these submodules is free over $\F_p[G_{n-1}]$, and so that
\begin{enumerate}
\item $\mathcal{K} \subseteq  \ker \iota^n_{n-1}$ and
\item $\mathcal{N} \subseteq  N^n_{n-1}(k_{m-1}E_n)$.
\end{enumerate}
\end{itemize}
\end{lemma}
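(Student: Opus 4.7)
The plan is to construct the decomposition in two stages: first decompose $\ker \iota^n_{n-1}$ using the inductive hypothesis (Proposition~\ref{prop:Gamma module} at degree $m-1$), and then stratify a complement via the filtration machinery of Corollary~\ref{cor:choose.your.decomposition}, using Lemma~\ref{le:preliminaryformathcalK} to produce cyclic generators of the right form.

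For the first stage, Kummer theory (Exact Sequence~\eqref{eq:kummer.for.k.theory}) identifies $\ker \iota^n_{n-1} = \{a_{n-1}\}\cdot k_{m-2}E_{n-1}$. Invoking Proposition~\ref{prop:Gamma module} inductively at $(m-1,n)$, write $k_{m-2}E_{n-1} = \Gamma(m-1,n) \oplus N^n_{n-1}(k_{m-2}E_n)$ with $\Gamma(m-1,n) = \bigoplus_{i=0}^{n-1} \mathcal{Z}'_i$, each $\mathcal{Z}'_i$ a free $\F_p[G_i]$-module contained in $\iota^{n-1}_i(k_{m-2}E_i)$. Multiplication by $\{a_{n-1}\}$ annihilates $N^n_{n-1}(k_{m-2}E_n)$ by~\eqref{eq:kummer.for.k.theory}, and is an $\F_p[G_{n-1}]$-module isomorphism on $\Gamma(m-1,n)$ by property~(\ref{it:iv}) of Proposition~\ref{prop:Gamma module}; hence
$$\ker \iota^n_{n-1} = \bigoplus_{i=0}^{n-1} \{a_{n-1}\}\cdot \mathcal{Z}'_i.$$
I will then declare $\mathcal{X}_i := \{a_{n-1}\}\cdot \mathcal{Z}'_i$ for $0 \leq i \leq n-2$ and $\mathcal{K} := \{a_{n-1}\}\cdot \mathcal{Z}'_{n-1}$, so that each $\mathcal{X}_i$ is a direct sum of dimension-$p^i$ indecomposables lying in $\{a_{n-1}\}\cdot \iota^{n-1}_i(k_{m-2}E_i)$ and $\mathcal{K}$ is free over $\F_p[G_{n-1}]$ and contained in $\ker \iota^n_{n-1}$.

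For the second stage, I apply Corollary~\ref{cor:choose.your.decomposition} to $k_{m-1}E_{n-1}$ with the filtration $V_k = (\sigma-1)^{k-1}(k_{m-1}E_{n-1}) \cap (k_{m-1}E_{n-1})^{G_{n-1}}$, choosing the bases $\mathcal{I}_{p^i}$ of complements of $V_{p^i+1}$ in $V_{p^i}$ so as to respect the stratification above. At each intermediate level $k = p^i$ ($0 \leq i < n-1$), the fixed parts $\mathcal{X}_i^{G_{n-1}}$ from Stage~1 already occupy part of $V_{p^i}$; extend to a full basis using elements from $V_{p^i} \cap \iota^{n-1}_i(k_{m-1}E_i)$ and lift each new element to a length-$p^i$ cyclic generator inside $\iota^{n-1}_i(k_{m-1}E_i)$, assembling $\mathcal{Y}_i$. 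At the top level $k = p^{n-1}$, partition the complement of $\mathcal{K}^{G_{n-1}}$ in $V_{p^{n-1}}$ into first a basis inside $(N^n_{n-1}(k_{m-1}E_n))^{G_{n-1}}$ (lifted to length-$p^{n-1}$ cyclic generators inside $N^n_{n-1}(k_{m-1}E_n)$, giving $\mathcal{N}$) and then an arbitrary basis of a further complement (lifted for $\hat{\mathcal{Y}}_{n-1}$). The Exclusion Lemma~\ref{le:exclusion} then supplies $\F_p[G_{n-1}]$-independence of the resulting cyclic submodules, and Corollary~\ref{cor:choose.your.decomposition} ensures their sum is all of $k_{m-1}E_{n-1}$.

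The main technical obstacle will be producing the required length-$p^{n-1}$ lifts inside the subspaces $\ker \iota^n_{n-1}$ and $N^n_{n-1}(k_{m-1}E_n)$, and relatedly confirming that no non-power-of-$p$ indecomposables appear in the complement (so that $V_{k+1} = V_k$ away from $k = p^i$). For $\mathcal{K}$, this is where Lemma~\ref{le:preliminaryformathcalK} plays its decisive role: any fixed element coming from $\iota^{n-1}_0(k_{m-1}E_0)$ in $\ker \iota^n_{n-1}$ admits a lift of length $p^{n-1}$ still inside $\ker \iota^n_{n-1}$, which is exactly what is needed to identify the generators of $\mathcal{K}$ at the top filtration level. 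A parallel projection-formula argument (using that $\iota^n_{n-1}\circ N^n_{n-1}$ acts as $(\sigma-1)^{p^n-p^{n-1}}$) will lift fixed elements in the norm image to length-$p^{n-1}$ generators still in $N^n_{n-1}(k_{m-1}E_n)$, producing $\mathcal{N}$. With these lifts in hand, the stratified basis selection and the Exclusion Lemma assemble the desired direct-sum decomposition with all the required containments.
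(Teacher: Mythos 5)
Your Stage~1 idea---identify $\ker\iota^n_{n-1}$ with $\{a_{n-1}\}\cdot\Gamma(m-1,n)$ and set $\mathcal{X}_i:=\{a_{n-1}\}\cdot\mathcal{Z}'_i$, $\mathcal{K}:=\{a_{n-1}\}\cdot\mathcal{Z}'_{n-1}$---is appealing, but the way you use it in Stage~2 has a genuine gap. When you invoke Corollary~\ref{cor:choose.your.decomposition}, you need the fixed subspaces $\mathcal{X}_i^{G_{n-1}}$ to live at filtration level \emph{exactly} $p^i$ in $k_{m-1}E_{n-1}$, i.e., $\mathcal{X}_i^{G_{n-1}}\cap V_{p^i+1}=\{0\}$; otherwise their elements cannot serve as part of a basis $\mathcal{I}_{p^i}$ for a complement of $V_{p^i+1}$ in $V_{p^i}$, and the cyclic modules you lift will not split off as direct summands. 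You never verify this. What you do know is that $\mathcal{X}_i^{G_{n-1}}\subseteq V_{p^i}$ (since $a_{n-1}$ is $G_{n-1}$-fixed and $\mathcal{Z}'_i$ is free over $\F_p[G_i]$), but nothing prevents these elements from lying even deeper. Establishing that $\mathcal{X}_i^{G_{n-1}}\cap V_{p^i+1}=\{0\}$ amounts to showing that $\ker\iota^n_{n-1}$ is a pure submodule of $k_{m-1}E_{n-1}$, which in the paper is a \emph{consequence} of this lemma via Lemma~\ref{le:ker}---so using it here is circular.

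The paper avoids this entirely by running the construction in the opposite direction: it begins with a decomposition $k_{m-1}E_{n-1}=\mathcal{X}_0\oplus\cdots\oplus\mathcal{X}_{n-2}\oplus\mathcal{Y}_0\oplus\cdots\oplus\mathcal{Y}_{n-1}$ that already satisfies Theorem~\ref{th:embeddable.decomposition}, supplied by the inductive hypothesis applied to $E_{n-1}/F$ at cohomological degree $m-1$ (with exceptional element $a_{n-1}$, via Proposition~\ref{prop:exceptional.element.criteria}). Since the $\mathcal{X}_i$ and $\mathcal{Y}_i$ for $i<n-1$ are already summands in correct position, the only freedom that needs to be exercised is in $\mathcal{Y}_{n-1}$, whose fixed part is the deepest filtration layer $V_{p^{n-1}}=\iota^{n-1}_0(N^{n-1}_0(k_{m-1}E_{n-1}))$. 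Corollary~\ref{cor:choose.your.decomposition} permits choosing \emph{any} $\F_p$-basis of this layer with arbitrary length-$p^{n-1}$ lifts; the paper partitions the basis into $\mathcal{I}_K\cup\mathcal{I}_N\cup\hat{\mathcal{I}}$ and uses Lemma~\ref{le:preliminaryformathcalK} to lift $\mathcal{I}_K$ inside $\ker\iota^n_{n-1}$ and norm images to lift $\mathcal{I}_N$ inside $N^n_{n-1}(k_{m-1}E_n)$. Your proposal never invokes the inductive hypothesis of Theorem~\ref{th:embeddable.decomposition}, which is precisely what the lemma's statement (``satisfying the conditions of Theorem~\ref{th:embeddable.decomposition}'') is asking you to carry. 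A secondary inconsistency: you build $\mathcal{K}$ in Stage~1 as $\{a_{n-1}\}\cdot\mathcal{Z}'_{n-1}$, but in your closing paragraph you say $\mathcal{K}$ is produced via Lemma~\ref{le:preliminaryformathcalK}---these are different constructions, and only the latter (which is the paper's) guarantees compatibility with the ambient decomposition at the top filtration level.
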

\begin{proof}
We shall let our decomposition come from an arbitrary decomposition $\mathcal{X}_0 \oplus \cdots \oplus \mathcal{X}_{n-2} \oplus \mathcal{Y}_0\oplus \cdots \oplus \mathcal{Y}_{n-1}$ of $k_{m-1}E_{n-1}$ provided by induction, subject to a few conditions on $\mathcal{X}$ and $\mathcal{Y}$ we are free to impose. First, Proposition \ref{prop:exceptional.element.criteria} gives that $a_{n-1}$ is an exceptional element for the extension $E_{n-1}/F$,  and so Theorem \ref{th:embeddable.decomposition} tells us that the decomposition can be chosen so that $\mathcal{X}_i \subseteq  \{a_{n-1}\}\cdot \iota^{n-1}_i(k_{m-2}E_i) \subseteq  \{a_{n-1}\}\cdot k_{m-2}E_{n-1}$.

Second, Corollary \ref{cor:choose.your.decomposition} gives us a great deal of freedom in choosing the submodule $\mathcal{Y}_{n-1}$. Specifically, since $\iota^{n-1}_0\circ N^{n-1}_0$ is given by the action of $(\sigma-1)^{p^{n-1}-1}$, we may choose any $\F_p$-basis $\mathcal{I}$ of $\iota^{n-1}_0(N^{n-1}_0(k_{m-1}E_{n-1}))$ and --- for every $x \in \mathcal{I}$ --- an element $\alpha_x \in k_{m-1}E_{n-1}$ so that $\iota^{n-1}_0(N^{n-1}_0(\alpha_x)) = x$.  Then Corollary \ref{cor:choose.your.decomposition} says that $\mathcal{Y}_{n-1}$ can be taken to be $\oplus_{x \in \mathcal{I}} \langle \alpha_x \rangle_{\F_p[G_{n-1}]}$.

We choose our basis $\mathcal{I}$ as the disjoint union of $\mathcal{I}_K, \mathcal{I}_N$ and $\hat{\mathcal{I}}$, where
\begin{enumerate}
\item $\mathcal{I}_K$ is a basis for $\ker \iota^n_{n-1} \cap \iota^{n-1}_0(N^{n-1}_0(k_{m-1}E_{n-1}))$;
\item $\mathcal{I}_N$ is a basis for a complement to $$\ker\iota^n_{n-1} \cap \iota^{n-1}_0(N^n_0(k_{m-1}E_n)) \quad \mbox{in} \quad \iota^{n-1}_0(N^n_0(k_{m-1}E_n));$$
\item and $\hat{\mathcal{I}}$ is a basis for a complement to $$\langle \mathcal{I}_K,\mathcal{I}_N\rangle_{\F_p} \quad \mbox{in} \quad \iota^{n-1}_0(N^{n-1}_0(k_{m-1}E_{n-1})).$$
\end{enumerate}

By Lemma \ref{le:preliminaryformathcalK}, for every $x \in \mathcal{I}_K$ there exists $\alpha_x$ so that $\iota^{n-1}_0(N^{n-1}_0(\alpha_x)) = x$ and $\alpha_x \in \ker\iota^n_{n-1}$. Hence we define $$\mathcal{K}:=\oplus_{x \in \mathcal{I}_K} \langle \alpha_x \rangle_{\F_p[G_{n-1}]} \subseteq  \ker\iota_{n-1}^n.$$ 

For each $x \in \mathcal{I}_N$, there exists $\beta \in k_{m-1}E_n$ so that $\iota^{n-1}_0(N^n_0(\beta)) = x$, and therefore $\iota^{n-1}_0(N^{n-1}_0(N^n_{n-1}(\beta))) = x$.  Hence we define $$\mathcal{N} :=\oplus_{x \in \mathcal{I}_N} \langle N^n_{n-1}(\beta)\rangle_{\F_p[G_{n-1}]} \subseteq  N^n_{n-1}(k_{m-1}E_n).$$  

For each $x \in \hat{\mathcal{I}}$ we choose arbitrary $\alpha_x \in k_{m-1}E_{n-1}$ to satisfy $\iota^{n-1}_0(N^{n-1}_0(\alpha_x)) = x$, and we let $$\hat{\mathcal{Y}}_{n-1} := \oplus_{x \in \hat{\mathcal{I}}}\langle \alpha_x \rangle_{\F_p[G_{n-1}]}.$$
%
\end{proof}

We will show that the submodule $\Gamma(m,n)$ of Proposition \ref{prop:Gamma module} is $\mathcal{Y}_0 \oplus \cdots \oplus \mathcal{Y}_{n-2} \oplus \hat{\mathcal{Y}}_{n-1}$.  We proceed by determining a complement for $N^n_{n-1}(k_{m-1}E_n)$ in $k_{m-1}E_{n-1}$, beginning with a calculation of $\ker(\iota^n_{n-1})$.

\begin{lemma}\label{le:ker}
Using the notation from Lemma \ref{le:choosing.decomposition.wisely}, $$\ker\left(\xymatrix{k_{m-1}E_{n-1} \ar[r]^{\iota^n_{n-1}} & k_{m-1}E_n}\right) =
\mathcal{X}_0 \oplus \cdots \oplus \mathcal{X}_{n-2} \oplus \mathcal{K}.$$
\end{lemma}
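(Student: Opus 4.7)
The plan is to prove the two inclusions separately. The inclusion $\mathcal{X}_0\oplus\cdots\oplus\mathcal{X}_{n-2}\oplus\mathcal{K}\subseteq\ker\iota^n_{n-1}$ is essentially immediate from the construction: by Lemma~\ref{le:choosing.decomposition.wisely} each $\mathcal{X}_i$ lies in $\{a_{n-1}\}\cdot k_{m-2}E_{n-1}$, which by (\ref{eq:kummer.for.k.theory}) coincides with $\ker\iota^n_{n-1}$, and $\mathcal{K}\subseteq\ker\iota^n_{n-1}$ by construction.

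For the reverse inclusion, set $M':=\mathcal{Y}_0\oplus\cdots\oplus\mathcal{Y}_{n-2}\oplus\mathcal{N}\oplus\hat{\mathcal{Y}}_{n-1}$, so that $k_{m-1}E_{n-1}=(\mathcal{X}_0\oplus\cdots\oplus\mathcal{X}_{n-2}\oplus\mathcal{K})\oplus M'$. Writing an arbitrary $\gamma\in\ker\iota^n_{n-1}$ in this decomposition as $\gamma=\gamma_1+u$ shows $u\in M'\cap\ker\iota^n_{n-1}$, so the task reduces to proving $M'\cap\ker\iota^n_{n-1}=0$. Since every nonzero $\F_p[G_{n-1}]$-module has a nonzero $G_{n-1}$-fixed vector (apply $(\sigma-1)^{\ell-1}$ to an element of length $\ell$), it suffices to show $M'^{G_{n-1}}\cap\ker\iota^n_{n-1}=0$.

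To verify this, take $v$ in the intersection. First one checks $M'^{G_{n-1}}\subseteq\iota^{n-1}_0(k_{m-1}F)$: for $i\le n-2$, $\mathcal{Y}_i^{G_{n-1}}$ is spanned by $(\sigma-1)^{p^i-1}$ applied to the free $\F_p[G_i]$-generators of $\mathcal{Y}_i$, which via $\iota^i_0\circ N^i_0\equiv(\sigma-1)^{p^i-1}$ and the equivariance of $\iota^{n-1}_i$ lies in $\iota^{n-1}_0(k_{m-1}F)$; while $\mathcal{N}^{G_{n-1}}=\langle\mathcal{I}_N\rangle_{\F_p}$ and $\hat{\mathcal{Y}}_{n-1}^{G_{n-1}}=\langle\hat{\mathcal{I}}\rangle_{\F_p}$ both lie in $\iota^{n-1}_0(N^{n-1}_0(k_{m-1}E_{n-1}))$ by construction. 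So $v\in\iota^{n-1}_0(k_{m-1}F)$ with $\iota^n_{n-1}(v)=0$, and Lemma~\ref{le:preliminaryformathcalK} produces $\alpha\in\ker\iota^n_{n-1}$ with $\iota^{n-1}_0(N^{n-1}_0(\alpha))=v$. Noting from $\iota^{n-1}_0\circ N^{n-1}_0\equiv(\sigma-1)^{p^{n-1}-1}$ that $\iota^{n-1}_0(N^{n-1}_0(\ker\iota^n_{n-1}))\subseteq\ker\iota^n_{n-1}$, one concludes
$$v\in\ker\iota^n_{n-1}\cap\iota^{n-1}_0(N^{n-1}_0(k_{m-1}E_{n-1}))=\langle\mathcal{I}_K\rangle_{\F_p}\subseteq\mathcal{K},$$
the middle equality being the very definition of $\mathcal{I}_K$. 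Since $M'\cap\mathcal{K}=0$ by the ambient direct-sum decomposition, $v=0$.

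Structurally, Lemma~\ref{le:preliminaryformathcalK} and the choice of $\mathcal{I}_K$ were engineered precisely to capture fixed elements of $\ker\iota^n_{n-1}$ accessible through $\iota^{n-1}_0 N^{n-1}_0$, and thereby force the relevant $v$ into $\mathcal{K}$ rather than into $\mathcal{N}$, $\hat{\mathcal{Y}}_{n-1}$, or the lower $\mathcal{Y}_i$'s. The main potential obstacle is the verification $M'^{G_{n-1}}\subseteq\iota^{n-1}_0(k_{m-1}F)$, which requires careful tracking of the construction of each summand but is otherwise a routine computation with the operators $(\sigma-1)^{p^i-1}$.
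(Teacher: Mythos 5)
Your proof is correct and follows essentially the same route as the paper: easy containment from $\mathcal{X}_i\subseteq\{a_{n-1}\}\cdot k_{m-2}E_{n-1}$ and the construction of $\mathcal{K}$, then reduce to fixed vectors, push $M'^{G_{n-1}}$ into $\iota^{n-1}_0(k_{m-1}E_0)$, and use Lemma~\ref{le:preliminaryformathcalK} together with the definition of $\mathcal{I}_K$ to force $v\in\mathcal{K}^{G_{n-1}}\cap M'=0$. The only cosmetic differences are that the paper invokes the Exclusion Lemma where you argue directly via $(\sigma-1)^{\ell-1}$, and it obtains $M'^{G_{n-1}}\subseteq\iota^{n-1}_0(k_{m-1}E_0)$ at once from the second bullet of the inductive Theorem~\ref{th:embeddable.decomposition} rather than summand by summand.
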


\begin{proof}
Since $\mathcal{X}_i \subseteq  \{a_{n-1}\}\cdot k_{m-2}E_{n-1}$, Exact Sequence (\ref{eq:kummer.for.k.theory}) gives $\mathcal{X}_i \subseteq  \ker \iota^n_{n-1}$.  Lemma \ref{le:choosing.decomposition.wisely} also gives $\mathcal{K} \subseteq  \ker \iota^n_{n-1}$.   We complete the proof by showing that
\begin{equation}\label{eq:verifying.the.kernel.of.inclusion}
\begin{split}
\ker(\iota^n_{n-1}) \cap \left(\mathcal{Y}_0 \oplus \cdots \oplus \mathcal{Y}_{n-2} \oplus \hat{\mathcal{Y}}_{n-1} \oplus \mathcal{N}\right) = \{0\}.
\end{split}
\end{equation}
To do this we show that the fixed submodule of the direct sum above has trivial intersection with $\ker \iota^n_{n-1}$ (after which we can appeal to the Exclusion Lemma (\ref{le:exclusion})).

Since $\mathcal{N}, \hat{\mathcal{Y}}_{n-1} \subseteq  \mathcal{Y}_{n-1}$, Theorem \ref{th:embeddable.decomposition} gives  $$\ker \iota^n_{n-1} \cap \left(\mathcal{Y}_0 \oplus \cdots \oplus \mathcal{Y}_{n-2} \oplus \hat{\mathcal{Y}}_{n-1} \oplus \mathcal{N}\right)^{G_{n-1}} \subseteq  \ker \iota^n_{n-1} \cap \iota^{n-1}_0(k_{m-1}E_0).$$  Lemma \ref{le:preliminaryformathcalK}, on the other hand, shows that $$\ker \iota^n_{n-1} \cap \im~\iota^{n-1}_0 \subseteq  \ker \iota^n_{n-1} \cap \iota^{n-1}_0(N^{n-1}_0(k_{m-1}E_{n-1})) = \langle \mathcal{I}_K \rangle = \mathcal{K}^{G_{n-1}}.$$  Since the fixed parts of each of the modules $\mathcal{Y}_i$ ($0 \leq i \leq n-2$), $\hat{\mathcal{Y}}_{n-1}$ and $\mathcal{N}$ are $\F_p$-independent from the fixed part of $\mathcal{K}$, the Exclusion Lemma (\ref{le:exclusion}) implies that Equation (\ref{eq:verifying.the.kernel.of.inclusion}) is true.
\end{proof}

\begin{lemma}
Using the notation from Lemma \ref{le:choosing.decomposition.wisely}, $$\im\left(\xymatrix{k_{m-1}E_n \ar[r]^{N^n_{n-1}~} & k_{m-1}E_{n-1}}\right) = \mathcal{X}_0 \oplus \cdots \oplus \mathcal{X}_{n-2} \oplus \mathcal{K} \oplus \mathcal{N}.$$
\end{lemma}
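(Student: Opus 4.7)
My plan is to establish the two inclusions separately. For $\supseteq$, the inclusion $\mathcal{N} \subseteq \im N^n_{n-1}$ is immediate from the construction in Lemma \ref{le:choosing.decomposition.wisely}. For the remaining summand $\mathcal{X}_0 \oplus \cdots \oplus \mathcal{X}_{n-2} \oplus \mathcal{K}$, Lemma \ref{le:ker} identifies it with $\ker \iota^n_{n-1}$, which by Exact Sequence (\ref{eq:kummer.for.k.theory}) equals $\{a_{n-1}\}\cdot k_{m-2}E_{n-1}$. For any $\{a_{n-1}\}\cdot h$ in this kernel, the Projection Formula (\ref{eq:projection.formula}) gives $N^n_{n-1}(\{\root{p}\of{a_{n-1}}\}\cdot \iota^n_{n-1}(h)) = \{a_{n-1}\}\cdot h$, exploiting $N^n_{n-1}(\root{p}\of{a_{n-1}}) = (-1)^{p+1}a_{n-1} = a_{n-1}$ for $p$ odd.

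For $\subseteq$, I write any $\beta \in \im N^n_{n-1}$ as $\beta = \beta^{\mathcal{X}} + \beta^{\mathcal{Y}}$ with $\beta^{\mathcal{X}}$ in the RHS and $\beta^{\mathcal{Y}} \in \mathcal{Y}_0 \oplus \cdots \oplus \mathcal{Y}_{n-2} \oplus \hat{\mathcal{Y}}_{n-1}$. Since $\beta^{\mathcal{X}} \in \im N^n_{n-1}$ by the previous paragraph, so is $\beta^{\mathcal{Y}}$, reducing the problem to showing that $(\mathcal{Y}_0 \oplus \cdots \oplus \mathcal{Y}_{n-2} \oplus \hat{\mathcal{Y}}_{n-1}) \cap \im N^n_{n-1} = 0$. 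By the Exclusion Lemma (\ref{le:exclusion}) this reduces to the corresponding $G_{n-1}$-fixed statement, and Theorem \ref{th:embeddable.decomposition} applied inductively to $k_{m-1}E_{n-1}$ identifies the relevant fixed part as a complement to $\langle \mathcal{I}_K \cup \mathcal{I}_N\rangle_{\F_p}$ inside $\iota_0^{n-1}(k_{m-1}E_0)$. The crux therefore becomes the sub-claim
\[
\iota_0^{n-1}(k_{m-1}E_0) \cap \im N^n_{n-1} \subseteq \langle \mathcal{I}_K \cup \mathcal{I}_N\rangle_{\F_p}.
\]

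To prove this sub-claim, write $\gamma = \iota_0^{n-1}(g') = N^n_{n-1}(\delta)$. The $G_{n-1}$-invariance of $\gamma$ gives $(\sigma-1)\delta \in \ker N^n_{n-1}$, and Hilbert 90 (Proposition \ref{prop:hilbert.90.for.k.theory}) applied to $E_n/E_{n-1}$ yields $\ker N^n_{n-1} = (\sigma-1)^{p^{n-1}}k_{m-1}E_n$. Splitting $\delta = \delta_2 + (\sigma-1)^{p^{n-1}-1}\delta_1$ with $\delta_2 \in (k_{m-1}E_n)^{G_n}$, then applying $N^n_{n-1}$ and invoking the operator identity $(\sigma-1)^{p^{n-1}-1} = \iota_0^{n-1}\circ N^{n-1}_0$ on $k_{m-1}E_{n-1}$, produces $\gamma = N^n_{n-1}(\delta_2) + \iota_0^{n-1}(N^n_0(\delta_1))$. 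The second summand sits in $\iota_0^{n-1}(N^n_0(k_{m-1}E_n)) \subseteq \langle \mathcal{I}_K \cup \mathcal{I}_N\rangle_{\F_p}$ by the construction of $\mathcal{I}_N$ and $\mathcal{I}_K$. For the first, $\iota^n_{n-1}N^n_{n-1}(\delta_2) = p\delta_2 = 0$ places $N^n_{n-1}(\delta_2) \in \ker \iota^n_{n-1}$; rearranging the above decomposition also shows $N^n_{n-1}(\delta_2) \in \iota_0^{n-1}(k_{m-1}E_0)$, and Lemma \ref{le:preliminaryformathcalK} then lands it inside $\iota_0^{n-1}(N^{n-1}_0(k_{m-1}E_{n-1})) \cap \ker \iota^n_{n-1} = \langle \mathcal{I}_K\rangle_{\F_p}$.

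The hard part will be the sub-claim just described: a naive projection-formula argument only yields $g' \in N^1_0(k_{m-1}E_1)$, which is not enough to land $\gamma$ in $\langle \mathcal{I}_K \cup \mathcal{I}_N\rangle_{\F_p}$. The key move is the Hilbert 90 splitting at the top level $E_n/E_{n-1}$, which isolates a $G_n$-fixed component $\delta_2$ whose norm falls into $\ker \iota^n_{n-1}$ and is then routed into $\langle \mathcal{I}_K\rangle_{\F_p}$ by Lemma \ref{le:preliminaryformathcalK}.
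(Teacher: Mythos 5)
Your $\supseteq$ direction is fine and in fact slightly cleaner than the paper's: the paper norms down $\{a_n^t\}\cdot\iota^n_{n-1}(g)$ using an exceptional element $a_n$ of $E/F$, whereas you use $\{\root{p}\of{a_{n-1}}\}\cdot\iota^n_{n-1}(h)$ directly, which works since $p$ is odd. Your reduction of $\subseteq$ to the sub-claim $\iota_0^{n-1}(k_{m-1}E_0)\cap\im N^n_{n-1}\subseteq\langle\mathcal{I}_K\cup\mathcal{I}_N\rangle_{\F_p}$ via the Exclusion Lemma and the inductive application of Theorem~\ref{th:embeddable.decomposition} is also correct. Note, however, that the paper's proof of $\subseteq$ goes through Theorem~\ref{th:coarse.decomposition}: if $\iota^n_{n-1}(\gamma)\neq 0$ for $\gamma=N^n_{n-1}(\alpha)$, then $\ell_G(\alpha)>2p^{n-1}$ and the coarse decomposition forces $\iota^n_{n-1}(\gamma)\in\iota^n_0(N^n_0(k_{m-1}E_n))$. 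You replace that heavy input with a Hilbert~90 argument, which is a genuinely different route.

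The gap is in your Hilbert~90 step. Proposition~\ref{prop:hilbert.90.for.k.theory} asserts exactness of $K_{m-1}E_n\xrightarrow{\sigma^{p^{n-1}}-1}K_{m-1}E_n\xrightarrow{N^n_{n-1}}K_{m-1}E_{n-1}$ at the level of integral $K$-theory; it does \emph{not} give $\ker N^n_{n-1}=(\sigma-1)^{p^{n-1}}k_{m-1}E_n$ in the reduced groups. Since $N^n_{n-1}\circ\iota^n_{n-1}=p\equiv 0$ on $k_{m-1}E_{n-1}$, the subgroup $\im\iota^n_{n-1}$ already lies in $\ker N^n_{n-1}$, and it is generally not contained in $\im(\sigma^{p^{n-1}}-1)$ (for $m=2$, $n=1$ this is the assertion that every $a\in E_0^\times$ lies in $\langle\xi_p\rangle N^1_0(E_1^\times)E_0^{\times p}$, which fails in general). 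The correct mod-$p$ statement, obtained by lifting to $K$-theory as the paper does in the proof of Proposition~\ref{prop:fixedeltsarenorms}, is $\ker N^n_{n-1}=\iota^n_{n-1}(k_{m-1}E_{n-1})+(\sigma^{p^{n-1}}-1)k_{m-1}E_n$. With that, $(\sigma-1)\delta=\iota^n_{n-1}(f)+(\sigma-1)^{p^{n-1}}\delta_1$, so $\delta_2:=\delta-(\sigma-1)^{p^{n-1}-1}\delta_1$ is not $G$-fixed, and your identity $\iota^n_{n-1}N^n_{n-1}(\delta_2)=p\delta_2=0$ is no longer justified. The argument can be repaired --- one computes $\iota^n_{n-1}N^n_{n-1}(\delta_2)=(\sigma-1)^{p^n-p^{n-1}-1}\iota^n_{n-1}(f)=\iota^n_{n-1}\bigl((\sigma-1)^{p^n-p^{n-1}-1}f\bigr)=0$ because $p^n-p^{n-1}-1\geq p^{n-1}$ for $p>2$, and everything else goes through unchanged --- but as written, the step invoking Proposition~\ref{prop:hilbert.90.for.k.theory} for reduced $K$-theory is a genuine gap.
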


\begin{proof}
Let $a_n$ be an exceptional element of $E/F$, and choose $t$ so that $N^n_{n-1}(a_n^t) \in a_{n-1}E_{n-1}^{\times p}$.  An element $\gamma \in \ker \iota^n_{n-1}$ takes the form $\gamma = \{a_{n-1}\}\cdot g$ by Exact Sequence (\ref{eq:kummer.for.k.theory}), and so the Projection Formula (\ref{eq:projection.formula}) gives $N^n_{n-1}(\{a_n^t\}\cdot\iota^n_{n-1}(g)) = \{a_{n-1}\}\cdot g$.  Hence by Lemma \ref{le:ker}, $$\ker \iota^n_{n-1} = \mathcal{X}_0 \oplus \cdots \oplus \mathcal{X}_{n-2} \oplus \mathcal{K} \subseteq  N^n_{n-1}(k_{m-1}E_n).$$  Of course $\mathcal{N}$ is constructed so that $\mathcal{N} \subseteq  N^n_{n-1}(k_{m-1}E_n)$, and so we have $$\mathcal{X}_0 \oplus \cdots \oplus \mathcal{X}_{n-2} \oplus \mathcal{K} \oplus \mathcal{N} \subseteq N^n_{n-1}(k_{m-1}E_n).$$

For the opposite containment, it is enough to show $$N^n_{n-1}(k_{m-1}E_n) \cap \left(\mathcal{Y}_0 \oplus \cdots \oplus \mathcal{Y}_{n-2} \oplus \hat{\mathcal{Y}}_{n-1}\right) = \{0\}.$$ By the Exclusion Lemma (\ref{le:exclusion}), this is equivalent to showing the associated fixed submodules are $\F_p$-independent.  We will verify this by showing $$(N^n_{n-1}(k_{m-1}E_n))^{G_{n-1}} \subseteq  \mathcal{X}_0 \oplus \cdots \oplus \mathcal{X}_{n-2} \oplus \mathcal{K} \oplus \mathcal{N}.$$ 

Let $\gamma$ be an element in $(N^n_{n-1}(k_{m-1}E_n))^{G_{n-1}}$, say $\gamma = N^n_{n-1}(\alpha)$ for some $\alpha \in k_{m-1}E_n$.  If $\iota^n_{n-1}(\gamma) = 0$ then $\gamma \in \ker\iota^n_{n-1} = \mathcal{X}_0 \oplus \cdots \oplus \mathcal{X}_{n-2} \oplus \mathcal{K}$ by Lemma~\ref{le:ker}, and we are done.  Otherwise $\gamma \notin \ker \iota^n_{n-1}$, and so $\iota^n_{n-1}(\gamma) = \iota^n_{n-1}(N^n_{n-1}(\alpha)) \neq 0$.  Since $\iota^n_{n-1} \circ N^n_{n-1}$ is represented by the polynomial $$\sigma^{p^{n-1}} + \cdots + \sigma^{p^{n-1}(p-1)} \equiv (\sigma-1)^{p^n-p^{n-1}},$$ this implies that $\ell(\alpha) > p^n-p^{n-1} \geq 2p^{n-1}$. The decomposition of $k_{m-1}E_n$ provided by Theorem \ref{th:coarse.decomposition} implies $\iota^n_{n-1}(\gamma)$ is the fixed part of a submodule of dimension $p^n$; i.e., $\iota^n_{n-1}(\gamma) = \iota^n_0(N^n_0(\beta))$ for some $\beta \in k_{m-1}E_n$.  If we let $\delta = \iota_0^{n-1} (N_0^n(\beta))$, then we have $\iota^n_{n-1}(\gamma - \delta) = 0$.  Hence we have $\gamma - \delta \in \ker(\iota^n_{n-1})$, from which it follows that $$\gamma \in \iota^{n-1}_0(N^n_0(k_{m-1}E_n)) + \ker \iota^n_{n-1}.$$


Recall, however, that $\mathcal{N}^{G_{n-1}} = \langle \mathcal{I}_N \rangle_{\F_p}$ was chosen as a complement to $\ker \iota^n_{n-1} \cap \iota^{n-1}_0(N^n_0(k_{m-1}E_n)) \subseteq  \langle \mathcal{I}_K \rangle_{\F_p}$ in $\iota^{n-1}_0(N^n_0(k_{m-1}E_n))$.  Hence we have $\langle\mathcal{I}_K,\mathcal{I}_N\rangle_{\F_p} \supseteq \iota^{n-1}_0(N^n_0(k_{m-1}E_n))$, and so $$\gamma \in \langle\mathcal{I}_K,\mathcal{I}_N\rangle_{\F_p} + \ker \iota^n_{n-1} \subseteq  \mathcal{X}_0 \oplus \cdots \oplus \mathcal{X}_{n-2} \oplus \mathcal{K} \oplus \mathcal{N}.$$
\end{proof}

\begin{proof}[Proof of Proposition \ref{prop:Gamma module}]
For each $0 \leq i < n-1$ define $\mathcal{Z}_i := \mathcal{Y}_i$, and define $\mathcal{Z}_{n-1} := \hat{\mathcal{Y}}_{n-1}$.  We define $\Gamma(m,n) := \mathcal{Z}_{0} \oplus \cdots \oplus \mathcal{Z}_{n-1}$.  The previous lemmas show that $\Gamma(m,n)$ satisfies (\ref{it:i}) and (\ref{it:ii}), and we have already verified that properties (\ref{it:iii}) and (\ref{it:iv}) follow from (\ref{it:ii}).
\end{proof}

We record the following corollary, since it will be useful later.

\begin{corollary}\label{co:longer than you think}
If $g \in \Gamma(m,n)^{G_{n-1}}$ and $N^{n-1}_{n-2}(\{a_{n-1}\} \cdot g) = 0$, then for some $\alpha \in \Gamma(m,n)$ we have $g = \iota^{n-1}_0(N^{n-1}_0(\alpha))$.
\end{corollary}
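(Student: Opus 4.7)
The plan is to use the explicit structure of $\Gamma(m,n) = \mathcal{Z}_0 \oplus \cdots \oplus \mathcal{Z}_{n-1}$ coming from Proposition~\ref{prop:Gamma module}, combining two features: each $\mathcal{Z}_i$ is free over $\F_p[G_i]$ (so is annihilated by $(\sigma-1)^{p^i}$ when viewed as an $\F_p[G_{n-1}]$-module), and fixed elements of $\Gamma(m,n)$ lie in $\iota_0^{n-1}(k_{m-1}E_0)$.  The overall idea is to lift $g$ to an element $y \in k_{m-1}E_{n-1}$ with $g = (\sigma-1)^{p^{n-1}-p^{n-2}}y$ via Kummer theory, concentrate that lift inside $\Gamma(m,n)$ and then inside $\mathcal{Z}_{n-1}$, and finally exploit freeness of $\mathcal{Z}_{n-1}$.

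First I would use property~(\ref{it:i}) to factor $g = \iota_{n-2}^{n-1}(\hat g)$, where $\hat g = \iota_0^{n-2}(g_0)$ for some $g_0 \in k_{m-1}E_0$ with $g = \iota_0^{n-1}(g_0)$.  The Projection Formula~(\ref{eq:projection.formula}) together with the norm compatibility $N_{n-2}^{n-1}(a_{n-1}) = a_{n-2}$ then rewrites the hypothesis as $\{a_{n-2}\}\cdot \hat g = 0$, and Kummer exactness~(\ref{eq:kummer.for.k.theory}) yields $y \in k_{m-1}E_{n-1}$ with $\hat g = N_{n-2}^{n-1}(y)$.  Applying $\iota_{n-2}^{n-1}$ and using $\iota_{n-2}^{n-1}\circ N_{n-2}^{n-1} \equiv (\sigma-1)^{p^{n-1}-p^{n-2}}$ gives
$$g = (\sigma-1)^{p^{n-1}-p^{n-2}}y.$$

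Next I would split $y = y_\Gamma + y_N$ using property~(\ref{it:ii}), with $y_\Gamma \in \Gamma(m,n)$ and $y_N \in N_{n-1}^n(k_{m-1}E_n)$.  Since both summands are $\F_p[G_{n-1}]$-stable and $g \in \Gamma(m,n)$, we may replace $y$ by $y_\Gamma$.  Writing $y_\Gamma = z_0 + \cdots + z_{n-1}$ with $z_i \in \mathcal{Z}_i$, the main observation is that whenever $i \leq n-2$ we have $p^i \leq p^{n-2} < (p-1)p^{n-2} = p^{n-1}-p^{n-2}$ (this is exactly where the assumption $p>2$ is used), so the operator $(\sigma-1)^{p^{n-1}-p^{n-2}}$ annihilates each such $\mathcal{Z}_i$.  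Hence $g = (\sigma-1)^{p^{n-1}-p^{n-2}}z_{n-1}$ lies in $\mathcal{Z}_{n-1}^{G_{n-1}}$.

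Finally, since $\mathcal{Z}_{n-1}$ is a free $\F_p[G_{n-1}]$-module, its fixed submodule is precisely $(\sigma-1)^{p^{n-1}-1}\mathcal{Z}_{n-1}$, and on $k_{m-1}E_{n-1}$ this operator coincides with $\iota_0^{n-1}\circ N_0^{n-1}$.  Thus $g = \iota_0^{n-1}(N_0^{n-1}(\alpha))$ for some $\alpha \in \mathcal{Z}_{n-1} \subseteq \Gamma(m,n)$, as desired.  I expect the only delicate step to be the ``component tracking'' in the middle paragraph; everything else is essentially routine once one has Proposition~\ref{prop:Gamma module} and the standard dictionary between norm/inclusion composites and powers of $\sigma-1$ in hand.
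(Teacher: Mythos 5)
Your proof is correct and follows essentially the same route as the paper's: factor $g$ through $E_{n-2}$, apply the projection formula to turn the hypothesis into $\{a_{n-2}\}\cdot\hat g = 0$, use Kummer exactness to realize $g$ in $\im(\sigma-1)^{p^{n-1}-p^{n-2}}$, and then invoke the stratified structure of $\Gamma(m,n)$ to push $g$ into $\mathcal{Z}_{n-1}^{G_{n-1}}$; you simply spell out the projection onto $\Gamma(m,n)$ and the $\mathcal{Z}_i$-component argument that the paper leaves implicit. One small quibble: the inequality $p^{n-2} \leq (p-1)p^{n-2}$ holds for every $p \geq 2$, so the claim that ``this is exactly where $p>2$ is used'' is not accurate (the hypothesis $p>2$ is used elsewhere in the paper, not in this corollary).
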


\begin{proof}
Since $\Gamma(m,n)^{G_{n-1}} \subseteq  \iota^{n-1}_0 \left(k_{m-1}E_0\right)$, it follows that $g = \iota^{n-1}_{n-2} (\hat g)$ for some $\hat g \in k_{m-1}E_{n-2}$. By the Projection Formula (\ref{eq:projection.formula}) we therefore have $$0 = N^{n-1}_{n-2}(\{a_{n-1}\}\cdot g) = \{a_{n-2}\}\cdot \hat g,$$ and so Exact Sequence (\ref{eq:kummer.for.k.theory}) implies $\hat g = N^{n-1}_{n-2} (\alpha')$ for some $\alpha' \in k_{m-1}E_{n-1}$.  Hence we have $$g = \iota^{n-1}_{n-2}(\hat g) = \iota^{n-1}_{n-2}\left(N^{n-1}_{n-2}(\alpha')\right) = (\sigma-1)^{p^{n-1}-p^{n-2}}\alpha' \in \im(\sigma-1)^{p^{n-1}-p^{n-2}}.$$
Since $\Gamma(m,n)$ is a direct sum of cyclic submodules of dimensions $p^i$ for $0 \leq i \leq n-1$, we must have $g \in \im(\sigma-1)^{p^{n-1}-1}$.  Hence $g \in \mathcal{Z}_{n-1}^{G_{n-1}}$.
\end{proof}

\section{Fixed Elements and Norms}\label{sec:fixed.elements.are.norms}

The key result of this section is Proposition \ref{prop:fixedeltsarenorms}.  This result uses Hilbert 90-like results and facts about abstract $\F_p[G]$-modules to prove that elements in $\ker(N^n_{n-1})$ have ``nice'' module-theoretic properties. Again, we will not assume in this section that the given extension $E/F$ is embeddable --- we will just use the facts that $\Gal(E/F) \simeq \Z/p^n\Z$, that $\xi_p \in E$ and that $p>2$ is prime.

In our setting we need to be careful about the possible difference in length between the $\F_p[G_i]$-submodule generated by an element $\gamma \in k_mE_i$ and the $\F_p[G_n]$-submodule of $k_mE_n$ generated by $\iota^n_{i}(\gamma)$.  Towards this end, we give results for determining when an element lies in the submodule $\im(\iota^n_i)$ and --- when it does --- for controlling the $\F_p[G_i]$-lengths of representatives from $k_mE_i$ for this element.

We also establish notation to distinguish these potentially different notions of length: for an element $\gamma \in k_mE_i$, we write $\ell_{G_i}(\gamma)$ to denote the length of the $\F_p[G_i]$-submodule generated by $\gamma$.  In the same way, the $\F_p$-dimension of the $\F_p[H_i]$-submodule generated by $\gamma \in k_mE_n$ is denoted $\ell_{H_i}(\gamma)$.  Since $H_i = \langle \sigma^{p^i}\rangle$, we note that
$$\ell_{H_i}(\gamma) = \min\{\ell: (\sigma^{p^i}-1)^\ell\gamma = 0\} = \min\{\ell:(\sigma-1)^{p^i\ell} \gamma = 0\}.$$

\begin{lemma}\label{le:warmup to fixed base}
If $N^n_{n-1}(\gamma) = 0$ and $\gamma \in (k_mE_n)^{H_{n-1}}$, then
there exists $\hat \gamma \in k_mE_{n-1}$ such that $\iota^n_{n-1} (\hat
\gamma) = \gamma$ and $\ell_{G_{n-1}}(\hat \gamma) = \ell_G(\gamma)$.
Additionally, if $\ell_G(\gamma) \leq p^{n-1}-p^{n-2}$ we may insist
$N^{n-1}_{n-2}(\hat \gamma) =0$.
\end{lemma}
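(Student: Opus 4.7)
My plan is to construct $\hat\gamma$ in three stages: first produce some lift $\hat\gamma_0 \in k_mE_{n-1}$ of $\gamma$, then modify within the fiber $\hat\gamma_0 + \ker\iota^n_{n-1}$ to force $\ell_{G_{n-1}}(\hat\gamma) = \ell_G(\gamma)$, and finally (when $\ell_G(\gamma) \leq p^{n-1}-p^{n-2}$) make one more adjustment to secure $N^{n-1}_{n-2}(\hat\gamma) = 0$. Throughout, set $\tau := \sigma^{p^{n-1}}$ (a generator of $H_{n-1}$), let $\bar\sigma$ denote the image of $\sigma$ in $G_{n-1}$, and write $\ell := \ell_G(\gamma)$. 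For the first stage, I would combine Hilbert 90 for $K$-theory (Proposition \ref{prop:hilbert.90.for.k.theory}) applied to the cyclic degree-$p$ extension $E_n/E_{n-1}$ with the Kummer sequence (\ref{eq:kummer.for.k.theory}). Lifting $\gamma$ to $\tilde\gamma \in K_mE_n$, the hypothesis $N^n_{n-1}(\gamma) = 0$ mod $p$ gives $N^n_{n-1}(\tilde\gamma) = p\delta$; after replacing $\tilde\gamma$ by $\tilde\gamma - \iota^n_{n-1}(\delta)$, Hilbert 90 writes it as $(\tau-1)\tilde\alpha$. Reducing mod $p$ gives $\gamma = \iota^n_{n-1}(\bar\delta) + (\tau-1)\bar\alpha$ in $k_mE_n$; the hypothesis $(\tau-1)\gamma = 0$ forces $(\tau-1)^2\bar\alpha = 0$, and a Kummer-theoretic analysis of the resulting element extracts $\hat\gamma_0 \in k_mE_{n-1}$ with $\iota^n_{n-1}(\hat\gamma_0) = \gamma$.

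For the second stage, since $\iota^n_{n-1}\bigl((\bar\sigma-1)^\ell\hat\gamma_0\bigr) = (\sigma-1)^\ell\gamma = 0$, the element $(\bar\sigma-1)^\ell\hat\gamma_0$ lies in $\ker\iota^n_{n-1} = \{a_{n-1}\}\cdot k_{m-1}E_{n-1}$ by the Kummer sequence. Invoking Proposition \ref{prop:Gamma module}, I would write $(\bar\sigma-1)^\ell\hat\gamma_0 = \{a_{n-1}\}\cdot h$ with $h \in \Gamma(m,n) = \bigoplus_i \mathcal{Z}_i$; property (iv) of that proposition gives $\ell_{G_{n-1}}(h) = \ell_{G_{n-1}}(\hat\gamma_0) - \ell$. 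Freeness of each $\mathcal{Z}_i$ over $\F_p[G_i]$ then permits solving $(\bar\sigma-1)^\ell g = h$ for some $g \in \Gamma(m,n)$, and then $\hat\gamma := \hat\gamma_0 - \{a_{n-1}\}\cdot g$ is a lift of $\gamma$ with $\ell_{G_{n-1}}(\hat\gamma) \leq \ell$, hence equal to $\ell$.

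For the third stage, assume $\ell \leq p^{n-1}-p^{n-2}$. The operator identity $\iota^{n-1}_{n-2}\circ N^{n-1}_{n-2} = (\bar\sigma-1)^{p^{n-1}-p^{n-2}}$ combined with $\ell_{G_{n-1}}(\hat\gamma) = \ell$ forces $N^{n-1}_{n-2}(\hat\gamma) \in \ker\iota^{n-1}_{n-2} = \{a_{n-2}\}\cdot k_{m-1}E_{n-2}$. Writing $N^{n-1}_{n-2}(\hat\gamma) = \{a_{n-2}\}\cdot\hat g$ with $\hat g \in \Gamma(m,n-1)$ (applying Proposition \ref{prop:Gamma module} one level down), the Projection Formula (\ref{eq:projection.formula}) and norm compatibility $N^{n-1}_{n-2}(a_{n-1}) = a_{n-2}$ from (\ref{eq:normcompatibility}) yield $N^{n-1}_{n-2}\bigl(\{a_{n-1}\}\cdot\iota^{n-1}_{n-2}(\hat g)\bigr) = \{a_{n-2}\}\cdot\hat g$. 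Hence $\hat\gamma' := \hat\gamma - \{a_{n-1}\}\cdot\iota^{n-1}_{n-2}(\hat g)$ satisfies $N^{n-1}_{n-2}(\hat\gamma') = 0$ and $\iota^n_{n-1}(\hat\gamma') = \gamma$; the $G_{n-2}$-equivariance of $N^{n-1}_{n-2}$ together with property (iv) applied to $\Gamma(m,n-1)$ bounds $\ell_{G_{n-2}}(\hat g) \leq \ell$, so the length is preserved.

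The principal obstacle is the solubility of $(\bar\sigma-1)^\ell g = h$ inside $\Gamma(m,n)$ in the second stage. Because each $\mathcal{Z}_i$ is free over $\F_p[G_i]$ rather than over $\F_p[G_{n-1}]$, a component $h_i \in \mathcal{Z}_i$ lies in $(\bar\sigma-1)^\ell\mathcal{Z}_i$ only when $\ell_{G_i}(h_i) \leq p^i - \ell$; I expect controlling these stratum-wise lengths will require leveraging the hypothesis $\gamma \in (k_mE_n)^{H_{n-1}}$ (which forces $\ell \leq p^{n-1}$) together with a careful adjustment of $\hat\gamma_0$ within its fiber to force the needed length bounds in each summand.
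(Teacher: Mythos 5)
You have correctly identified the crux of the problem in your own Stage~2, and it is indeed a genuine gap: because $\Gamma(m,n)=\oplus_i\mathcal{Z}_i$ is stratified into free $\F_p[G_i]$-modules rather than being free over $\F_p[G_{n-1}]$, the equation $(\bar\sigma-1)^\ell g = h$ need not be solvable in $\Gamma(m,n)$, and the observation $\ell\leq p^{n-1}$ does not by itself repair this. The paper avoids the one-shot equation entirely. Instead of analyzing $(\bar\sigma-1)^\ell\hat\gamma_0$, it looks one step below at the \emph{bottom} $f:=(\sigma-1)^{\ell-1}\hat\gamma$, where $\ell:=\ell_{G_{n-1}}(\hat\gamma)$, after first normalizing the lift to lie in the $\mathcal{Y}$-part of the inductive decomposition of $k_mE_{n-1}$ supplied by Theorem~\ref{th:embeddable.decomposition} at level $n-1$ (a legal normalization because $\oplus\mathcal{X}_i\subseteq\ker\iota^n_{n-1}$). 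This forces $f\in(\oplus\mathcal{Y}_i)^{G_{n-1}}=\iota_0^{n-1}(k_mE_0)$, hence $N^{n-1}_{n-2}(f)=0$. When $\ell>\ell_G(\gamma)$, $f$ also lies in $\ker\iota^n_{n-1}$, so $f=\{a_{n-1}\}\cdot\iota_0^{n-1}(g)$ with $g\in\Gamma(m,n)^{G_{n-1}}$ and $\{a_{n-2}\}\cdot\iota_0^{n-2}(g)=0$; Corollary~\ref{co:longer than you think} --- the precise ingredient you lacked --- then places $g$ in the top stratum $\mathcal{Z}_{n-1}^{G_{n-1}}$, so that $\{a_{n-1}\}\cdot\alpha$ (with $\iota_0^{n-1}(N_0^{n-1}\alpha)=\iota_0^{n-1}(g)$) has full length $p^{n-1}$. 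Subtracting $(\sigma-1)^{p^{n-1}-\ell}(\{a_{n-1}\}\cdot\alpha)$ strictly lowers the length without changing $\iota^n_{n-1}(\hat\gamma)$, and one iterates. The moral: strip the top off one unit at a time, using the corollary to certify that the correcting element is always long enough.

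Two further, smaller points. In Stage~1 you reduce to $(\tau-1)^2\bar\alpha=0$ and then say a ``Kummer-theoretic analysis'' extracts a preimage, but this is exactly the nontrivial assertion that $\ker N^n_{n-1}\cap(k_mE_n)^{H_{n-1}}=\im\iota^n_{n-1}$; the paper simply quotes this as \cite[Lemma~3]{LMS3}, and your sketch does not yet supply it. In Stage~3 your correcting element $\hat\gamma'=\hat\gamma-\{a_{n-1}\}\cdot\iota^{n-1}_{n-2}(\hat g)$ is the right one, but the preservation of length requires the argument the paper gives: apply $N^{n-1}_{n-2}$ to $(\sigma-1)^{\ell_G(\gamma)}\hat\gamma=0$ to obtain $\{a_{n-2}\}\cdot(\sigma-1)^{\ell_G(\gamma)}\hat g=0$, and invoke Proposition~\ref{prop:Gamma module}(\ref{it:iv}) for $\Gamma(m,n-1)$ to conclude $\ell_{G_{n-2}}(\hat g)\leq\ell_G(\gamma)$, which bounds $\ell_{G_{n-1}}(\{a_{n-1}\}\cdot\iota^{n-1}_{n-2}(\hat g))$ and hence $\ell_{G_{n-1}}(\hat\gamma')$.
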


\begin{proof}
\cite[Lemma 3]{LMS3} shows that the sequence $$\xymatrix{k_mE_{n-1} \ar[r]^-{\iota_{n-1}^n} &(k_mE_n)^{H_{n-1}} \ar[r]^-{N^n_{n-1}}& \{a_{n-1}\}\cdot k_{m-1}E_{n-1}}$$ is exact, so for $N_{n-1}^n(\gamma) = 0$ we may conclude $\gamma = \iota_{n-1}^n(\hat \gamma)$ for some $\hat \gamma \in k_mE_{n-1}$.  Notice also that when $n = 1$ the length condition is trivial, so we may assume that $n \geq 2$.

We now argue that $\hat \gamma$ may be taken so that
$\ell_{G_{n-1}}(\hat \gamma) = \ell_G(\gamma)$.  We cannot have
$\ell_{G_{n-1}}(\hat \gamma) < \ell_G(\gamma)$, since if
$(\sigma-1)^x \hat \gamma = 0 \in k_{m}E_{n-1}$ then $$(\sigma-1)^x \gamma=(\sigma-1)^x \iota^n_{n-1}(\hat \gamma)= \iota^n_{n-1}\left((\sigma-1)^x \hat \gamma\right) = 0 .$$

So suppose that $\ell:=\ell_{G_{n-1}}(\hat \gamma) > \ell_G(\gamma)$.  Our goal is to use Corollary \ref{co:longer than you think} to adjust $\hat \gamma$ by an element $\{a_{n-1}\}\cdot \alpha \in k_{m}E_{n-1}$ in order to produce an element of smaller length whose image under inclusion is $\gamma$. For this we study $f:=(\sigma-1)^{\ell-1}\hat \gamma$.

First, by induction we know $k_m E_{n-1} = \mathcal{X}_0 \oplus \cdots \oplus \mathcal{X}_{n-2} \oplus
\mathcal{Y}_0 \oplus \cdots \oplus \mathcal{Y}_{n-1}$, where by Theorem \ref{th:embeddable.decomposition} we have $\mathcal{X}_i \subseteq  \{a_{n-1}\}\cdot \iota^{n-1}_i(k_{m-1}E_{i}) \subseteq
\ker \iota_{n-1}^n$.  Hence we may take $\hat \gamma \in
\mathcal{Y}_0 \oplus \cdots \oplus \mathcal{Y}_{n-1}$.  Since $f :=(\sigma-1)^{\ell-1}\hat \gamma \in \left(\mathcal{Y}_0 \oplus \cdots \oplus \mathcal{Y}_{n-1}\right)^{G_{n-1}}$ we have $f \in \iota_0^{n-1} \left(k_m E_0\right)$.  Since $n \geq 2$, we therefore conclude \begin{equation}\label{eq:norm of f}N^{n-1}_{n-2}(f) = 0.\end{equation}

On the other hand, since $\ell>\ell(\gamma)$ we know $f \in \ker(\iota_{n-1}^n)$.  Exact Sequence (\ref{eq:kummer.for.k.theory}) and Proposition \ref{prop:Gamma module} then imply $f \in \{a_{n-1}\}\cdot \Gamma(n,m)^{G_{n-1}}$, say $f = \{a_{n-1}\}\cdot \iota_0^{n-1}(g)$. Recalling Equation (\ref{eq:norm of f}), the Projection Formula (\ref{eq:projection.formula}) gives $$0=N^{n-1}_{n-2}(f) = N^{n-1}_{n-2} \left(\{a_{n-1}\}\cdot \iota^{n-1}_0(g)\right) = \{a_{n-2}\}\cdot \iota^{n-2}_0 (g).$$
This allows us to apply Corollary \ref{co:longer than you think}, and we conclude that $\iota_0^{n-1}(g) = \iota_0^{n-1}(N^{n-1}_0(\alpha))$ for some $\alpha \in \Gamma(m,n)$.  Since $\ell_{G_{n-1}}(\{a_{n-1}\}\cdot\alpha) = p^{n-1}$  by Proposition \ref{prop:Gamma module}(\ref{it:iv}) and $\iota_{n-1}^n(\{a_{n-1}\}\cdot \alpha) = 0$, we see that $$\hat \gamma - (\sigma-1)^{p^{n-1}-\ell_{G_{n-1}}(\hat \gamma)}(\{a_{n-1}\}\cdot \alpha)$$ has $G_{n-1}$-length smaller than $\ell_{G_{n-1}}(\hat \gamma)$ and has image $\gamma$ under $\iota_{n-1}^n$.
We iterate this process until we have constructed an element $\hat \gamma$ so that $\iota^n_{n-1}(\hat \gamma) = \gamma$ and $\ell_{G_{n-1}}(\hat \gamma) = \ell_G(\gamma)$.

All we have left is to show that if $\ell_G(\gamma) \leq p^{n-1} - p^{n-2}$, then we may insist $N^{n-1}_{n-2}(\hat \gamma) = 0$.  For this, since $\ell_{G_{n-1}}(\hat \gamma) \leq p^{n-1}-p^{n-2}$ we have $$(\sigma-1)^{p^{n-1}-p^{n-2}}(\hat \gamma) = \iota_{n-2}^{n-1} (N^{n-1}_{n-2}(\hat \gamma)) = 0,$$ so $N^{n-1}_{n-2}(\hat \gamma) = \{a_{n-2}\}\cdot g$ for some $g \in \Gamma(m,n-1) \subseteq  k_{m-1}E_{n-2}$ by Proposition \ref{prop:Gamma module}.  We claim that $$\hat \gamma' := \hat \gamma - \{a_{n-1}\}\cdot \iota^{n-1}_{n-2}(g)$$ has the desired inclusion, norm and length properties.

To prove this claim, notice first that $\iota_{n-1}^n\left(\{a_{n-1}\}\cdot \iota^{n-1}_{n-2}(g)\right) = 0$ by Exact Sequence (\ref{eq:kummer.for.k.theory}), and hence $\iota^n_{n-1}(\hat \gamma') = \gamma$.  It is also obvious that $N^{n-1}_{n-2}\left(\{a_{n-1}\}\cdot \iota^{n-1}_{n-2}(g)\right) = \{a_{n-2}\}\cdot g$ by the Projection Formula (\ref{eq:projection.formula}), and hence $N^{n-1}_{n-2}(\hat \gamma') = 0$.
For the length condition, notice first that $\ell_{G_{n-1}}(\{a_{n-1}\}\cdot \iota^{n-1}_{n-2}(g)) = \ell_{G_{n-2}}(g)$ by Proposition \ref{prop:Gamma module}(\ref{it:iv}) applied to $\Gamma(m,n-1)$. In view of the properties of length, together with the fact that a preimage of $\gamma$ under $\iota_{n-1}^n$ cannot have $G_{n-1}$-length less than $\ell:=\ell(\gamma) = \ell_{G_{n-1}}(\hat \gamma)$, it will be enough to prove that $\ell \geq \ell_{G_{n-2}}(g)$.  To see that this is true, note that we have $$0 = N_{n-2}^{n-1}\left((\sigma-1)^{\ell}\hat \gamma\right) = (\sigma-1)^{\ell}\left(\{a_{n-2}\}\cdot g\right) = \{a_{n-2}\}\cdot (\sigma-1)^{\ell}g.$$  Applying Proposition \ref{prop:Gamma module}(\ref{it:iv}) again, we have the desired inequality.
\end{proof}

The previous result gives us the fixed submodule under one particular subgroup of $G$.  To find the fixed submodule for the remaining subgroups of $G$, we have the following

\begin{lemma}\label{le:warmup to fixed}
If $N^n_{n-1}(\gamma) = 0$ and $\gamma \in (k_mE_n)^{H_i}$, then there exists $\hat \gamma \in k_mE_i$ such that $\iota^n_i (\hat \gamma) = \gamma$ and $\ell_{G_i}(\hat \gamma) = \ell_G(\gamma)$. Additionally, if $\ell_G(\gamma) \leq p^{i}-p^{i-1}$ we may insist $N^i_{i-1}(\hat \gamma) =0$.
\end{lemma}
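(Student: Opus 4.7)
The plan is to proceed by descending induction on $i$, with base case $i = n-1$ being Lemma \ref{le:warmup to fixed base}.  The idea is that under the stronger hypothesis $\gamma \in (k_mE_n)^{H_i}$, the one-step descent from that lemma can be iterated down to level $i$, with each intermediate step automatically satisfying the fixed-submodule and vanishing-norm requirements.

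The observation driving the iteration is that in characteristic $p$ we have the Frobenius identity $\sigma^{p^j} - 1 = (\sigma - 1)^{p^j}$ in $\F_p[G]$.  This translates the condition ``fixed by $H_j = \langle \sigma^{p^j} \rangle$'' into the purely module-theoretic statement $\ell_G(-) \leq p^j$, so the hypothesis $\gamma \in (k_mE_n)^{H_i}$ is equivalent to $\ell_G(\gamma) \leq p^i$.

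For the inductive step, assume the lemma for $i+1$, and let $\gamma \in (k_mE_n)^{H_i}$ satisfy $N^n_{n-1}(\gamma) = 0$.  Since $H_i \supseteq H_{i+1}$, $\gamma$ is also $H_{i+1}$-fixed, so the inductive hypothesis produces $\hat\gamma_{i+1} \in k_mE_{i+1}$ with $\iota^n_{i+1}(\hat\gamma_{i+1}) = \gamma$ and $\ell_{G_{i+1}}(\hat\gamma_{i+1}) = \ell_G(\gamma) \leq p^i$.  The assumption $p > 2$ gives $p^i \leq p^{i+1} - p^i$, so the auxiliary clause of the inductive hypothesis also yields $N^{i+1}_i(\hat\gamma_{i+1}) = 0$.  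The bound $\ell_{G_{i+1}}(\hat\gamma_{i+1}) \leq p^i$ combined with the Frobenius identity shows that $\hat\gamma_{i+1}$ is fixed by $\Gal(E_{i+1}/E_i)$, so both hypotheses required by Lemma \ref{le:warmup to fixed base} are met at the $E_{i+1}/E_i$ level.  Reapplying the one-step descent (now at the top of the shorter tower $F \subseteq E_1 \subseteq \cdots \subseteq E_{i+1}$) produces $\hat\gamma \in k_mE_i$ with $\iota^{i+1}_i(\hat\gamma) = \hat\gamma_{i+1}$ and $\ell_{G_i}(\hat\gamma) = \ell_G(\gamma)$, together with $N^i_{i-1}(\hat\gamma) = 0$ whenever $\ell_G(\gamma) \leq p^i - p^{i-1}$.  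Composing inclusions gives $\iota^n_i(\hat\gamma) = \gamma$, which closes the induction.

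The hard part will be cleanly justifying the reapplication of Lemma \ref{le:warmup to fixed base} at the intermediate level $E_{i+1}/E_i$.  This requires that the entire supporting apparatus—in particular Proposition \ref{prop:Gamma module} and Corollary \ref{co:longer than you think}—be available for the shorter tower $F \subseteq \cdots \subseteq E_{i+1}$.  Fortunately, those results have been proved by induction on $m$ and $n$, so applying them with $n$ replaced by $i+1$ is legitimate; but the point must be spelled out, and the choice of $\hat\gamma_{i+1}$ via Corollary \ref{cor:choose.your.decomposition} should be made compatible with the inductive decomposition of $k_mE_{i+1}$ so that the verbatim argument of Lemma \ref{le:warmup to fixed base} goes through in the shorter tower.
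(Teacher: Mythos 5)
Your proposal is correct and follows essentially the same approach as the paper: descending induction on $i$ with Lemma \ref{le:warmup to fixed base} as base case, using the inductive hypothesis to descend to $k_mE_{i+1}$ with $N^{i+1}_i(\tilde\gamma)=0$ (since $\ell_G(\gamma)\leq p^i\leq p^{i+1}-p^i$), then reapplying Lemma \ref{le:warmup to fixed base} to the shorter tower $E_{i+1}/F$. The only small inaccuracy is attributing $p^i\leq p^{i+1}-p^i$ to $p>2$ (it holds for all $p\geq 2$), but this is harmless under the paper's standing hypotheses.
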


\begin{proof}
The base case of this result is the previous lemma.

For the inductive step, let $\gamma \in (k_mE_n)^{H_i}$ with $N^n_{n-1}(\gamma) = 0$, and suppose we have the result for $i+1$. Since $(k_mE_n)^{H_{i}} \subseteq (k_mE_n)^{H_{i+1}}$, there exists $\tilde \gamma \in k_mE_{i+1}$ such that $\iota_{i+1}^n(\tilde \gamma) = \gamma$ and $\ell_{G_{i+1}}(\tilde \gamma) = \ell_G(\gamma)$. Furthermore, since $\ell_G(\gamma) \leq p^i \leq p^{i+1}-p^i$ we may insist $N^{i+1}_i(\tilde \gamma) = 0$.  Applying the previous Lemma to the extension $E_{i+1}/E_i$, this implies that there exists $\hat \gamma \in k_mE_i$ such that $\ell_{G_i}(\hat \gamma) = \ell_{G_{i+1}}(\tilde \gamma)$, $\iota_i^{i+1}(\hat \gamma) = \tilde \gamma$, and so that if $\ell_{G_i}(\hat \gamma) \leq p^i-p^{i-1}$ then we may assume $N^i_{i-1}(\hat \gamma) = 0$.  But then we also have $\ell_{G_{i}}(\hat \gamma) = \ell_G(\gamma)$ and $\iota_i^n(\hat \gamma) = \gamma$ as desired.
\end{proof}

We are now ready for the main result of the section.  We shall state it in some generality and then restrict ourselves to a special case in the subsequent corollary.

\begin{proposition}\label{prop:fixedeltsarenorms}
For $\gamma \in k_mE_n$, if
\begin{itemize}
\item $\ell_{H_j}(\gamma)>2p^{n-j-1}$; or if
\item $E_n/E_j$ is embeddable and $\ell_{H_j}(\gamma)>p^{n-j-1}$; or if
\item $N_{n-1}^n(\gamma) = 0$ and $\ell_{H_j}(\gamma)>p^{n-j-1}$,
\end{itemize}
then $(\sigma^{p^j}-1)^{\ell_{H_j}(\gamma)-1}\gamma \in \iota_j^n(N^n_j (k_mE_{n}))$.
\end{proposition}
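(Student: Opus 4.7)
The plan is to decompose $k_mE_n$ as an $\F_p[H_j]$-module by applying the appropriate decomposition theorem to the cyclic extension $E_n/E_j$ of degree $p^{n-j}$, and then to show that $\gamma' := (\sigma^{p^j}-1)^{\ell_{H_j}(\gamma)-1}\gamma$ lies in the sum of the maximal-dimension ($p^{n-j}$-dimensional) indecomposable summands. The key preliminary observation I will use is that in any such maximal $\F_p[H_j]$-indecomposable $W \simeq A_{p^{n-j}}$, the $H_j$-fixed submodule is exactly the socle $(\sigma^{p^j}-1)^{p^{n-j}-1}W$, which equals $\iota^n_j(N^n_j(W))$ via the characteristic-$p$ identity matching the trace sum to $(\sigma^{p^j}-1)^{p^{n-j}-1}$. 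Since $\gamma'$ is $H_j$-fixed by construction, this will immediately give $\gamma' \in \iota^n_j(N^n_j(k_mE_n))$ as soon as I can confine $\gamma'$ to the maximal summands.

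For Cases 1 and 2, my plan is a direct length-counting argument. I will apply Theorem \ref{th:coarse.decomposition} (respectively Theorem \ref{th:embeddable.decomposition}) to $E_n/E_j$ to obtain a decomposition in which the non-maximal summands have dimension at most $2p^{n-j-1}$ (respectively at most $p^{n-j-1}$ in the embeddable case). Then writing $\gamma = \gamma_B + \gamma_S$ under this decomposition, the hypothesis on $\ell_{H_j}(\gamma)$ in each case forces $(\sigma^{p^j}-1)^{\ell_{H_j}(\gamma)-1}\gamma_S = 0$, leaving $\gamma' = (\sigma^{p^j}-1)^{\ell_{H_j}(\gamma)-1}\gamma_B$ supported in the maximal summands as required.

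For Case 3, my plan begins by applying Hilbert 90 for $K$-theory (Proposition \ref{prop:hilbert.90.for.k.theory}) to the degree-$p$ extension $E_n/E_{n-1}$ to write $\gamma = (\sigma^{p^{n-1}}-1)\alpha$ for some $\alpha \in k_mE_n$. Using the Frobenius identity $(\sigma^{p^j}-1)^{p^{n-j-1}} = \sigma^{p^{n-1}}-1$ in $\F_p[G_n]$ (iterated from $(\tau-1)^p = \tau^p - 1$), I can rewrite $\gamma' = (\sigma^{p^j}-1)^{\ell_{H_j}(\gamma)-1+p^{n-j-1}}\alpha$. The hypothesis $\ell_{H_j}(\gamma) > p^{n-j-1}$ now makes the total exponent at least $2p^{n-j-1}$, which annihilates the small-summand part of $\alpha$ in the $\F_p[H_j]$-decomposition of $k_mE_n$ supplied by Theorem \ref{th:coarse.decomposition} alone, so $\gamma'$ again lives in the maximal summands and the preliminary observation finishes the argument.

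The hard part will be Case 2, which as written appears to invoke Theorem \ref{th:embeddable.decomposition}---the very result this proposition feeds into. I anticipate resolving this by organizing the overall argument as an induction on $n$, so that for $j \geq 1$ the subextension $E_n/E_j$ has strictly smaller degree $p^{n-j} < p^n$ and the inductive hypothesis supplies the sharper embeddable decomposition; the degree-$p$ base case is already available from \cite{LMS3}. Apart from navigating this structural issue, the computational core is clean and essentially uniform across the three cases, with Case 3's Hilbert 90 step the only piece of genuinely new content.
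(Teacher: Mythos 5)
Your three-case plan is clean and the socle observation is correct, but the argument is circular at $j=0$, and $j=0$ is precisely the case the paper needs later (the proofs of Theorem \ref{th:coarse.decomposition} and Theorem \ref{th:embeddable.decomposition} each invoke Proposition \ref{prop:fixedeltsarenorms} at $j=0$). To see the circularity, note that your Cases 1 and 3 at $j=0$ require the $\F_p[G]$-decomposition of $k_mE_n$ furnished by Theorem \ref{th:coarse.decomposition} applied to $E_n/E_0$, but that theorem is itself proved by applying Proposition \ref{prop:fixedeltsarenorms} with $j=0$ to the same extension. Your proposed induction on $n$ does not break this circle: for $j\geq 1$ the extension $E_n/E_j$ has strictly smaller degree and the decomposition theorems are available by hypothesis, but at $j=0$ the relevant extension is $E_n/F$ itself, so the inductive hypothesis gives nothing. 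The paper's proof instead runs an induction on $j$ downward from $j=n-1$: the inductive step replaces $\gamma$ by $\delta := (\sigma^{p^j}-1)^{\ell_{H_j}(\gamma)-\varepsilon p^{n-j-1}-1}\gamma$, computes $\ell_{H_{j+1}}(\delta)=\varepsilon p^{n-j-2}+1$, applies the claim at $j+1$ to write $(\sigma^{p^{j+1}}-1)^{\varepsilon p^{n-j-2}}\delta$ as $\iota^n_{j+1}(N^n_{j+1}(\alpha))$ for some $\alpha$, observes that $\alpha$ is not long enough and so pushes $(\sigma^{p^j}-1)\alpha$ through the $j+1$ claim a second time to manufacture a $\beta$ of maximal $H_j$-length, and finishes with an operator computation. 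No decomposition theorem for the top extension is invoked, which is what makes the $j=0$ step legal.

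Two smaller issues. In Case 3 you apply Hilbert 90 directly inside $k_mE_n$, but Proposition \ref{prop:hilbert.90.for.k.theory} is exact in $K_mE$, not $k_mE$: from $N^n_{n-1}(\gamma)=0$ in reduced $K$-theory one only obtains $N^n_{n-1}(\tilde\gamma)=p\tilde f$ after lifting, so Hilbert 90 yields $\tilde\gamma-\tilde f=(\sigma^{p^{n-1}}-1)\tilde\alpha$ with a correction term $\tilde f\in K_mE_{n-1}$ that must be argued away (it is, since $\iota^n_{n-1}(k_mE_{n-1})$ is $H_{n-1}$-fixed and you go on to apply at least $p^{n-j-1}$ further powers of $\sigma^{p^j}-1$). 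Also, \cite{LMS3} supplies only the $\ell_{H_{n-1}}(\gamma)>2$ part of the degree-$p$ base case: the $\ell_{H_{n-1}}(\gamma)=2$ embeddable sub-case requires Albert's theorem to put $\xi_p$ into $N^n_{n-1}(E_n^\times)$, and the $\ell_{H_{n-1}}(\gamma)=2$, $N^n_{n-1}(\gamma)=0$ sub-case requires exactly the $K$-theory Hilbert 90 lifting just described, so the base case is not simply "already available" but needs the same additional content the paper supplies. Finally, the paper's inductive step requires separate treatment of $p=3$, $j=n-2$ because the inequality $p^{n-j-1}-1>2p^{n-j-2}$ fails there; any repair of your $j=0$ gap would need to confront this boundary case as well.
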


\begin{proof}
To prove the claim we proceed by induction on $j$.  The base case is $j=n-1$.  \cite[Lemma 2]{LMS3} verifies that $\ell_{H_{n-1}}(\gamma)>2$ gives the desired conclusion, and additionally shows that $$\im(\sigma^{p^{n-1}}-1) \cap (k_mE_n)^{H_{n-1}} = \iota^n_{n-1}\left(\{\xi_p\}\cdot k_{m-1}E_{n-1}\right) + \iota_{n-1}^n(N_{n-1}^n(k_mE_n)).$$

So suppose that $\ell_{H_{n-1}}(\gamma) = 2$.  In the case that $E_n/E_{n-1}$ is embeddable, Albert's Theorem \cite{A} shows that $\xi_p \in N_{n-1}^n(E_n^\times)$.  The Projection Formula (\ref{eq:projection.formula}) then gives $$\iota_{n-1}^n\left(\{\xi_p\}\cdot k_{m-1}E_{n-1}\right) \subseteq \iota_{n-1}^n(N_{n-1}^n(k_mE_n)).$$ Hence if $E_n/E_{n-1}$ is embeddable and $\ell_{H_{n-1}}(\gamma) = 2$, we are done.

We have left to consider the case where $\ell_{H_{n-1}}(\gamma) = 2$ and $N_{n-1}^n(\gamma) = 0$.  Considering this equation in $K_mE$, we have $N_{n-1}^n(\tilde \gamma) = p\tilde f$ for some $\tilde f \in K_mE_{n-1}$ and preimage $\tilde \gamma \in K_mE_n$ of $\gamma \in k_mE_n$.  Hence we have $N_{n-1}^n(\tilde \gamma-\tilde f) = 0$ as elements of $K_mE_{n-1}$, and so Hilbert 90 for $K$-theory (\ref{prop:hilbert.90.for.k.theory}) implies that there exists $\tilde \alpha \in K_mE_n$ with \begin{equation}\label{eq:hilbert.90.for.fixed.elements.are.norms}\tilde \gamma - \tilde f = (\sigma^{p^{n-1}}-1)\tilde \alpha.\end{equation}  Considering that $\ell_{H_{n-1}}(\gamma) = 2$, we can apply $(\sigma^{p^{n-1}}-1)$ to Equation (\ref{eq:hilbert.90.for.fixed.elements.are.norms}) to give
\begin{equation*}\begin{split}
(\sigma^{p^{n-1}}-1)\tilde \gamma & = (\sigma^{p^{n-1}}-1)\left((\sigma^{p^{n-1}}-1)\tilde \alpha + \tilde f\right) \\&= (\sigma^{p^{n-1}}-1)^2 \tilde \alpha 
\end{split}\end{equation*} The element $\alpha \in k_mE_n$ represented by $\tilde \alpha$ therefore has $\ell_{H_{n-1}}(\alpha) = 3$, and so we appeal to the initial case to show $$(\sigma^{p^{n-1}}-1)^2 \alpha = (\sigma^{p^{n-1}}-1)\gamma \in \iota^n_{n-1}(N^n_{n-1}(k_mE_n)),$$ as desired.

Having settled the base case, we have also completed the case $n=1$.  Now suppose that $n \geq 2$ and the result holds for $j+1$, and we show it also holds for $j$.  For simplicity we let $\varepsilon = 1$ if either $E_n/E_j$ is embeddable or $N^n_{n-1}(\gamma)=0$, and let $\varepsilon = 2$ if both $N^n_{n-1}(\gamma) \neq 0$ and $E_n/E_j$ is not embeddable.  Let $\gamma$ be an arbitrary element with $\ell_{H_j}(\gamma) > \varepsilon p^{n-j-1}$, and consider the element
$$\delta:=(\sigma^{p^j}-1)^{\ell_{H_j}(\gamma)-\varepsilon p^{n-j-1}-1}\gamma.$$ It is easy to see that $\ell_{H_j}(\delta) = \varepsilon p^{n-j-1}+1$ and that $$(\sigma^{p^j}-1)^{\varepsilon p^{n-j-1}}\delta = (\sigma^{p^j}-1)^{\ell_{H_j}(\gamma)-1}\gamma.$$  Hence if we can show $(\sigma^{p^j}-1)^{\varepsilon p^{n-j-1}}\delta \in \iota^n_j(N^n_j(k_mE_n))$, then we will be done.

Since $\ell_{H_j}(\delta) = \varepsilon p^{n-j-1}+1$ we have 
\begin{eqnarray*}
(\sigma^{p^{j+1}}-1)^{\varepsilon p^{n-1-j-1}+1}\delta =
(\sigma^{p^j}-1)^{\varepsilon p^{n-j-1}+p}\delta =
0 &\quad \mbox{and}\\
(\sigma^{p^{j+1}}-1)^{\varepsilon p^{n-1-j-1}}\delta =
(\sigma^{p^j}-1)^{\varepsilon p^{n-j-1}}\delta \neq 0.&
\end{eqnarray*}
Hence we have
$\ell_{H_{j+1}}(\delta) = \varepsilon p^{n-1-j-1}+1$.  Note that if $N^n_{n-1}(\gamma) = 0$ then $N^n_{n-1}(\delta) = 0$, and that if $E/E_j$ is embeddable then so too is $E/E_{j+1}$.  Hence by induction it follows that $$(\sigma^{p^{j+1}}-1)^{\varepsilon p^{n-1-j-1}} \delta =
\iota_{j+1}^n (N^n_{j+1} (\alpha))$$ for some $\alpha \in k_mE_n$, or equivalently \begin{equation}\label{eq:bottom of delta}(\sigma^{p^j}-1)^{\varepsilon p^{n-1-j}}\delta = (\sigma^{p^j}-1)^{p^{n-j}-p}\alpha.\end{equation}

Unfortunately, $\alpha$ does not generate a submodule long enough to provide our desired equality.  Instead of being length $p^{n-j}-1$ we have $\ell_{H_j}(\alpha) = p^{n-j}-p+1$:
\begin{equation*}
\begin{split}
(\sigma^{p^{j}}-1)^{p^{n-j}-p}\alpha &=
(\sigma^{p^{j}}-1)^{\varepsilon p^{n-1-j}}\delta \neq 0 \quad \mbox{and}\\
(\sigma^{p^{j}}-1)^{p^{n-j}-p+1}\alpha &= (\sigma^{p^j}-1)^{\varepsilon p^{n-1-j}+1} \delta = 0.
\end{split}
\end{equation*}

We use induction to show that the $H_{j+1}$-fixed part of the $\F_p[H_{j+1}]$-submodule $\langle(\sigma^{p^j}-1)\alpha\rangle$ is generated by some $\iota^n_{j+1}(N^n_{j+1}(\beta))$, which will ultimately provide the desired result.  With this goal in mind, we compute $\ell_{H_{j+1}}\left((\sigma^{p^j}-1)\alpha\right)$.  First, we have $$(\sigma^{p^{j+1}}-1)^{p^{n-j-1}-2}(\sigma^{p^j}-1)\alpha = (\sigma^{p^j}-1)^{p^{n-j}-2p+1}\alpha \neq 0,$$ where the inequality follows from the fact that $\ell_{H_j}(\alpha) = p^{n-j}-p+1 > p^{n-j}-2p+1$.  We also have
$$(\sigma^{p^{j+1}}-1)^{p^{n-j-1}-1}(\sigma^{p^j}-1)\alpha = (\sigma^{p^j}-1)^{p^{n-j}-p+1}\alpha = 0,$$ again using $\ell_{H_j}(\alpha) = p^{n-j}-p+1$. Hence we have $\ell_{H_{j+1}}\left((\sigma^{p^j}-1)\alpha\right) = p^{n-j-1}-1$.

Provided $p \neq 3$ or $j \neq n-2$ we have $p^{n-j-1}-1 >2 p^{n-1-j-1}$, so by induction we have $$(\sigma^{p^{j+1}}-1)^{p^{n-j-1}-2} (\sigma^{p^j}-1)\alpha = \iota^n_{j+1}(N^n_{j+1}(\beta))=(\sigma^{p^{j+1}}-1)^{p^{n-j-1}-1}\beta$$ for some $\beta \in k_mE_{j+1}$.  Equivalently, this means \begin{equation}\label{eq:bottom.of.module.is.fixed.part.of.super.long.module}(\sigma^{p^j}-1)^{p^{n-j}-2p}(\sigma^{p^j}-1)\alpha = (\sigma^{p^j}-1)^{p^{n-j}-p}\beta.\end{equation} Hence, recalling Equation (\ref{eq:bottom of delta}) for equality $\star$ below, we have the desired result:
\begin{equation*}\begin{split}
\iota^n_j(N^n_j(\beta)) &=(\sigma^{p^{j}}-1)^{p^{n-j}-1} \beta \\&= (\sigma^{p^j}-1)^{p-1}(\sigma^{p^{j}}-1)^{p^{n-j}-p}\beta\\&=
(\sigma^{p^j}-1)^{p-1}(\sigma^{p^{j}}-1)^{p^{n-j}-2p}(\sigma^{p^j}-1)\alpha\\&=
(\sigma^{p^j}-1)^{p^{n-j}-p} \alpha \\&\stackrel{\star}{=}
(\sigma^{p^{j}}-1)^{\varepsilon p^{n-1-j}}\delta.
\end{split}\end{equation*}

Finally, suppose that $p=3$ and $j=n-2$.  In this case $\ell_{H_{n-2}}(\alpha) = 7$, so that $(\sigma^{3^{n-2}}-1)^6 \alpha \in (k_mE_n)^{H_{n-2}}$.  We also know that $(\sigma^{3^{n-2}}-1)^6\alpha = (\sigma^{3^{n-1}}-1)^2 \alpha = \iota^n_{n-1}(N^n_{n-1}(\alpha))$, so that $(\sigma^{3^{n-2}}-1)^6\alpha \in \ker(N^n_{n-1})$.  Hence Lemma \ref{le:warmup to fixed} gives  $(\sigma^{3^{n-2}}-1)^6\alpha = \iota^n_{n-1}\left(N^n_{n-1}\left(\alpha\right)\right) = \iota^n_{n-2}(h)$ for some $h \in k_mE_{n-2}$, and so there exists $g \in \Gamma(m,n)$ so that $$N^n_{n-1}(\alpha) = \{a_{n-1}\}\cdot g + \iota^{n-1}_{n-2}(h).$$  Now \cite[Prop.~7]{MSS1} provides an element $\chi \in k_1E_n$ with $\ell_{H_{n-1}}(\chi) \leq 2$ and so that $N^n_{n-1}(\chi) = a_{n-1}$.  Note that $g \in k_{m-1}E_{n-1}$ gives
\begin{equation*}
\begin{split}
(\sigma^{3^{n-2}}-1)^{6}\left(\{\chi\}\cdot\iota^n_{n-1}(g)\right) &=(\sigma^{3^{n-1}}-1)^2\left(\{\chi\}\cdot \iota^n_{n-1}(g)\right) \\&= \left((\sigma^{3^{n-1}}-1)^2\{\chi\}\right)\cdot \iota^n_{n-1}(g) = 0.
\end{split}
\end{equation*}
Set $\alpha' = (\sigma^{3^{n-2}}-1)\left(\alpha -\{\chi\}\cdot \iota^n_{n-1}(g)\right)$.  Since $\ell_{H_{n-2}}(\alpha) = 7$ and $\ell_{H_{n-2}}(\{\chi\}\cdot\iota^n_{n-1}(g)) = 6$, this leaves $(\sigma^{3^{n-2}}-1)^5\alpha' = (\sigma^{3^{n-2}}-1)^6 \alpha$; from this it follows that  $\ell_{H_{n-1}}(\alpha') = 2$.  We also have $$N^n_{n-1}\left(\alpha'\right) = (\sigma^{3^{n-2}}-1)\left(\{a_{n-1}\}\cdot g + \iota^{n-1}_{n-2}(h) - \{a_{n-1}\}\cdot g\right)= 0.$$  Hence by induction there exists some element $\beta$ with $$(\sigma^{3^{n-1}}-1)\alpha' = \iota^n_{n-1}\circ N^n_{n-1}\beta = (\sigma^{3^{n-1}}-1)^2\beta = (\sigma^{3^{n-2}}-1)^{6}\beta,$$  which gives
\begin{equation*}
\begin{split}
\iota_{n-2}^n(N^n_{n-2}(\beta)) 
&= (\sigma^{3^{n-2}}-1)^8\beta \\
&= (\sigma^{3^{n-2}}-1)^2(\sigma^{3^{n-2}}-1)^6\beta\\
&= (\sigma^{3^{n-2}}-1)^2 (\sigma^{3^{n-1}}-1)\alpha'\\
&= (\sigma^{3^{n-2}}-1)^6 \alpha\\
&= (\sigma^{3^{n-2}}-1)^{3\varepsilon}\delta.
\end{split}
\end{equation*}
\end{proof}

\begin{corollary}\label{cor:fixedeltsarenorms}
For $\gamma \in k_mE_n$, let $i$ be minimal such that $\gamma \in \iota^n_i(k_mE_i)$.  If $N^n_{n-1}(\gamma) = 0$ and $\ell_G(\gamma)>p^{i-1}$, then $(\sigma-1)^{\ell_G(\gamma)-1}\gamma \in \iota^n_0(N^i_0(k_mE_i))$.
\end{corollary}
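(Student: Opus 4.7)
The plan is to reduce the claim to Proposition \ref{prop:fixedeltsarenorms} applied to the smaller cyclic extension $E_i/E_0$, by lifting $\gamma$ through $\iota^n_i$ in a length-preserving way and then pushing the resulting conclusion back up through $\iota^n_i \circ \iota^i_0 = \iota^n_0$. I will first dispatch the trivial edge case $i=0$: there $\gamma$ lies in $\iota^n_0(k_mE_0) = \iota^n_0(N^0_0(k_mE_0))$ automatically, so the conclusion holds. From now on assume $i \geq 1$.

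For $i \geq 1$, the key is Lemma \ref{le:warmup to fixed}. Since $\gamma \in \iota^n_i(k_mE_i) \subseteq (k_mE_n)^{H_i}$ and $N^n_{n-1}(\gamma)=0$ by hypothesis, that lemma produces some $\hat\gamma \in k_mE_i$ with $\iota^n_i(\hat\gamma)=\gamma$ and $\ell_{G_i}(\hat\gamma) = \ell_G(\gamma)$, and it further permits $N^i_{i-1}(\hat\gamma) = 0$ whenever $\ell_G(\gamma) \leq p^i - p^{i-1}$. Then I plan to apply Proposition \ref{prop:fixedeltsarenorms} to $\hat\gamma$ in the tower $E_i/E_0$ (which satisfies the same hypotheses $\Gal(E_i/E_0) \simeq \Z/p^i\Z$, $\xi_p \in E_i$, $p>2$) with $j=0$, and deduce that $(\sigma-1)^{\ell_G(\gamma)-1}\hat\gamma \in \iota^i_0(N^i_0(k_mE_i))$. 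Applying $\iota^n_i$ and using $\iota^n_i \circ \iota^i_0 = \iota^n_0$ will immediately give the desired conclusion.

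The application of Proposition \ref{prop:fixedeltsarenorms} splits into two cases. In the first, $\ell_G(\gamma) \leq p^i - p^{i-1}$, so Lemma \ref{le:warmup to fixed} supplies $N^i_{i-1}(\hat\gamma)=0$, and since $\ell_{G_i}(\hat\gamma) = \ell_G(\gamma) > p^{i-1}$, the third bullet of the Proposition applies. In the second, $\ell_G(\gamma) > p^i - p^{i-1} = (p-1)p^{i-1}$; because $p \geq 3$ forces $(p-1)p^{i-1} \geq 2p^{i-1}$, we then have $\ell_{G_i}(\hat\gamma) > 2p^{i-1}$, so the first bullet of the Proposition applies. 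Either way, the same conclusion about $(\sigma-1)^{\ell_G(\gamma)-1}\hat\gamma$ falls out.

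The main obstacle I expect is verifying that these two cases genuinely cover every admissible length, with no gap in which neither bullet of Proposition \ref{prop:fixedeltsarenorms} is available. The linchpin is the inequality $(p-1)p^{i-1} \geq 2p^{i-1}$ valid for $p \geq 3$: once $\ell_G(\gamma)$ is too large for Lemma \ref{le:warmup to fixed} to guarantee $N^i_{i-1}(\hat\gamma)=0$, it has already crossed the threshold $2p^{i-1}$ required by the first bullet. This is precisely where the assumption $p>2$ enters the argument, matching the overall standing hypothesis of the paper.
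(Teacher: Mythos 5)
Your proposal is correct and follows essentially the same route as the paper: lift $\gamma$ to a length-preserving $\hat\gamma \in k_m E_i$ via Lemma \ref{le:warmup to fixed}, apply Proposition \ref{prop:fixedeltsarenorms} to the subextension $E_i/E_0$ with $j=0$, and push forward through $\iota^n_i$. The one difference worth noting is which bullet of Proposition \ref{prop:fixedeltsarenorms} you invoke: the paper's proof observes that for $i<n$ the extension $E_i/E_0$ is automatically embeddable (being a proper subextension of the degree-$p^n$ tower), so it can always use the second bullet without any case division, handling $i=n$ separately via the third bullet. You instead avoid appealing to embeddability entirely by splitting on $\ell_G(\gamma)$, using the third bullet when $\ell_G(\gamma) \leq p^i - p^{i-1}$ (where the lemma additionally yields $N^i_{i-1}(\hat\gamma)=0$) and the first bullet otherwise, and the inequality $(p-1)p^{i-1}\geq 2p^{i-1}$ for $p\geq 3$ ensures the two cases exhaust the range. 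Both arguments are valid; yours is slightly more uniform across all $i$, while the paper's is shorter because it leverages the free embeddability of proper subextensions.
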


\noindent \emph{Note:} When $i< n$, the condition $N^n_{n-1}(\gamma) = 0 $ is trivial.

\begin{proof}In the case $i=n$, the result follows by taking $j=0$ in the previous proposition.  For $i<n$, choose $\hat \gamma \in k_m E_i$ with $\iota^n_i (\hat \gamma) = \gamma$; by Lemma \ref{le:warmup to fixed} we can insist $\ell_{G_i}(\hat \gamma) = \ell_G(\gamma)$.  Then $\ell_{G_i}(\hat \gamma) > p^{i-1}$, and since $E_i/E_0$ is embeddable the previous proposition applied to the extension $E_i/E_0$ gives $$(\sigma-1)^{\ell_{G_i}(\hat \gamma)-1} \hat \gamma\in \iota_0^i(N^i_0(k_mE_i )).$$ Therefore \begin{equation*}\begin{split}(\sigma-1)^{\ell_G(\gamma)-1}\gamma
= \iota_i^n\left((\sigma-1)^{\ell_{G_i}(\hat \gamma)-1}\hat \gamma\right) & \subseteq  \iota^n_i
\left(\iota^i_0\left(N^i_0\left(k_mE_i\right)\right)\right) \\ &= \iota_0^n(N^i_0(k_mE_i))\end{split}\end{equation*} as desired.
\end{proof}

We are now ready to give the ``sparse'' $\F_p[G]$-decomposition of $k_mE_n$ provided by Theorem \ref{th:coarse.decomposition}.

\begin{proof}[Proof of Theorem \ref{th:coarse.decomposition}]
Using the notation and results from the proof of Corollary \ref{cor:choose.your.decomposition}, we only need to verify that $V_{i+1} = V_{p^n}$ for every $i$ satisfying $2p^{n-1}+1 \leq i \leq p^n-1$. This means that we must show that for any $x \in \im(\sigma-1)^{i-1}\cap (k_mE_n)^G$, we also have $x \in \im(\sigma-1)^{p^n-1}$.

Choose an $\alpha_x$ with $(\sigma-1)^{i-1}\alpha_x =x$.  Then $\ell_G(\alpha_x) = i$, and since $i > 2p^{n-1}$ we may apply Proposition \ref{prop:fixedeltsarenorms} (with $j=0$) to conclude that $$x=(\sigma-1)^{i-1}\alpha_x = \iota^n_0(N^n_0(\alpha)) = (\sigma-1)^{p^{n}-1}\alpha$$ for some $\alpha \in k_mE$.  Hence $x \in V_{p^n}$ as desired.
\end{proof}

\section{Proof of Theorem \ref{th:embeddable.decomposition}}\label{sec:proof.of.theorem}

We are now prepared to prove the main result of this paper.  Though the machinery developed thus far applies to all extensions $E/F$ with $\Gal(E/F) \simeq \Z/p^n\Z$ and $\xi_p \in E$ --- assuming that $p>2$ is prime --- the main theorem relies critically on the existence of an exceptional element $a_n$ of $E/F$ which satisfies $a_n^{\sigma-1} \in E^{\times p}$.  More specifically, we use this condition to construct modules $X_i$ which appear in the theorem; this is the only place where the embeddable condition is used. 




Let $a_n$ be an arbitrary exceptional element of $E/F$; Proposition \ref{prop:exceptional.element.criteria} gives a $t$ so that $N^n_{n-1}(a_n^t) \in a_{n-1}E_{n-1}^{\times p}$.  We define the module $X$ as $\{a_n^t\}\cdot \iota^n_{n-1}(\Gamma(m,n))$, and claim that our embeddable condition implies $X \simeq \Gamma(m,n)$ as $\F_p[G]$-modules (the $\F_p[G]$-action on $\Gamma(m,n)$ is induced from its $\F_p[G_{n-1}]$-action).  Since Proposition \ref{prop:Gamma module} shows $\Gamma(m,n) = \oplus_{i=0}^{n-1} \mathcal{Z}_i$, where $\mathcal{Z}_i \subseteq  \iota^n_i(k_mE_i)$ is a direct sum of cyclic submodules of dimension $p^i$, the $\F_p[G]$-isomorphism $X \simeq \Gamma(m,n)$ will be enough to show that the $X_i$ satisfy the necessary conditions.

To show $X \simeq \Gamma(m,n)$, first notice that the Projection Formula (\ref{eq:projection.formula}) shows that $N^n_{n-1}(X) = \{a_{n-1}\}\cdot\Gamma(m,n)$.  To see that $\ker(N^n_{n-1}) \cap X = \{0\}$, notice that for nonzero $g \in \Gamma(m,n)$ we have $N^n_{n-1}\left(\{a_n^t\}\cdot \iota^n_{n-1}(g)\right)= \{a_{n-1}\}\cdot g \neq 0$ by Proposition \ref{prop:Gamma module}(\ref{it:iv}).  Finally, the action of $\sigma$ commutes with $N^n_{n-1}$ and is trivial on $a_{n-1}$ (by (\ref{eq:ai.are.fixed})) as well as $a_n$ (by our embeddability condition together with Proposition \ref{prop:exceptional.element.criteria}).  Hence $N^n_{n-1}$ gives an $\F_p[G]$-isomorphism between $X$ and $\{a_{n-1}\}\cdot \Gamma(m,n)$.  Proposition \ref{prop:Gamma module}(\ref{it:iv}) has already established that $\{a_{n-1}\}\cdot \Gamma(m,n) \simeq \Gamma(m,n)$ as $\F_p[G_{n-1}]$-modules, completing the proof of the claim.

Now let $\mathcal{I}_n$ be an $\F_p$-basis for $\iota_{0}^n(N_0^n(k_mE_n))$, and for each $0 \leq i < n$ let $\mathcal{I}_i$ be an $\F_p$-basis for a complement of $\iota_0^n(N^{i+1}_0(k_mE_{i+1}))$ within $\iota_0^n(N^i_0(k_mE_i))$.  For each $x \in \mathcal{I}_i$, $1 \leq i \leq n$, choose an element $\alpha_x \in k_mE_i$ so that $x = \iota_0^n(N^i_0(\alpha_x))$, and define $Y_i = \sum_{x \in \mathcal{I}_i} \langle \alpha_x \rangle$.

As in the proof of Corollary \ref{cor:choose.your.decomposition}, the generator $\alpha_x$ corresponding to $x \in \mathcal{I}_i$ has $$\langle \alpha_x \rangle^G = \langle \iota^n_0(N^i_0(\alpha_x)) \rangle = \langle (\sigma-1)^{p^i-1}\alpha_x \rangle = \langle x \rangle.$$ By construction, the elements of $\cup_i \mathcal{I}_i$ are $\F_p$-independent, and so the Exclusion Lemma (\ref{le:exclusion}) shows $$\sum_{i=0}^{n}\sum_{x \in \mathcal{I}_i} \langle \alpha_x \rangle =  \bigoplus_{i=0}^{n}\bigoplus_{x \in \mathcal{I}_i} \langle \alpha_x \rangle.$$  Since $\iota_0^n \circ N^i_0$ has the same action on $\iota_i^n(k_mE_i)$ as $(\sigma-1)^{p^i-1}$, the modules $Y_i$ satisfy the appropriate conditions.

We have left to show that the $X_i$ modules are independent from the $Y_i$ modules.  The Exclusion Lemma (\ref{le:exclusion}) says we can check independence by looking at the intersection of the corresponding fixed modules.  Recall, however, that $X^G \cap \ker(N^n_{n-1}) = \{0\}$, whereas $Y_i^G \subseteq  \iota_0^n(N^i_0(k_mE_i)) \subseteq  \ker(N^n_{n-1})$.  Hence we conclude that
$$J = \left(\bigoplus_{i=0}^{n-1} X_i\right) + \left(\bigoplus_{i=0}^n Y_i\right) = \left(\bigoplus_{i=0}^{n-1} X_i\right) \oplus \left(\bigoplus_{i=0}^n Y_i\right).$$


Our goal is to show that $k_mE_n = J.$  To do this, recall the notation $V_\ell = \im\left((\sigma-1)^{\ell-1}\right) \cap (k_mE_n)^G$. We shall prove that for each $0 \leq i \leq n$ and $1 \leq j \leq p^{i+1}-p^i$,
\begin{equation}\label{eq:collapsing.of.filtration}
V_{p^i+j} \subseteq  \im\left((\sigma-1)^{p^{i+1}-1}\right) \cap J^G.
\end{equation}  Inasmuch as the right side of this expression is visibly in $V_{p^{i+1}}$, and since we have $V_{p^{i+1}} \subseteq  V_{p^i+j}$ automatically, this condition will ensure that $V_{p^i+j} = V_{p^{i+1}}$.  According to Corollary \ref{cor:general.decomposition}, this implies that the module structure of $k_mE_n$ will contain only cyclic summands of dimension $p^k$, $0 \leq k \leq n$.  Condition (\ref{eq:collapsing.of.filtration}) will also show that $$V_{p^{i}} = \im\left((\sigma-1)^{p^i-1}\right) \cap J^G = \bigoplus_{k \geq i} X_k^G \oplus \bigoplus_{k \geq i} Y_k^G,$$ from which our construction of the summands $X_i$ and $Y_i$, together with Corollary \ref{cor:choose.your.decomposition}, will show that $k_mE_n \simeq J$.

To verify this condition, suppose that $f = (\sigma-1)^{p^i+j-1}\gamma \in (k_mE_n)^G$.  Now if $p^i+j > p^n-p^{n-1}$, then this implies $\ell_G(\gamma) > 2p^{n-1}$.  Hence taking $j=0$ in Proposition \ref{prop:fixedeltsarenorms} shows that $$f \in \iota_0^n(N_0^n(k_mE_n)) = \im\left((\sigma-1)^{p^n-1}\right) \cap (k_mE_n)^G.$$  In this case recall that $Y_n^G = \langle \mathcal{I}_n \rangle = \iota^n_0(N^n_0(k_mE_n))$ by construction, and so $f \in Y_n^G \subseteq  \im\left((\sigma-1)^{p^{n}-1}\right) \cap J^G$ as desired. Otherwise we have $p^i+j \leq p^n-p^{n-1}$, meaning that $(\sigma-1)^{p^n-p^{n-1}}\gamma = \iota_{n-1}^n(N^n_{n-1}(\gamma)) = 0$.  Hence from Exact Sequence (\ref{eq:kummer.for.k.theory}) we must be in the case that $N^n_{n-1}(\gamma)  = \{a_{n-1}\}\cdot \iota_{n-1}^n(g)$, where $g \in \Gamma(m,n)$.

By construction of the module $X$, there exists a unique $x \in X$ --- possibly zero --- so that $N^n_{n-1}(x) = N^n_{n-1}(\gamma)$. Moreover, since $X \simeq \Gamma(m,n)$ we must have $\ell_G(x) = \ell_{G_{n-1}}(g)$.  Notice that since \begin{equation*}\begin{split}0&=N^n_{n-1}\left((\sigma-1)^{\ell(\gamma)} \gamma\right) 
= (\sigma-1)^{\ell(\gamma)} \left(\{a_{n-1}\}\cdot \iota^n_{n-1}(g)\right)\end{split}\end{equation*} and $\{a_{n-1}\}\cdot \Gamma(m,n) \simeq \Gamma(m,n)$ by Proposition \ref{prop:Gamma module}, we must then have $\ell_G(x) = \ell_{G_{n-1}}(g) \leq \ell_{G}(\gamma)$.  Hence the element $\gamma - x$ has trivial image under the map $N^n_{n-1}$, and moreover $\ell_G(\gamma - x) \leq \max\{\ell_G(\gamma),\ell_G(x)\} = \ell_G(\gamma)$.

Suppose first that $\ell_G(\gamma-x) < \ell_G(\gamma)$.  In this case it follows that $\ell_G(x) = \ell_G(\gamma)$, and indeed that $f = (\sigma-1)^{p^i+j-1}x$.  Hence we have $f \in \im\left((\sigma-1)^{p^i+j-1}\right) \cap X^G$.  But notice that since $X$ is a direct sum of cyclic submodules of dimension $p^k$, where $0 \leq k \leq n-1$, this in turn implies that $$f \in \im\left((\sigma-1)^{p^{i+1}-1}\right) \cap X^G \subseteq  \im\left((\sigma-1)^{p^{i+1}-1}\right) \cap J^G.$$

Finally, we are left with the case that $\ell_G(\gamma-x) = \ell_G(\gamma)$.  In this case we have $\gamma - x \in \ker(N^n_{n-1}) \cap \left((k_mE_n)^{H_{i+1}} \setminus (k_mE_n)^{H_i}\right)$.  Hence Lemma \ref{le:warmup to fixed} and the fact that $\iota_i^n(k_mE_i) \subseteq (k_mE_n)^{H_i}$ implies that $\gamma - x \in \im(\iota^n_{i+1})\setminus \im(\iota^n_{i})$, and Corollary \ref{cor:fixedeltsarenorms} shows that $(\sigma-1)^{p^i+j-1}(\gamma-x) = \iota_0^n(N^{i+1}_0(\alpha))$ for some $\alpha \in k_mE_{i+1}$, so that $$f = (\sigma-1)^{p^i+j-1}\big(x+(\gamma-x)\big) = (\sigma-1)^{p^i+j-1}x + \iota_0^n(N^{i+1}_0(\alpha)).$$

Considering that $\iota^n_0 \circ N^{i+1}_0$ is represented by $(\sigma-1)^{p^{i+1}-1}$ on $\iota_{i+1}^n(k_mE_{i+1})$, it is easy to see that $$\iota_0^n(N^{i+1}_0(\alpha)) \in \im\left((\sigma-1)^{p^{i+1}-1}\right) \cap J^G.$$  On the other hand, since $\ell_G(x) \leq \ell_G(\gamma)$ we see that $(\sigma-1)^{p^i+j-1}x \in \im\left((\sigma-1)^{p^i+j-1}\right) \cap X^G$; since $X$ is composed of cyclic indecomposables of prime-power dimension, it therefore follows that $$(\sigma-1)^{p^i+j-1}x \in \im\left((\sigma-1)^{p^{i+1}-1}\right) J^G.$$  Combining these two observations, we have $f\in \im\left((\sigma-1)^{p^{i+1}-1}\right) \cap J^G$ as desired.  

\comment{
Before we move to the proof of Theorem \ref{th:p.adic.decomposition}, we note that nearly all of our notation carries over unchanged from the finite case.  Indeed, the only significant change is that the top field $E$ is now denoted $E_\infty$ to indicate that it is the unique extension with $E/F$ with $[E:F] = \infty$.  We'll again use $\sigma$ to denote a generator of $\Gal(E/F) \simeq \Z_p$, with closed subgroups written as $H_n$ and quotient groups of $\Gal(E/F)$ by these closed subgroups as $G_n$.  We also point out that the results we described for $\F_p[\Z/p^n\Z]$-modules from section \ref{sec:preliminaries} also have natural analogues in the case of $\F_p[\Z_p]$-modules; the generalizations should be clear, so we do not bother to state them.

One important point to notice when working in $k_mE_\infty = \underleftarrow{\lim}~(k_mE_i)$ is that for every element $\gamma \in k_mE_\infty$ there exists an integer $n$ and an element $\hat \gamma \in k_mE_n$ with $\iota_n^\infty(\hat \gamma) = \gamma$ and so that $\ell_G(\gamma) = \ell_{G_n}(\hat \gamma)$.  In a similar way, an equality of elements $\gamma = \alpha$ in $k_mE_\infty$ gives rise to an integer $n$ and elements $\hat \gamma,\hat \alpha \in k_mE_n$ such that $\iota_n^\infty(\hat \gamma) = \gamma$, $\iota_n^\infty(\hat \alpha) = \alpha$ and so that $\hat \gamma = \hat \alpha$ in $k_mE_n$.

\begin{proof}[Proof of Theorem \ref{th:p.adic.decomposition}]
For each $0 \leq i$, choose $\mathcal{I}_i$ as a complement to $\iota_0^\infty(N^{i+1}_0(k_mE_{i+1}))$ within $\iota_0^\infty(N^i_0(k_mE_i))$.  For each $x \in \mathcal{I}_i$, choose an element $\alpha_x \in k_mE_i$ so that $\iota_0^\infty(N^i_0(\alpha_x)) = x$, and put $Y_i = \sum_{x \in \mathcal{I}_i} \langle \alpha_x \rangle_{\F_p[G]}$.  Since the fixed submodule of each $\alpha_x$ is generated by $x$, and since $\cup_i \mathcal{I}_i$ are chosen to be $\F_p$-independent by construction, the Exclusion Lemma (\ref{le:exclusion}) shows that each $Y_i$ is a direct sum of dimension $p^i$ cyclic submodules, and that $\sum_i Y_i = \oplus_i Y_i$.

Since the $\F_p[G]$-decomposition of $k_mE_\infty$ depends on the filtration $$\cdots \subseteq  V_\ell \subseteq  \cdots \subseteq  V_2 \subseteq  V_1 = (k_mE_\infty)^G,$$ we will show that $k_mE_\infty = \oplus_i Y_i$ by verifying for each $0 \leq i$ and $1 \leq j \leq p^{i+1}-p^i$ that
\begin{equation}\label{eq:p.adic.collapsing.of.filtration}
V_{p^i+j} \subseteq  \im\left((\sigma-1)^{p^{i+1}-1}\right) \cap \left(\bigoplus_{k\geq 0} Y_k^G\right) = \oplus_{k>i}Y_k^G.
\end{equation}  Considering that the right-hand side of this expression is clearly in $V_{p^{i+1}}$, and since the containment $V_{p^{i+1}} \subseteq  V_{p^i+j}$ is automatic, this equation will show that the $\F_p[G]$-summands of $k_mE_\infty$ are all cyclic of prime-power dimension $p^k$, where $0 \leq k$.  Furthermore condition (\ref{eq:p.adic.collapsing.of.filtration}) will give
$$V_{p^{i}} = \im\left((\sigma-1)^{p^{i}-1}\right) \cap \left(\bigoplus_{k\geq i} Y_k^G\right),$$ from which our construction of the submodules $Y_i$ will show that $k_mE_\infty \simeq \oplus_i Y_i$ by an appropriate analogue of Corollary \ref{cor:choose.your.decomposition}.

So suppose that $f = (\sigma-1)^{p^i+j-1} \gamma$.  This implies the existence of an integer $n$ and elements $\hat f, \hat \gamma \in k_mE_n$ so that $\iota_n^\infty(\hat f) = f$, $\iota_n^\infty(\hat \gamma) = \gamma$, $\ell_G(f) = \ell_{G_n}(\hat f) = 1$, $\ell_G(\gamma) = \ell_{G_n}(\hat \gamma) = p^i+j-1$, and with $$\hat f = (\sigma-1)^{p^i+j-1}\hat \gamma.$$

Since $E_n/F$ is embeddable, however, we know the $\F_p[G_n]$-structure of $k_mE_n$ is composed of cyclic submodules of dimension $p^k$, $0 \leq k \leq n$.  Hence we have $\im\left((\sigma-1)^{p^i+j-1}\right) \cap (k_mE_n)^{G_n} = \im\left((\sigma-1)^{p^{i+1}-1}\right) \cap (k_mE_n)^{G_n}$.  Moreover, every element of $\im\left((\sigma-1)^{p^{i+1}-1}\right) \cap (k_mE_n)^{G_n}$ can be written as a sum $\hat x + \hat y$, where $\hat x \in \{a_n\}\cdot \Gamma(m,n) \subseteq  \ker \iota_n^{n+1} \subseteq  \ker \iota_n^\infty$ and $\hat y \in \iota_0^n(N^{i+1}_0(k_mE_{i+1}))$.

Hence we conclude that $\hat f = \hat x + \hat y$, and upon inclusion to $k_mE_\infty$ we have $f \in \iota_n^\infty(\iota_0^n(N^{i+1}_0(k_mE_i)))$.  Considering that the $Y_j$ are constructed so that $$\iota^\infty_0(N^j_0(k_mE_j)) = \left(\bigoplus_{k \geq j} Y_k^G \right),$$ we conclude that condition (\ref{eq:p.adic.collapsing.of.filtration}) is satisfied.
\end{proof}
}

\end{document}